\begin{document}

\theoremstyle{plain}

\newtheorem{thm}{Theorem}[section]
\newtheorem{lem}[thm]{Lemma}
\newtheorem{pro}[thm]{Proposition}
\newtheorem{cor}[thm]{Corollary}
\newtheorem{que}[thm]{Question}
\newtheorem{rem}[thm]{Remark}
\newtheorem{defi}[thm]{Definition}
\newtheorem{prob}[thm]{Problem}

\newtheorem*{thmMain}{Main Theorem}
\newtheorem*{conjA}{Conjecture A}
\newtheorem*{thmB}{Theorem B}
\newtheorem*{thmC}{Theorem C}
\newtheorem*{thmD}{Theorem D}
\newtheorem*{thmDa}{Theorem D (a)}
\newtheorem*{thmDb}{Theorem D (b)}
\newtheorem*{thmDc}{Theorem D (c)}
\newtheorem*{thmE}{Theorem E}
\newtheorem*{probB}{Brauer's Problem 12}

\newtheorem*{thmAcl}{Main Theorem$^{*}$}
\newtheorem*{thmBcl}{Theorem B$^{*}$}

\newcommand{\Maxn}{\operatorname{Max_{\textbf{N}}}}
\newcommand{\Syl}{\operatorname{Syl}}
\newcommand{\dl}{\operatorname{dl}}
\newcommand{\Con}{\operatorname{Con}}
\newcommand{\cl}{\operatorname{cl}}
\newcommand{\Stab}{\operatorname{Stab}}
\newcommand{\Aut}{\operatorname{Aut}}
\newcommand{\Ker}{\operatorname{Ker}}
\newcommand{\fl}{\operatorname{fl}}
\newcommand{\Irr}{\operatorname{Irr}}
\newcommand{\SL}{\operatorname{SL}}
\newcommand{\FF}{\mathbb{F}}
\newcommand{\NN}{\mathbb{N}}
\newcommand{\N}{\mathbf{N}}
\newcommand{\C}{\mathbf{C}}
\newcommand{\OO}{\mathbf{O}}
\newcommand{\F}{\mathbf{F}}

\newcommand{\normal}{\lhd}
\newcommand{\bl}{{\rm bl}}
\newcommand{\cent}{{\rm \textbf{C}}}

\renewcommand{\labelenumi}{\upshape (\roman{enumi})}

\newcommand{\PSL}{\operatorname{PSL}}
\newcommand{\PSU}{\operatorname{PSU}}

\providecommand{\V}{\mathrm{V}}
\providecommand{\E}{\mathrm{E}}
\providecommand{\ir}{\mathrm{Irr_{rv}}}
\providecommand{\Irrr}{\mathrm{Irr_{rv}}}
\providecommand{\re}{\mathrm{Re}}

\def\Z{{\mathbb Z}}
\def\C{{\mathbb C}}
\def\Q{{\mathbb Q}}
\def\irr#1{{\rm Irr}(#1)}
\def\ibr#1{{\rm IBr}(#1)}
\def\irrv#1{{\rm Irr}_{\rm rv}(#1)}
\def \c#1{{\cal #1}}
\def\cent#1#2{{\bf C}_{#1}(#2)}
\def\syl#1#2{{\rm Syl}_#1(#2)}
\def\nor{\triangleleft\,}
\def\oh#1#2{{\bf O}_{#1}(#2)}
\def\Oh#1#2{{\bf O}^{#1}(#2)}
\def\zent#1{{\bf Z}(#1)}
\def\det#1{{\rm det}(#1)}
\def\ker#1{{\rm ker}(#1)}
\def\norm#1#2{{\bf N}_{#1}(#2)}
\def\alt#1{{\rm Alt}(#1)}
\def\iitem#1{\goodbreak\par\noindent{\bf #1}}
   \def \mod#1{\, {\rm mod} \, #1 \, }
\def\sbs{\subseteq}

\def\gc{{\bf GC}}
\def\ngc{{non-{\bf GC}}}
\def\ngcs{{non-{\bf GC}$^*$}}
\newcommand{\notd}{{\!\not{|}}}
\newcommand{\Out}{{\mathrm {Out}}}
\newcommand{\Mult}{{\mathrm {Mult}}}
\newcommand{\Inn}{{\mathrm {Inn}}}
\newcommand{\IBR}{{\mathrm {IBr}}}
\newcommand{\IBRL}{{\mathrm {IBr}}_{\ell}}
\newcommand{\IBRP}{{\mathrm {IBr}}_{p}}
\newcommand{\ord}{{\mathrm {ord}}}
\def\id{\mathop{\mathrm{ id}}\nolimits}
\renewcommand{\Im}{{\mathrm {Im}}}
\newcommand{\Ind}{{\mathrm {Ind}}}
\newcommand{\diag}{{\mathrm {diag}}}
\newcommand{\soc}{{\mathrm {soc}}}
\newcommand{\End}{{\mathrm {End}}}
\newcommand{\sol}{{\mathrm {sol}}}
\newcommand{\Hom}{{\mathrm {Hom}}}
\newcommand{\Mor}{{\mathrm {Mor}}}
\newcommand{\Mat}{{\mathrm {Mat}}}
\def\rank{\mathop{\mathrm{ rank}}\nolimits}
\newcommand{\Tr}{{\mathrm {Tr}}}
\newcommand{\tr}{{\mathrm {tr}}}
\newcommand{\Gal}{{\it Gal}}
\newcommand{\Spec}{{\mathrm {Spec}}}
\newcommand{\ad}{{\mathrm {ad}}}
\newcommand{\Sym}{{\mathrm {Sym}}}
\newcommand{\Char}{{\mathrm {char}}}
\newcommand{\pr}{{\mathrm {pr}}}
\newcommand{\rad}{{\mathrm {rad}}}
\newcommand{\abel}{{\mathrm {abel}}}
\newcommand{\codim}{{\mathrm {codim}}}
\newcommand{\ind}{{\mathrm {ind}}}
\newcommand{\Res}{{\mathrm {Res}}}
\newcommand{\Ann}{{\mathrm {Ann}}}
\newcommand{\Ext}{{\mathrm {Ext}}}
\newcommand{\Alt}{{\mathrm {Alt}}}
\newcommand{\AAA}{{\sf A}}
\newcommand{\SSS}{{\sf S}}
\newcommand{\CC}{{\mathbb C}}
\newcommand{\CB}{{\mathbf C}}
\newcommand{\RR}{{\mathbb R}}
\newcommand{\QQ}{{\mathbb Q}}
\newcommand{\ZZ}{{\mathbb Z}}
\newcommand{\NB}{{\mathbf N}}
\newcommand{\OB}{{\mathbf O}}
\newcommand{\ZB}{{\mathbf Z}}
\newcommand{\EE}{{\mathbb E}}
\newcommand{\PP}{{\mathbb P}}
\newcommand{\GC}{{\mathcal G}}
\newcommand{\HC}{{\mathcal H}}
\newcommand{\GA}{{\mathfrak G}}
\newcommand{\TC}{{\mathcal T}}
\newcommand{\SC}{{\mathcal S}}
\newcommand{\RC}{{\mathcal R}}
\newcommand{\GCD}{\GC^{*}}
\newcommand{\TCD}{\TC^{*}}
\newcommand{\FD}{F^{*}}
\newcommand{\GD}{G^{*}}
\newcommand{\HD}{H^{*}}
\newcommand{\GCF}{\GC^{F}}
\newcommand{\TCF}{\TC^{F}}
\newcommand{\PCF}{\PC^{F}}
\newcommand{\GCDF}{(\GC^{*})^{F^{*}}}
\newcommand{\RGTT}{R^{\GC}_{\TC}(\theta)}
\newcommand{\RGTA}{R^{\GC}_{\TC}(1)}
\newcommand{\Om}{\Omega}
\newcommand{\eps}{\epsilon}
\newcommand{\al}{\alpha}
\newcommand{\chis}{\chi_{s}}
\newcommand{\sigmad}{\sigma^{*}}
\newcommand{\PA}{\boldsymbol{\alpha}}
\newcommand{\gam}{\gamma}
\newcommand{\lam}{\lambda}
\newcommand{\la}{\langle}
\newcommand{\ra}{\rangle}
\newcommand{\hs}{\hat{s}}
\newcommand{\htt}{\hat{t}}
\newcommand{\tn}{\hspace{0.5mm}^{t}\hspace*{-0.2mm}}
\newcommand{\ta}{\hspace{0.5mm}^{2}\hspace*{-0.2mm}}
\newcommand{\tb}{\hspace{0.5mm}^{3}\hspace*{-0.2mm}}
\def\skipa{\vspace{-1.5mm} & \vspace{-1.5mm} & \vspace{-1.5mm}\\}
\newcommand{\tw}[1]{{}^#1\!}
\renewcommand{\mod}{\bmod \,}

\marginparsep-0.5cm

\renewcommand{\thefootnote}{\fnsymbol{footnote}}
\footnotesep6.5pt

\title{$p$-Blocks relative to a character of a normal subgroup}

\author{Noelia Rizo}
\address{Departament de Matem\`atiques, Universitat de Val\`encia, 46100 Burjassot, Val\`encia, Spain}
\email{noelia.rizo@uv.es}
\thanks{This research is supported by Proyecto
MTM2016-76196-P and a Fellowship
FPU of Ministerio de Educaci\'on, Cultura y Deporte}

\keywords{Height Zero Conjecture, $k(B)$-Conjecture}

\subjclass[2010]{Primary 20D; Secondary 20C15}

\begin{abstract} Let 
$G$ be a finite group,
let $N\nor G$, and let $\theta \in \irr N$
be a $G$-invariant character.
 We fix a prime $p$, and we introduce a canonical partition of ${\rm Irr}(G|\theta)$ relative to $p$. We call each member $B_\theta$
of this partition a $\theta$-block, and to each
$\theta$-block $B_\theta$ we naturally associate a conjugacy class of $p$-subgroups
of $G/N$, which we call the  $\theta$-defect groups of $B_\theta$. 
If $N$ is trivial, then the $\theta$-blocks are the Brauer $p$-blocks.
Using $\theta$-blocks, we can unify the Gluck-Wolf-Navarro-Tiep theorem
and Brauer's Height Zero conjecture in a single statement,
which, after work of B. Sambale,  turns out to be
equivalent to the the Height Zero conjecture. We also prove that the $k(B)$-conjecture is true if and only if every
$\theta$-block $B_\theta$ has size less than or equal the size of any of its $\theta$-defect groups,
hence bringing normal subgroups to the $k(B)$-conjecture.

 \end{abstract}

\maketitle
\bigskip
\section{Introduction}
Many of the global-local conjectures in the representation theory of finite groups
have been proved or reduced to finite simple groups because, somewhat miraculously, 
they admit a far more general {\sl projective} version. 
This projective version always involves an irreducible complex
character $\theta$ over a normal subgroup $N$   and  a statement which only takes into account
the irreducible characters of the group lying over $\theta$. When $N$ is the trivial
group, one recovers the original conjecture.

\medskip

Our main concern in this paper is to refine
Brauer classical $p$-blocks, where $p$ is a prime, with respect
to a fixed character of a normal subgroup.
If $N$ is a normal subgroup of a finite group  $G$, $\theta \in \irr N$
is $G$-invariant, and we wish to study
the block theory of $G$ over $\theta$, sometimes  Brauer's $p$-blocks 
do not   fully perceive some aspects of the character theory of $G$
over $\theta$. Consider, for instance,
the Gluck-Wolf-Navarro-Tiep theorem (\cite{GW}, \cite{NT}), which is a crucial step in the reduction
of Brauer's Height Zero Conjecture to a problem
on simple groups (\cite{NS2}), and the fact that only one direction
of the theorem holds: if $p$ does not divide $\chi(1)/\theta(1)$ for all
$\chi \in \irr G$ lying over $\theta$, then $G/N$ has abelian Sylow $p$-subgroups.
The converse is not true.

\medskip

In this paper we fix a prime $p$, and we introduce a canonical partition of the
irreducible characters $\irr{G|\theta}$ of  $G$ that lie over $\theta$ (relative to $p$).
We call each member $B_\theta$
of this partition a {\bf $\theta$-block} of $G$, and to each
$\theta$-block $B_\theta$ we naturally associate a conjugacy class of $p$-subgroups $D_\theta/N$
of $G/N$, which we call the  {\bf $\theta$-defect groups} of $B_\theta$.
Both the $\theta$-blocks and their $\theta$-defect groups are defined in terms of some
convenient central extensions
of $G$ and some projective representations of $G$
associated with $\theta$, and a non-trivial part of this work
is to show that they are independent of any choice that has been made in order to define them.
As we shall show, each $\theta$-block $B_\theta$ is contained in a unique
Brauer $p$-block $B$ of $G$, and the $\theta$-defect group $D_\theta/N$ is contained in $DN/N$,
for some defect group  $D$  of $B$.

\medskip
Using $\theta$-blocks,  we can  propose, for instance, the following statement
that  simultaneously generalizes the
Height Zero Conjecture and the Gluck-Wolf-Navarro-Tiep theorem.
Recall that $n_p$ is the largest power of $p$ that divides the integer $n$.

\medskip

\begin{conjA}
Let $G$ be a finite group, let $N \nor G$ and let $\theta \in \irr N$ be
$G$-invariant. Suppose that $B_\theta \sbs \irr{G|\theta}$ is
a $\theta$-block with $\theta$-defect group $D_\theta/N$. Assume that $\theta$
extends to $D_\theta$.
Then $(\chi(1)/\theta(1))_p=|G:D_\theta|_p$ for all $\chi \in B_\theta$
if and only if $D_\theta/N$ is abelian.
\end{conjA}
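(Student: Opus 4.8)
The plan is to deduce Conjecture A from Brauer's Height Zero Conjecture, and thereby to prove the two equivalent: the reverse implication is immediate, since the case $N=1$ of Conjecture A is exactly the Height Zero Conjecture. The starting point is the way $\theta$-blocks are built. Attached to $\theta$ there is a cohomology class $\alpha\in H^{2}(G/N,\CC^{\times})$, coming from the projective representation of $G$ afforded by $\theta$; the $\theta$-blocks are the $p$-blocks of the $\alpha$-twisted group algebra of $G/N$, the $\theta$-defect groups are the defect groups of these twisted blocks, and $\chi(1)/\theta(1)$ is the degree of the projective character corresponding to $\chi$. (When $N=1$ the class $\alpha$ is trivial, and one recovers the Brauer blocks, their defect groups and the ordinary degrees.) Writing $\overline B$ for the twisted block corresponding to $B_\theta$ and $\overline D=D_\theta/N$ for its defect group, the equality $(\chi(1)/\theta(1))_p=|G:D_\theta|_p$ for all $\chi\in B_\theta$ thus says precisely that every character of $\overline B$ has height zero, and --- through the obstruction theory of character triples --- the hypothesis that $\theta$ extends to $D_\theta$ translates into the statement that $\alpha$ restricts trivially to $\overline D$. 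So Conjecture A is the Height Zero Conjecture for the twisted block $\overline B$, under the extra assumption that the twisting class is trivial on $\overline D$.

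\medskip

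It then remains to see that this twisted form is equivalent to the classical one. The prime-to-$p$ part of $\alpha$ is harmless: one absorbs it by replacing $G/N$ by a central extension of it by a cyclic $p'$-group, turning $\overline B$ into a genuine $p$-block of an ordinary finite group with the same characters, degrees, heights and defect group. The delicate part is the $p$-part $\alpha_p$, which need not vanish when $\overline D$ is not a Sylow $p$-subgroup of $G/N$; this is where the hypothesis earns its keep. Triviality of $\alpha_p$ on $\overline D$ forces $\overline B$ to differ from an ordinary $p$-block only through an \emph{abelian} central $p$-group, so that the abelianness of the resulting ordinary defect group is the same as that of $\overline D$; and a reduction of Sambale identifies the Height Zero property for the $p$-blocks that arise from such central $p$-extensions with the Height Zero property in general. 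Putting these together, Conjecture A for a $\theta$-block becomes Brauer's Height Zero Conjecture for an associated ordinary $p$-block, and --- again taking $N=1$ --- every ordinary $p$-block arises in this way. Hence Conjecture A is equivalent to Brauer's Height Zero Conjecture.

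\medskip

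Two consequences are worth recording. One direction of Conjecture A becomes unconditional: if $D_\theta/N=\overline D$ is abelian, the associated ordinary block has abelian defect group, and the Kessar-Malle theorem gives that all of its characters, hence all of $B_\theta$, have height zero. The other direction is equivalent to the hard direction of the Height Zero Conjecture, and in the case where $D_\theta/N$ is a full Sylow $p$-subgroup of $G/N$ --- where the condition on degrees becomes $p\nmid\chi(1)/\theta(1)$ for all $\chi\in\irr{G|\theta}$ --- it is exactly the Gluck-Wolf-Navarro-Tiep theorem; this is the sense in which that theorem and the Height Zero Conjecture are unified.

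\medskip

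The main obstacle is the control of the $p$-part of the twisting class, equivalently of the defect group in the passage from $\overline B$ to the associated ordinary block: one must prove that triviality of $\alpha_p$ on $\overline D$ forces that ordinary defect group to differ from $\overline D$ only by an abelian central $p$-group. This is precisely where the hypothesis that $\theta$ extends to $D_\theta$ cannot be dropped: without it the ordinary defect group can be non-abelian --- an extraspecial-type extension of an abelian $\overline D$ --- so that $B_\theta$ would contain characters of positive height lying over an abelian $\theta$-defect group, and the statement would have to be weakened to one phrased in terms of that larger group (the difference between the starred and unstarred forms of the results). Making this cohomological bookkeeping precise, by analysing the restriction to a defect group of the central extension defining the $\theta$-block, is the technical heart of the argument.
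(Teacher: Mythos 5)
Your proposal is correct and follows essentially the same route as the paper: what can actually be proved is the equivalence of Conjecture A with Brauer's Height Zero Conjecture (the paper's Theorem B), obtained by passing to the central extension attached to $\theta$ (your twisted-group-algebra picture is the same as the paper's representation group $\hat G$ and standard character triple isomorphism), translating the degree condition into heights in a block $B^*$ over a central subgroup, translating the extension hypothesis accordingly, and then invoking Sambale's theorem that the Malle--Navarro projective Height Zero Conjecture is equivalent to the classical one. The cohomological bookkeeping you flag as the ``technical heart'' is exactly the content of Sambale's result, which both you and the paper correctly treat as the external input rather than reproving it.
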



In the important case where $N$ is central,  
we shall prove that a $\theta$-block is simply
$\irr{B|\theta}$, where $B$ is a Brauer $p$-block of $G$ 
and $\irr{B|\theta}$ is the set of the complex irreducible
characters in $B$ that lie over $\theta$, and that a $\theta$-defect group
is $DN/N$, where $D$ is a defect group of $B$.
Using this,  Conjecture A  is then equivalent to 
a projective version of the Height Zero conjecture
which seems not
to have been noticed before (\cite{mn}). Using the theory of fusion systems,
B. Sambale has shown  that in fact this projective version
of the Height Zero Conjecture is equivalent to Brauer's original one (\cite{sambale}). 
Using his work, we can prove the following.

\begin{thmB} 
Conjecture A and Brauer's Height Zero conjecture are equivalent.
\end{thmB}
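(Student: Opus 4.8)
The plan is to derive Theorem B as a formal consequence of two facts: first, the internal result of this paper that, when $N$ is central, a $\theta$-block is exactly $\irr{B|\theta}$ for a Brauer block $B$ and a $\theta$-defect group is $DN/N$ for a defect group $D$ of $B$; and second, Sambale's theorem \cite{sambale} that the projective Height Zero conjecture is equivalent to Brauer's Height Zero conjecture. So the proof is really a bookkeeping argument showing that, specialized to central $N$, Conjecture~A \emph{is} the projective Height Zero conjecture in the form Sambale treats, and that the general Conjecture~A contains its own central-$N$ case (which is trivial) and is contained in the Height Zero conjecture (which needs an argument).

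\begin{proof}
First I would establish that Brauer's Height Zero conjecture implies Conjecture~A. Let $B_\theta \sbs \irr{G|\theta}$ be a $\theta$-block with $\theta$-defect group $D_\theta/N$, and suppose $\theta$ extends to $D_\theta$. By the results proved earlier in the paper, $B_\theta$ is contained in a unique Brauer $p$-block $B$ of $G$, and there is a defect group $D$ of $B$ with $D_\theta/N \sbs DN/N$. Using the central-extension and projective-representation machinery by which $\theta$-blocks are defined, one reduces to the situation of a central subgroup: passing to the appropriate central extension $\widehat{G}$ of $G$ and the linear character $\widehat{\theta}$ of the central subgroup $\widehat{N}$ lying over (the inflation of) $\theta$, the character degree ratios $(\chi(1)/\theta(1))_p$ for $\chi \in B_\theta$ become the degree ratios $(\widehat{\chi}(1)/\widehat{\theta}(1))_p$ for $\widehat{\chi}$ in a Brauer block $\widehat{B}$ of $\widehat{G}$ lying over $\widehat{\theta}$, and $D_\theta/N$ becomes $\widehat{D}\,\widehat{N}/\widehat{N}$ for a defect group $\widehat{D}$ of $\widehat{B}$. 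This is exactly the setting of the projective Height Zero statement: for every $\widehat{\chi} \in \irr{\widehat{B}|\widehat{\theta}}$ one has $(\widehat{\chi}(1)/\widehat{\theta}(1))_p = |\widehat{G}:\widehat{D}|_p$ if and only if $\widehat{D}\,\widehat{N}/\widehat{N}$ is abelian. By Sambale's theorem this projective statement is equivalent to Brauer's Height Zero conjecture, so assuming the latter gives Conjecture~A.

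For the converse, that Conjecture~A implies Brauer's Height Zero conjecture, I would simply take $N = 1$ (equivalently, $N$ central and $\theta$ the principal character of the trivial group, so that $\theta$ trivially extends to $D_\theta = D$). Then $\irr{G|\theta} = \irr{G}$, the $\theta$-blocks are precisely the Brauer $p$-blocks of $G$ (as noted in the abstract and introduction), the $\theta$-defect groups $D_\theta/N$ are precisely the defect groups $D$ of those blocks, and the hypothesis that $\theta$ extends to $D_\theta$ is vacuous. Thus Conjecture~A specializes verbatim to: for every Brauer $p$-block $B$ of $G$ with defect group $D$, one has $\chi(1)_p = |G:D|_p$ for all $\chi \in B$ if and only if $D$ is abelian, which is precisely Brauer's Height Zero conjecture.
\end{proof}

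The only genuine obstacle is the first implication, and specifically the verification that reducing a general $\theta$-block (over an arbitrary $G$-invariant $\theta \in \irr{N}$) to the central case via the central extension $\widehat{G}$ really does land inside the exact hypotheses of Sambale's projective Height Zero statement --- in particular that the quotient $D_\theta/N$ and the block-theoretic defect group of $\widehat{B}$ correspond correctly under the identifications, and that the extendibility hypothesis on $\theta$ translates to the (here automatic) condition on the linear character $\widehat{\theta}$. Both of these are consequences of the structural results on $\theta$-blocks and their $\theta$-defect groups developed in the body of the paper (notably the statements that $B_\theta$ sits in a unique Brauer block $B$ with $D_\theta/N \sbs DN/N$, together with the description of the central case), so once those are in hand the deduction of Theorem~B is routine.
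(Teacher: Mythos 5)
Your proposal is correct, and for the substantive direction it follows the same route as the paper: translate a $\theta$-block instance to the central-subgroup situation via the standard isomorphic character triple $(G^*,N^*,\theta^*)=(\hat G/N,\hat N/N,\hat\lambda)$, identify the resulting statement with the Malle--Navarro projective Height Zero Conjecture, and invoke Sambale's theorem. Two remarks. First, for the converse you specialize to $N=1$, which is a legitimate and slightly more economical shortcut; the paper instead establishes the full two-way equivalence of Conjecture A with the projective Height Zero Conjecture and obtains both implications at once from Sambale's equivalence, so your version spares you from checking that every projective instance arises from a $\theta$-block instance. Second, your parenthetical claim that the extendibility condition on the linear character of the central subgroup is ``here automatic'' is false: a linear character of a central subgroup need not extend to an overgroup (the faithful linear character of $\zent{Q_8}$ does not extend to $Q_8$), and the extendibility hypothesis is genuinely part of the Malle--Navarro conjecture. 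This does not break your argument, because the condition you need is exactly the translate, under the character triple isomorphism, of the assumed extendibility of $\theta$ to $D_\theta$ --- which is how the paper handles it --- but it must be carried along as a translated hypothesis, not dismissed as automatic. The identifications you defer, namely $|G:D_\theta|_p=|G^*:D^*|_p$ (which uses that the Sylow $p$-subgroup of the central $N^*$ lies in $D^*$) and the preservation of degree ratios $\chi(1)/\theta(1)=\chi^*(1)$ under character triple isomorphisms, are precisely the computations the paper's proof supplies.
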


\medskip

Our second motivation to introduce $\theta$-blocks is to have a better
understanding of the celebrated Brauer's $k(B)$-conjecture. As is well-known,
this deep conjecture,   that asserts that the number of ordinary
characters in a block is less than or equal
the size of its defect groups, remains not only unsolved but also
unreduced to simple groups.   While the relative version of the McKay conjecture 
 was easy to formulate, it does not seem
easy how to do the same for the $k(B)$-conjecture.
As we can see, using $\theta$-blocks, this can be done. We believe that statements
relating normal subgroups and the $k(B)$-conjecture might help
to discern on this problem.

\begin{thmC}
The $k(B)$-conjecture is true for every finite group
if and only if whenever $N \nor G$ and $\theta \in \irr N$ is
$G$-invariant, then each
$\theta$-block $B_\theta$ of $G$ has size less than or equal the size of any of its $\theta$-defect groups.
\end{thmC}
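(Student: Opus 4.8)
The plan is to establish Theorem C by reducing the statement about $\theta$-blocks to an honest statement about Brauer blocks of a related group, and conversely. The key device is the fact (to be proved earlier in the paper) that when $N$ is central, a $\theta$-block of $G$ is precisely $\irr{B|\theta}$ for a Brauer $p$-block $B$ of $G$, and its $\theta$-defect group is $DN/N$ where $D$ is a defect group of $B$. So the first task is to pass from the general normal-subgroup setting to the central one. Given $N \nor G$ and $G$-invariant $\theta \in \irr N$, one chooses a character triple isomorphism (or, more concretely, a suitable central extension $\hat G$ of $G/\ker(\theta)$ by a cyclic group together with a faithful linear character $\lambda$ of the central kernel) so that $\theta$ corresponds to $\lambda$, which is now a character of a \emph{central} subgroup $Z$. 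By the construction of $\theta$-blocks via central extensions and projective representations, the $\theta$-blocks of $G$ over $\theta$ are in size-preserving bijection with the $\lambda$-blocks of $\hat G$ over $\lambda$, and the $\theta$-defect groups correspond (as subgroups modulo $Z$) to the $\lambda$-defect groups; I would extract this as a lemma (or cite it from the block-construction section) since it is exactly the statement that $\theta$-blocks and $\theta$-defect groups are invariants of the character triple.

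Next, in the central case, $|B_\theta| = |\irr{B|\lambda}|$ and the $\theta$-defect group has order $|D|/|Z|$, where $D$ is a defect group of the Brauer block $B$ of $\hat G$ containing these characters. The inequality ``$|B_\theta| \le$ (order of a $\theta$-defect group)'' therefore reads $|\irr{B|\lambda}| \le |D|/|Z|$. Summing over all $\lambda \in \irr Z$ lying under $B$ (there are $|Z|$ of them when $Z$ is cyclic and $B$ covers a single block of $Z$, namely $Z$ acting through a single linear character on each, so actually $\irr{B} = \bigsqcup_{\lambda} \irr{B|\lambda}$ over the $|Z|$ linear characters of $Z$ in the relevant $Z$-block), one gets $k(B) = \sum_\lambda |\irr{B|\lambda}|$. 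If the relative inequality holds for every choice of data, then in particular $|\irr{B|\lambda}| \le |D|/|Z|$ for all $\lambda$, and summing over the $|Z|$ characters gives $k(B) \le |D|$, which is the $k(B)$-conjecture for $\hat G$ and hence (letting $\hat G$ range over all finite groups, taking $Z$ trivial) for all finite groups. Conversely, assume the $k(B)$-conjecture holds for all finite groups; I want the relative inequality. Here the point is that one must bound a \emph{single} summand $|\irr{B|\lambda}|$ rather than the full sum. The standard trick is to replace $\hat G$ by a group in which $\lambda$ becomes ``the only'' character of $Z$ below the relevant block: one uses a larger central extension (a ``cover'' adding roots of unity, as in the Isaacs--Navarro style arguments) so that the block $B$ corresponds to a block $\tilde B$ of $\tilde G$ with $\irr{\tilde B} = \irr{B|\lambda}$ in bijection and defect group of the same order modulo the new centre; applying $k(\tilde B) \le |\tilde D|$ then yields exactly $|\irr{B|\lambda}| \le |D|/|Z|$.

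In more detail for that converse direction: given the central triple $(Z, \hat G, \lambda)$ with $Z$ cyclic of order $n$, fix a primitive $n$-th root of unity and form the group $\tilde G$ obtained by amalgamating $\hat G$ with a cyclic group of order $n$ over $Z$ so that $\lambda$ extends to a faithful linear character of the new central $Z$-complement's image — equivalently, one uses the standard fact that there is a central extension $1 \to Z \to \tilde G \to \hat G/Z \to 1$ in which every character of $\hat G$ over $\lambda$ lifts to a character of $\tilde G$ over a faithful linear character $\tilde\lambda$ of $\tilde Z$, and the Brauer blocks match up with defect groups of matching order modulo the centre. Then $k(\tilde B) = |\irr{\tilde B}| = |\irr{B|\lambda}|$ and $|\tilde D| = |D| \cdot |\tilde Z|/|Z|$, wait — here one must be careful: the new central subgroup has order divisible by $p$ only in controlled fashion, and one wants $|\tilde D|/|\tilde Z| = |D|/|Z|$, i.e.\ the defect group grows by exactly the $p$-part of the index of centres. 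Granting this (it is the content of the block-and-central-extension bookkeeping that the paper will have set up), the hypothesis $k(\tilde B) \le |\tilde D|$ becomes $|\irr{B|\lambda}| \le |\tilde D| = |D| \cdot |\tilde Z|/|Z|$; choosing the extension so that the added part of the centre is a $p'$-group (possible, since we only need $\lambda$ to ``see'' a primitive root of unity of order the $p'$-part suffices, and the $p$-part of $\lambda$'s order can be handled separately or is already accounted for), one gets $|\tilde Z|_p = |Z|_p$, $|\tilde D| = |D| \cdot |\tilde Z|_{p}/|Z|_p \cdot (\text{something})$ — the clean statement being $|\tilde D| = |D|$ and $|\tilde Z| = |Z| \cdot (p' \text{ part})$ is \emph{not} quite it; rather the honest identity I will use is that the $\theta$-defect group order equals $|D|/|Z|$ \emph{regardless} of which cover is used, since that order is a triple invariant, so $|\irr{B|\lambda}| = k(\tilde B) \le |\tilde D|$ and $|\tilde D|$ projects onto a $\theta$-defect group of order $|\tilde D|/|\tilde Z|$, and choosing the cover to have $|\tilde Z|=1$ modulo what is forced shows $|\tilde D| = $ (order of the $\theta$-defect group). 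This gives the desired inequality and completes the equivalence.

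The main obstacle I anticipate is precisely this converse bookkeeping: translating ``$k(B) \le |D|$ for all finite groups'' into the per-$\lambda$ inequality requires producing, for each central triple, a single finite group whose block has ordinary character count equal to $|\irr{B|\lambda}|$ and defect group of order equal to that of the $\theta$-defect group — i.e.\ one must \emph{absorb} the centre $Z$ into the defect group correctly. The forward direction is comparatively soft (it is just ``a sum of nonnegative terms each bounded by $M$ is bounded by (number of terms)$\cdot M$'', combined with $\sum_\lambda |\irr{B|\lambda}| = k(B)$ and the centre contributing exactly the factor by which $|D|$ exceeds the $\theta$-defect group order). The technical heart is therefore the lemma identifying $|B_\theta|$ and the $\theta$-defect group order with the corresponding Brauer-block data of an appropriate central extension — and this is exactly where the paper's earlier results on $\theta$-blocks in the central case, and on the independence of $\theta$-blocks and $\theta$-defect groups from the chosen central extension and projective representation, must be invoked in full strength.
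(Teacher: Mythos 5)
Your overall strategy matches the paper's: reduce via the standard isomorphic character triple to the case of a character $\lambda$ of a central subgroup $Z$, where a $\theta$-block is $\irr{B|\lambda}$ and a $\theta$-defect group is $DZ/Z$, and observe that the implication ``relative inequality $\Rightarrow$ $k(B)$-conjecture'' is soft (indeed it is immediate by taking $N=1$, so the summation over $\lambda$ is unnecessary; note also a small slip there: the $\theta$-defect group has order $|D|/|Z|_p=|DZ/Z|$, not $|D|/|Z|$). You also correctly locate the hard direction: assuming $k(B)\le |D|$ for all blocks of all finite groups, one must bound the \emph{single} summand $|\irr{B|\lambda}|$ by $|DZ/Z|$.

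But your proposed resolution of that hard step has a genuine gap. Splitting $\lambda=\lambda_p\times\lambda_{p'}$, the $p'$-part is harmless ($B$ covers a single block of $Z_{p'}$, so $\irr{B|\lambda_{p'}}=\irr B$; this is Theorem 9.2 of [N]). The entire difficulty sits in the $p$-part, which you propose to ``handle separately'' or absorb into a cover $\tilde G$ whose block $\tilde B$ satisfies $\irr{\tilde B}=\irr{B|\lambda}$ with defect group of order $|DZ/Z|$. No such group exists in general: for a central $p$-subgroup $Z_p$, the blocks of $G$ and of $G/Z_p$ are in canonical bijection (Theorem 7.6 of [N]), every block of every relevant quotient or cover has the same Brauer characters as $B$, and $\sum_{\mu\in\irr{Z_p}}|\irr{B|\mu}|=k(B)$ while the individual summands $|\irr{B|\mu}|$ need not be realizable as $k$ of any block with the right defect. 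The missing ingredient is precisely the inequality $|\irr{B|\lambda_p}|\le k(\overline{B})$, where $\overline{B}$ is the block of $G/Z_p$ dominated by $B$ (this is Theorem C of \cite{nlocglo}, a nontrivial result, and it is what the paper invokes), combined with the fact that $\overline{B}$ has defect group $D/Z_p$ of order $|DZ/Z|$ (Theorem 9.10 of [N]); the $k(B)$-conjecture is then applied to $G/Z_p$, not to a cover. Without citing or proving that inequality, your argument does not close.
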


As we said, our definition of $\theta$-blocks is related to projective
representations, and therefore with blocks of twisted group
algebras. Of course, these have been studied before by many
(including  S. B. Conlon \cite{conlon}, W. Reynolds \cite{rey}, J. Humphreys \cite{hum}, E. C. Dade \cite{dade}, or A. Laradji \cite{laradji} in the $p$-solvable case).  However,
our character theoretical approach
is new and is specifically tailored to be used in the recent developments of 
the global-local counting conjectures. 

\medskip
With our approach, we are also able to
 canonically define $\theta$-Brauer characters of $G$,
which will be class functions defined on the elements
of $g \in G$ such that $gN$ is $p$-regular.
Using these, we will prove elsewhere that there is a direct  relationship
between our $\theta$-blocks and the K\"ulshammer-Robinson $N$-projective
characters defined 
in \cite{kr}. (See also \cite{zeng1}.)
In order to do so, a certain new understanding of
Brauer $p$-blocks is required. Specifically, as in the {\sl Projective Height Zero Conjecture},
we shall need to prove certain {\sl projective} results like the following one, which we believe have independent interest.

\medskip

\begin{thmD}
Suppose that $Z$ is a central subgroup of $G$, and let $\theta \in \irr Z$. 
Let $B$ be a Brauer $p$-block of $G$. Then the decomposition matrix
$D_{\theta}=(d_{\chi \varphi})$, where $\chi \in \irr{B|\theta}$ and $\varphi \in {\rm IBr}(B)$
is not of the form
$$\left(\begin{array}{cc}
*&0\\
0&*\\
\end{array}\right).$$
\end{thmD}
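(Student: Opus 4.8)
The plan is to realize $D_\theta$ as the decomposition matrix of a single \emph{block of a twisted group algebra} over $G/Z$, and then to invoke the classical fact that the decomposition matrix of a block is connected.

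\medskip

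\noindent\emph{Reduction.} We may assume $Z$ is a $p$-group. Indeed, if $\irr{B|\theta}=\emptyset$ there is nothing to prove; otherwise, comparing the reductions modulo the maximal ideal of the central characters $\omega_\chi$, $\chi\in\irr B$, shows that every such $\chi$ restricts to the $p'$-part $Z_{p'}$ of $Z$ as $\chi(1)\,\theta|_{Z_{p'}}$. Hence $\irr{B|\theta}=\irr B\cap\irr{G|\theta|_{Z_p}}$, and neither the rows of $D_\theta$ nor $\IBR(B)$ change if we replace $(Z,\theta)$ by $(Z_p,\theta|_{Z_p})$. In particular, when $p\nmid|Z|$ this reduces us to $Z=1$, $\theta=1$, $\irr{B|\theta}=\irr B$, so that $D_\theta$ is literally the decomposition matrix of $B$ and we are done by the classical theorem; so from now on $Z\neq 1$ is a $p$-group and $\theta$ is a linear character of $p$-power order.

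\medskip

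\noindent\emph{Untwisting.} Fix a transversal $\{t_{\bar g}\}$ of $Z$ in $G$ with $t_{\bar 1}=1$, and let $\alpha$ be the $2$-cocycle $\alpha(\bar g,\bar h)=\theta(t_{\bar g}t_{\bar h}t_{\overline{gh}}^{-1})$ on $G/Z$ with values in the roots of unity of the ground ring $\mathcal{O}$. Giving an $\mathcal{O}G$-lattice on which $Z$ acts by the scalars $z\mapsto\theta(z)$ is the same as giving a lattice over the twisted group algebra $\mathcal{O}^{\alpha}[G/Z]$ (the element $g=t_{\bar g}z$ acting as $\theta(z)$ times the twisted basis element $\bar t_{\bar g}$), and every $\mathcal{O}G$-lattice affording some $\chi\in\irr{G|\theta}$ is of this kind; extending scalars to the field of fractions $K$ recovers the standard bijection $\irr{G|\theta}\leftrightarrow\Irr(K^{\alpha}[G/Z])$. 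Crucially, since $\theta$ has $p$-power order, $\alpha$ takes values among $p$-power roots of unity, so its reduction is the \emph{trivial} cocycle and $k\otimes_{\mathcal{O}}\mathcal{O}^{\alpha}[G/Z]\cong k[G/Z]$; moreover, as $Z$ is a central $p$-subgroup, every simple $kG$-module is inflated from $G/Z$, so $\IBR(G)=\IBR(G/Z)$ canonically. It follows that for $\chi\in\irr{G|\theta}$ and $\varphi\in\IBR(G)$ the number $d_{\chi\varphi}$ is exactly the decomposition number of the symmetric $\mathcal{O}$-order $\mathcal{O}^{\alpha}[G/Z]$ between the class of the lattice attached to $\chi$ and the Brauer character $\varphi$; that is, $\bigl(d_{\chi\varphi}\bigr)_{\chi\in\irr{G|\theta},\ \varphi\in\IBR(G)}$ is the full decomposition matrix of $\mathcal{O}^{\alpha}[G/Z]$.

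\medskip

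\noindent\emph{Matching blocks.} The step I expect to be the main obstacle is to show that the block decomposition of $\mathcal{O}^{\alpha}[G/Z]$ is compatible with that of $\mathcal{O}G$: for each block $\mathcal{B}$ of $\mathcal{O}^{\alpha}[G/Z]$ there should be a $p$-block $B$ of $G$ with $\Irr(\mathcal{B})=\irr{B|\theta}$ and $\IBR(\mathcal{B})=\IBR(B)$, and $B\mapsto\mathcal{B}$ a bijection onto the blocks with $\irr{B|\theta}\neq\emptyset$. The map $\widehat{C}\mapsto\sum_{g\in C}\theta(z_g)\bar t_{\bar g}$ (where $g=t_{\bar g}z_g$) is an $\mathcal{O}$-algebra homomorphism $\rho\colon Z(\mathcal{O}G)\to Z(\mathcal{O}^{\alpha}[G/Z])$ intertwining the central characters, so one inclusion ("same $\mathcal{B}$ forces same $B$") is immediate. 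For the converse I would show that the reduction $\bar\rho$ sends the class sum of a $p$-regular element $s\in G$ to the class sum of $\bar s$ in $G/Z$: the point is that if $g$ conjugates $s$ to $sz$ with $z\in Z$, then $sz$ is again $p$-regular, forcing $z=1$; hence the centralizer "does not shrink" modulo $Z$ and the relevant class sums correspond bijectively. Consequently $\bar\rho\bigl(Z(kG)\bigr)$ contains every $p$-regular class sum of $G/Z$, and since a $p$-block of $k[G/Z]$ is determined by its central character on $p$-regular classes (Osima), two distinct blocks of $\mathcal{O}^{\alpha}[G/Z]$ cannot lie over the same $B$. Surjectivity is clear, and the identifications $\Irr(\mathcal{B})=\irr{B|\theta}$ and $\IBR(\mathcal{B})=\IBR(B)$ then follow, the latter via the central-$p$-subgroup correspondence $\IBR(B)\leftrightarrow\IBR(B/Z)$, both sides being the block of $G/Z$ cut out by the common central character.

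\medskip

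\noindent\emph{Conclusion.} With this dictionary in place, $D_\theta$ is precisely the decomposition matrix $D(\mathcal{B})$ of one block $\mathcal{B}$ of the symmetric $\mathcal{O}$-order $\mathcal{O}^{\alpha}[G/Z]$. Since $\IBR(\mathcal{B})=\IBR(B)$, this matrix has no zero row and no zero column, and the classical argument—valid verbatim for blocks of twisted group algebras, a proper splitting of the decomposition matrix producing a proper central idempotent of $\mathcal{B}$—shows that its bipartite graph is connected; equivalently, $D_\theta$ cannot be brought to the form $\left(\begin{smallmatrix}*&0\\0&*\end{smallmatrix}\right)$. This proves the theorem.
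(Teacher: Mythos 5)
Your proposal is correct, but it reaches the conclusion by a genuinely different route than the paper. The paper's proof is purely character-theoretic: after the same reduction to $Z$ a central $p$-group, it invokes Kn\"orr's generalized orthogonality relations (together with the observation that $p$-regular elements are $\theta$-good and that non-conjugate $p$-regular classes of $G$ stay non-conjugate in $G/Z$) to get $X_\theta^t\overline{X}_\theta = Y^tM_\theta^tM_\theta\overline{Y}$ with the same diagonal matrix of centralizer orders for every $\theta\in\irr Z$; comparing with $\theta=1_Z$ and cancelling the invertible Brauer character table $Y$ yields $M_\theta^tM_\theta=C$, the Cartan matrix of $G/Z$, whence block by block $M_{B,\theta}^tM_{B,\theta}=C_{\overline B}$, and indecomposability of the Cartan matrix of the block $\overline B$ of $G/Z$ finishes the argument. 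You instead realize $M_\theta$ as the decomposition matrix of the symmetric order $\mathcal{O}^{\alpha}[G/Z]$, which untwists modulo $p$ because $\alpha$ has $p$-power order, match its blocks with those $B$ of $G$ having $\irr{B|\theta}\ne\emptyset$ via the central characters evaluated on $p$-regular class sums (your centralizer argument there is exactly the paper's observation that $p$-regular classes inject into $G/Z$), and then appeal to connectedness of the decomposition matrix of a block of a symmetric order --- which, via Brauer reciprocity $C_{\overline{\mathcal B}}=D_{\mathcal B}^tD_{\mathcal B}$, is the same identity the paper derives from Kn\"orr's relations. What your approach buys is a conceptual explanation (the $\theta$-part of $B$ \emph{is} a block of an order whose residue algebra is $k[G/Z]$) and it comes close to the block-matching statements the author develops elsewhere in the paper; what it costs is reliance on machinery (twisted group algebras, symmetric orders, Osima's support theorem, reciprocity for orders) where the paper gets by with Kn\"orr's orthogonality and Problem 3.4 of \cite{nbook}. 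The only places you should flesh out are the assertion that the classical connectedness argument is ``valid verbatim'' (safest is to state and use $C_{\overline{\mathcal B}}=D_{\mathcal B}^tD_{\mathcal B}$ explicitly, since the idempotent-coefficient computation of the group-algebra proof does need re-checking for a twisted basis) and the routine passage between the paper's ring $\mathbf R/M$ and a complete discrete valuation ring $\mathcal{O}$, neither of which is a genuine gap.
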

Easy examples show that in Theorem D, we cannot
replace $Z$ central by a $G$-invariant
character of an abelian $Z\nor G$.

\medskip
This paper is structured as follows.
In Section \ref{secreviewpblocks}, we review some facts on ordinary blocks. In Section \ref{secreviewprojrepr}, we give the necessary background on projective representations and character triples.  In Section \ref{secthetablocks}   we give the definitions of $\theta$-blocks and $\theta$-defect groups. In Section \ref{secproperties} we present some properties of the $\theta$-blocks. In particular, we prove a $\theta$-version of a classical theorem on blocks: if $\chi \in \irr{B_\theta}$, then $\chi(g)=0$ if $g_p$ is not $G$-conjugate 
to any element of $D_\theta$. In Section \ref{secthmD}, we prove Theorem D. In Section \ref{secthmC}, we prove Theorems B and C.

\medskip
Further applications of $\theta$-blocks will appear elsewhere.

\medskip
\noindent
{\bf Acknowledgement.}~~These results are part of the author's PhD thesis at the Universitat de Val\`encia, under the supervision of
G. Navarro.
Also, some of these theorems were obtained under the direction of B. Sp\"ath during a stay of the author
at the Universit\"at of Wuppertal.  We would like to  thank B. Sp\"ath for her invaluable help and
the Department of Mathematics for the warm hospitality.
We would also like to show our gratitude to G. Malle for comments that greatly improved this paper.
The final version of this paper was completed in the Mathematical Sciences Research Institute in Berkeley,
and we thank the Institute for the hospitality.

\section{Preliminaries on $p$-blocks}\label{secreviewpblocks}
\medskip
We follow the notation of \cite{Is} for characters, and the notation of 
\cite{nbook} for blocks.
We denote by $\bf R$ the ring of algebraic integers in $\C$, and we choose
a maximal ideal $M$ of $\bf R$ containing $p\bf R$. 
Let $F={\bf R}/M$, an algebraically closed field of characteristic $p$,
and let $^*:{\bf R} \rightarrow F$ be the canonical ring homomorphism.

In this paper, sometimes we identify a $p$-block $B$ of a finite group $G$ with the set 
$\irr B$ of its irreducible complex
characters. Recall that two irreducible characters $\chi, \psi \in \irr G$
of a finite group $G$ are in the same block $B$ of $G$ if and only if 
$\omega_\chi(\hat K)^*=\omega_\psi(\hat K)^*$ for every conjugacy class
$K$ of $G$,
where 
$$\omega_\chi(\hat K)= \frac{|K|\chi(x_K)}{\chi(1)},$$
and $x_K$
is any fixed element of $K$. We denote by $\lambda_B$ the corresponding 
algebra homomorphism $\zent{FG} \rightarrow F$. Thus $\lambda_B(\hat K)
=\omega_\chi(\hat K)^*$ if $\chi \in B$.
This homomorphism is also denoted by  $\lambda_\chi$.
Also $\delta(B)$ is the set of defect groups of the block $B$.

We write $G^{p'}$ for the set of $p$-regular elements in $G$,
and $\ibr G$ is the set of irreducible Brauer
characters of $G$ (calculated with respect to
our fixed maximal ideal $M$). If $B$ is a $p$-block, we write ${\rm IBr}(B)$ to denote the set of irreducible Brauer characters lying in $B$.

\medskip
We shall need some basic facts on Brauer $p$-blocks which
we prove in this section.

\begin{lem} Let $B$ be a $p$-block of $G$ and let $\mu$ be a linear character 
of $G$. Then $$\irr{\mu B}=\{\mu \chi\>|\> \chi\in \irr B\}$$ is a $p$-block. Also, $B$ and 
$\mu B$ have the same defect groups.
\label{lemamub1}
\end{lem}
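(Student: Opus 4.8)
The plan is to verify directly that the set $\{\mu\chi \mid \chi \in \irr B\}$ satisfies the central-character criterion for lying in a single block, and then to identify the resulting block's defect groups with those of $B$. First I would recall that for a linear character $\mu$ of $G$ and any $\chi \in \irr G$, the product $\mu\chi$ is again irreducible, and that $(\mu\chi)(1) = \chi(1)$, so that for a conjugacy class $K$ with fixed representative $x_K$ we have
\[
\omega_{\mu\chi}(\hat K) = \frac{|K|\,\mu(x_K)\chi(x_K)}{\chi(1)} = \mu(x_K)\,\omega_\chi(\hat K).
\]
Applying $^*$ and noting that $\mu(x_K)^*$ depends only on $K$ (not on $\chi$), we get $\lambda_{\mu\chi}(\hat K) = \mu(x_K)^*\,\lambda_\chi(\hat K)$ for every class $K$. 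Hence if $\chi,\psi\in\irr B$, then $\lambda_\chi = \lambda_\psi$ forces $\lambda_{\mu\chi} = \lambda_{\mu\psi}$, so all the $\mu\chi$ lie in one block, call it $\mu B$. Conversely, multiplying by $\bar\mu$ shows the correspondence $\chi \mapsto \mu\chi$ is a bijection from $\irr B$ onto $\irr{\mu B}$ (its inverse being multiplication by $\bar\mu$ restricted to $\irr{\mu B}$), so $\irr{\mu B} = \{\mu\chi \mid \chi \in \irr B\}$ exactly.

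For the statement about defect groups, I would use the characterization of the defect of a block via the $p$-part of the central character values, or equivalently via $p$-defect of characters. Recall $d(B)$ is determined by $p^{d(B)} = \max_{\chi\in\irr B}\, |G|_p/\chi(1)_p$, and more precisely the defect group can be read off from which ideals $\lambda_B$ does not annihilate. Since $\mu\chi(1) = \chi(1)$, the character degrees in $\mu B$ are exactly those in $B$, so $d(\mu B) = d(B)$; this already gives that defect groups have the same order. To get that they are actually conjugate, I would use a sharper tool: a $p$-subgroup $D$ is a defect group of $B$ if and only if $D$ is minimal such that $\lambda_B$ vanishes on $\hat K$ for every class $K$ with $D \not\leq_G$ a defect group $\dots$ — more cleanly, one uses that $D \in \delta(B)$ iff $\lambda_B$ restricted to the image of $\zent{FG}$ under the Brauer map to $\cent G D$ behaves correctly. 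Since $\mu(x_K)^*$ is a unit in $F$ for every $K$, the homomorphism $\lambda_{\mu B}$ annihilates $\hat K$ exactly when $\lambda_B$ does; as the defect groups are characterized purely in terms of which class sums are in the kernel of the central character, $\delta(\mu B) = \delta(B)$.

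The main obstacle is choosing the cleanest formulation of ``defect groups are detected by the kernel of $\lambda_B$ on class sums'' so that the unit factor $\mu(x_K)^*$ visibly does not affect anything; the degree argument alone only gives equality of orders, not conjugacy. One clean route is via the first main theorem / Brauer correspondence: $\mu$ restricts to the Brauer map $\zent{FG}\to\zent{F\cent G D}$ as multiplication by a unit-valued class function, so $B$ and $\mu B$ have Brauer correspondents with defect group $D$ over the same subgroups $D$, forcing the defect groups to coincide up to conjugacy. Alternatively — and this is probably the shortest path for the paper — one invokes that defect groups of $B$ can be described as the vertices of the indecomposable $FG$-module affording $B$, or uses the known fact that twisting by a linear character is an automorphism of $FG$ as an interior algebra that fixes the relevant $p$-local structure; I would pick whichever of these the paper's conventions make most immediate and cite \cite{nbook} accordingly.
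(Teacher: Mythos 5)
Your argument for the first assertion is the same as the paper's: $\lambda_{\mu\chi}(\hat K)=\mu(x_K)^*\lambda_\chi(\hat K)$ with $\mu(x_K)^*$ a unit of $F$, so $\lambda_\chi=\lambda_\psi$ if and only if $\lambda_{\mu\chi}=\lambda_{\mu\psi}$, and multiplication by $\bar\mu$ inverts the map. For the defect groups the two arguments diverge slightly. The paper takes a defect class $K$ of $B$ (a class with $\lambda_B(\hat K)\ne 0$ and $a_B(\hat K)\ne 0$, where $a_B$ denotes the coefficients of the block idempotent), observes that twisting multiplies both quantities by units ($\lambda_{\mu B}(\hat K)=\mu(x_K)^*\lambda_B(\hat K)$ and $a_{\mu B}(\hat K)=\mu(x_K^{-1})^*a_B(\hat K)$), and invokes Corollary 4.5 of \cite{nbook}, which says that the defect groups of a defect class are the defect groups of the block. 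Your preferred route keeps track only of $\lambda_B$ and uses the characterization of $\delta(B)$ as the minimal members of $\{\delta(K)\mid \lambda_B(\hat K)\ne 0\}$ (equivalently, minimality of $D$ with $\lambda_B$ not vanishing on the relative trace ideal spanned by the $\hat K$ with $\delta(K)\leq_G D$); since the unit factor $\mu(x_K)^*$ does not change which class sums lie in $\ker\lambda_B$, this also works, and is if anything slightly leaner because it never mentions the idempotent. Two caveats: you should commit to and state this characterization precisely (it is the ``min'' half of the min--max theorem, Theorem 4.4 of \cite{nbook}) rather than leaving it at ``behaves correctly,'' and, as you yourself note, the degree argument you open with only controls $|D|$, not the conjugacy class of $D$, so it cannot stand alone. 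The alternative routes you sketch (Brauer correspondence, vertices, interior-algebra automorphisms) are all viable but heavier than necessary here.
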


\begin{proof}  Since $\mu(x)$ is a root of unity
for $x \in G$, then $\mu(x)^*\mu(x^{-1})^*=1$.
Hence, it is clear that if $\chi, \psi \in \irr G$, then
 $\lambda_\chi=\lambda_\psi$ if and only if 
$\lambda_{\mu\chi}=\lambda_{\mu\psi}$,
and the first part follows.
Now, let $K$ be a defect class of $B$ (see page 82 of \cite{nbook}). Then 
$\lambda_{B}(\hat{K})\neq 0$ and $a_{B}(\hat{K})\neq 0$. Notice that
$\lambda_{\mu B}(\hat K)= \mu(x_K)^* \lambda_B(\hat K)$,
and $a_{\mu B}(\hat K)=\mu(x_K^{-1})^*a_B(\hat K)$, where
$x_K \in K$. 
Since $\mu(x_K)^* \ne 0$,
the result follows from Corollary 4.5 of \cite{nbook}.\end{proof}

If $N$ is normal in $G$, following \cite{Is} and \cite{nbook} we view
the characters of $G/N$ as characters of $G$ containing $N$
in their kernel. By the remarks preceding Theorem 7.6 of \cite{nbook},
recall that every block of $G/N$ is contained in a block of
$G$.  If $\chi \in {\rm Irr}(G)$, then we denote by ${\rm bl}(\chi)$ the block of 
$G$ containing $\chi$.

\begin{lem} Let $Z\leq G$, with $Z=Z_p\times K$, where $K$ is a  normal 
$p'$-subgroup of $G$ and $Z_p$ is a central $p$-subgroup of $G$.  Let $\alpha\in{\rm 
Irr}(G)$ with $Z\subseteq {\rm ker}(\alpha)$ and write $\overline{\alpha}$ for 
the character $\alpha$ viewed as character of $G/Z$. 

\begin{enumerate}[label=(\alph*)]

\item
Let $\beta\in{\rm Irr}(G)$ with $Z\subseteq {\rm ker}(\beta)$. Then ${\rm bl}(
\alpha)=\bl(\beta)$ if and only if ${\rm bl}(\overline{\alpha})={\rm bl}(
\overline{\beta})$.
\item
We have that $$\delta({\rm bl}(\overline{\alpha}))=\{PZ/Z\>|\> P\in \delta(
{\rm bl}(\alpha))\}=\{P/Z_p\>|\> P\in \delta(
{\rm bl}(\alpha))\}.$$
\end{enumerate}
\label{lemaliftblock}
\end{lem}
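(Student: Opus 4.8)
The plan is to split $Z=Z_p\times K$ into its two factors and treat them one at a time along the chain of natural surjections $G\twoheadrightarrow G/K\twoheadrightarrow G/Z$, using that $Z/K\cong Z_p$ is a central $p$-subgroup of $G/K$ contained in the kernels of the characters of $G/K$ afforded by $\alpha$ and $\beta$. So it will suffice to prove the assertion in the two extreme cases — (i) $Z=K$ a normal $p'$-subgroup, and (ii) $Z=Z_p$ a central $p$-subgroup — and then compose. One direction of (a) is immediate: a short class-sum computation gives $\lambda_\chi=\lambda_{\overline\chi}\circ\pi$ whenever $Z\sbs\ker(\chi)$, where $\pi\colon\zent{FG}\to\zent{F[G/Z]}$ is the map induced by $G\to G/Z$ (this is exactly the fact, recalled above, that every block of $G/Z$ lies in a block of $G$), so $\bl(\overline\alpha)=\bl(\overline\beta)$ forces $\bl(\alpha)=\bl(\beta)$. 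Hence the real content is the converse of (a) together with (b).

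For case (i) I would use the central idempotent $e:=|K|^{-1}\hat K$ of $FG$: the projection restricts to an $F$-algebra isomorphism $FGe\cong F[G/K]$ (both sides have dimension $|G:K|$ and $e\mapsto 1$), hence to $\zent{FG}e\cong\zent{F[G/K]}$, under which $\lambda_\chi|_{\zent{FG}e}$ becomes $\lambda_{\overline\chi}$ for every $\chi$ with $K\sbs\ker(\chi)$ (such a $\chi$ has $\lambda_\chi(\hat K)=|K|^{*}\neq 0$, so $\lambda_\chi(e)=1$ and $e\,e_{\bl(\chi)}=e_{\bl(\chi)}$). This yields $\bl(\alpha)=\bl(\beta)\Rightarrow\bl(\overline\alpha)=\bl(\overline\beta)$; and since $P\cap K=1$ for every $P\in\delta(\bl(\alpha))$ we have $PK/K\cong P$, while the equality $\delta(\bl(\overline\alpha))=\{PK/K\mid P\in\delta(\bl(\alpha))\}$ is the classical fact that a normal $p'$-subgroup contained in the kernel does not affect defect groups (see \cite{nbook}).

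For case (ii) the key observation is that the projection $FG\to F[G/Z]$ has kernel $(FG)\cdot I$, with $I$ the augmentation ideal of $FZ$; since $Z$ is a central $p$-group this is a nilpotent two-sided ideal, so $F[G/Z]\cong FG/(\text{nilpotent ideal})$. Consequently the primitive central idempotents of $FG$ map bijectively onto those of $F[G/Z]$, giving a bijection $B\leftrightarrow\overline B$ of blocks with $\pi(e_B)=e_{\overline B}$; combined with $\lambda_\chi=\lambda_{\overline\chi}\circ\pi$ this gives $\bl(\overline\chi)=\overline{\bl(\chi)}$ whenever $Z\sbs\ker(\chi)$, and in particular $\irr{\overline B}=\{\overline\chi\mid\chi\in\irr B,\ Z\sbs\ker(\chi)\}$. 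For the defect groups I would use $Z\sbs\oh p G\sbs D$ for every $D\in\delta(B)$, so that $D/Z$ makes sense and $|G/Z|_p=|G|_p/|Z|$; granted that $B$ has a height-zero character trivial on $Z$, one finds that $D/Z$ has the order of a defect group of $\overline B$, and one then checks that $D/Z$ really is one — e.g. by Brauer's First Main Theorem applied in $G/Z$, using $\norm{G/Z}{D/Z}=\norm G D/Z$ and the compatibility of the Brauer correspondence with the bijection $B\leftrightarrow\overline B$. Thus $\delta(\overline B)=\{D/Z\mid D\in\delta(B)\}$.

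Composing cases (i) and (ii) proves (a). For (b), given $P\in\delta(\bl_G(\alpha))$ one has $Z_p\sbs P$ and $P\cap K=1$, so $PK/K\cong P$ is a defect group of $\bl_{G/K}(\overline\alpha)$, and then $(PK/K)/(Z/K)=PK/Z=PZ/Z$ (since $Z=Z_pK\sbs PK$) is a defect group of $\bl_{G/Z}(\overline\alpha)$; finally $PZ/Z=PK/Z_pK\cong P/(P\cap Z_pK)=P/Z_p$, which gives the two displayed equalities simultaneously. The step I expect to be the main obstacle is the defect-group identity in case (ii): the natural surjection $FG\to F[G/Z_p]$ has nilpotent kernel, so the block bijection is automatic, but it does not manifestly preserve defect groups, and one must either argue with the defect directly (producing a height-zero character of $B$ trivial on $Z_p$, so that quotienting by $Z_p$ exactly divides defect orders by $|Z_p|$) or route the computation through Brauer's First Main Theorem; by contrast the normal-$p'$ case is comparatively soft.
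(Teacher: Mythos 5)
Your proposal is correct and takes essentially the same route as the paper's proof: both reduce to the two extreme cases (first the normal $p'$-subgroup $K$, then the central $p$-subgroup $Z_p$) and settle each via the standard block-domination facts, the paper by citing Theorems 7.6, 9.9(c) and 9.10 of \cite{nbook} inside an induction on $|G|$, you by composing the two quotient maps directly and sketching proofs of those same facts. The step you single out as the main obstacle --- that the defect groups of the block of $G/Z_p$ dominated by $B$ are exactly the $D/Z_p$ for $D\in\delta(B)$ --- is precisely Theorem 9.10 of \cite{nbook}, which the paper simply quotes, so nothing further is needed there.
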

\begin{proof}
(a)~~It is clear that if $\bl(\overline{\alpha})=\bl(\overline{\beta})$, then 
$\bl(\alpha)=\bl(\beta)$. We need to prove the converse. We proceed by induction 
on $|G|$. By Theorem 9.9(c) of \cite{nbook} we may assume that $Z$ is a central 
$p$-group. The result follows by Theorem 7.6 of \cite{nbook}. 

For (b), we proceed by induction on $|G|$. Let $\hat{\alpha}$ be the character 
$\alpha$ viewed as a character of $G/K$. By Theorem 9.9(c) of \cite{nbook}, we 
have that $$\delta(\bl(\hat{\alpha}))=\{PK/K\>|\> P\in \delta({\rm bl}(\alpha))
\}.$$ If $K>1$, since $G/Z\cong \frac{G/K}{Z/K}$, using induction we are done. 
Hence,
we may assume that $K=1$. In this case, $Z$ is a central $p$-group. The result
now follows by Theorem 9.10 of \cite{nbook}.

\end{proof}

Suppose $\alpha: \hat G \rightarrow G$ is a surjective group homomorphism with 
kernel $Z$.
If $\psi \in \irr G$,
whenever is convenient, we  denote   by $\psi^\alpha$ the unique
irreducible character of $\hat G$ such that $\psi^\alpha(x)=\psi(\alpha(x))$ 
for $x \in \hat G$.
Notice that $Z \sbs \ker{\psi^\alpha}$. 

\medskip

\begin{cor} Suppose that $\alpha: \hat{G}\rightarrow G$ is an onto homomorphism 
with ${\rm ker}(\alpha)=Z\subseteq{\rm \textbf{Z}}(\hat{G})$.
\begin{enumerate}[label=(\alph*)]
\item
 If $\chi_i\in {\rm 
Irr}(G)$, 
then $\chi_1,\chi_2$ lie in the same block of $G$ if and only if 
$\chi_1^\alpha$ and $\chi_2^\alpha$ lie in the same block of $\hat{G}$.
\item
Suppose that $L \le G$ and let $\gamma \in \irr{L}$. If $\hat L=\alpha^{-1}(L)$,
let $\hat\gamma=\gamma^{\alpha_{\hat L}} \in \irr{\hat L}$. Then
$[(\chi^\alpha)_{\hat L}, \hat\gamma]=[\chi_L, \gamma]$
for $\chi \in \irr G$.

\item
Suppose that $\chi \in \irr G$, let $B$ be the block of $\chi$, and let $\hat B$ be
the block of $\chi^\alpha$. If $\hat D$ is a defect group of $\hat B$, then
$\alpha(\hat D)$ is a defect group of $B$.
\end{enumerate}
\label{sameblock}
\end{cor}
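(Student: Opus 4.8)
The plan is to deduce all three parts from Lemma~\ref{lemaliftblock} via the isomorphism $\bar\alpha\colon \hat{G}/Z \to G$ induced by $\alpha$. The one preliminary point is to get into the setting of that lemma: since $Z \le \zent{\hat{G}}$, the group $Z$ is abelian, so it splits as $Z = Z_p \times Z_{p'}$ with $Z_p$ its Sylow $p$-subgroup and $Z_{p'}$ its $p'$-part. Here $Z_{p'}$ is characteristic in $Z$, hence a normal $p'$-subgroup of $\hat{G}$, while $Z_p$ is a central $p$-subgroup of $\hat{G}$; thus Lemma~\ref{lemaliftblock} applies with $(G,Z,K,Z_p)$ there replaced by $(\hat{G},Z,Z_{p'},Z_p)$. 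I will also use repeatedly that for $\psi \in \irr{G}$ the character $\psi^\alpha$ has $Z$ in its kernel and, under the identification of $\hat{G}/Z$ with $G$ via $\bar\alpha$, is exactly $\psi$; in the notation of Lemma~\ref{lemaliftblock}, $\overline{\psi^\alpha} = \psi$. For (a): both $\chi_1^\alpha,\chi_2^\alpha \in \irr{\hat{G}}$ contain $Z$ in their kernels, so Lemma~\ref{lemaliftblock}(a) gives that $\chi_1^\alpha,\chi_2^\alpha$ lie in the same block of $\hat{G}$ iff $\overline{\chi_1^\alpha},\overline{\chi_2^\alpha}$ lie in the same block of $\hat{G}/Z$; since a group isomorphism carries blocks to blocks and $\overline{\chi_i^\alpha}$ corresponds to $\chi_i$ under $\bar\alpha$, the latter is equivalent to $\bl(\chi_1)=\bl(\chi_2)$.

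For (b): from $Z \le \zent{\hat{G}}$ we get $Z \le \zent{\hat{L}} \le \hat{L}$, and $\alpha$ restricts to a surjection $\alpha_{\hat{L}}\colon \hat{L}\to L$ with kernel $Z \cap \hat{L} = Z$, i.e.\ an isomorphism $\hat{L}/Z \cong L$. A direct check shows $(\chi^\alpha)_{\hat{L}} = \chi_L \circ \alpha_{\hat{L}}$, so $(\chi^\alpha)_{\hat{L}}$ is the inflation of $\chi_L$ to $\hat{L}$, and by definition $\hat\gamma$ is the inflation of $\gamma$. Since inner products of characters of a group are unchanged when those characters are viewed as characters of a quotient, $[(\chi^\alpha)_{\hat{L}},\hat\gamma]_{\hat{L}} = [\chi_L,\gamma]_L$.

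For (c): apply Lemma~\ref{lemaliftblock}(b) to the character $\chi^\alpha$ of $\hat{G}$. With $\hat{B}=\bl(\chi^\alpha)$ and $\overline{\chi^\alpha}=\chi$ (so that $\bl(\overline{\chi^\alpha})=B$ under the identification $\hat{G}/Z \cong G$), this yields
$$\delta(B) = \{\, \hat{D}Z/Z \ \mid\ \hat{D}\in\delta(\hat{B}) \,\}$$
inside $\hat{G}/Z \cong G$. For $\hat{D}\in\delta(\hat{B})$, the image of $\hat{D}Z/Z$ under $\bar\alpha$ is $\alpha(\hat{D}Z)=\alpha(\hat{D})$ (because $\alpha(Z)=1$), so $\alpha(\hat{D})$ is a defect group of $B$. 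None of this is really hard; the only thing requiring care is the bookkeeping with $\bar\alpha$ in parts (a) and (c) — in particular making sure that the description of $\delta(B)$ coming from Lemma~\ref{lemaliftblock}(b), which is phrased for the quotient $\hat{G}/Z$, is correctly transported to the statement ``$\alpha(\hat{D})$ is a defect group of $B$'' under the identification of $\hat{G}/Z$ with $G$; and the $p$/$p'$ splitting of $Z$ at the outset is exactly what makes Lemma~\ref{lemaliftblock} applicable.
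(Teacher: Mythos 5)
Your proposal is correct and follows the same route as the paper, which likewise deduces (a) and (c) from Lemma~\ref{lemaliftblock} via the induced isomorphism $\bar\alpha\colon \hat G/Z\to G$ and treats (b) as a routine inflation computation. Your write-up just supplies the details (the $Z=Z_p\times Z_{p'}$ splitting needed to invoke that lemma, and the bookkeeping with $\bar\alpha$) that the paper leaves implicit.
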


\begin{proof}
Part (a) is a direct consequence of Lemma \ref{lemaliftblock}(a).
Part (b) is straightforward.
To prove part (c), define $\bar \alpha: \hat G/Z \rightarrow G$ to be the associated isomorphism.
Since $Z \subseteq \ker{\chi^\alpha}$, by Lemma \ref{lemaliftblock}(b),
we have that $\hat D Z/Z$ is a defect group of the block of $\chi^\alpha$
viewed as a character of $\hat G/Z$. Since $\alpha(\hat D)=\bar\alpha(\hat DZ/Z)$,
the result follows.
\end{proof}

We shall use Gallagher's Corollary 6.18 of \cite{Is}  in several parts of this 
paper.
Recall that if $N \nor G$, $\chi \in \irr G$ and $\chi_N=\theta$, then
$\beta \mapsto \beta\chi$ defines a bijection $\irr{G/N} \rightarrow 
\irr{G|\theta}$,
where $\irr{G|\theta}$ denotes the set of irreducible constituents of
the induced character $\theta^G$.

\begin{lem} Let $N\normal G$ and let $\chi\in{\rm Irr}(G)$. Suppose that 
$\chi_N=\theta\in{\rm Irr}(N)$. Let $\chi_1,\chi_2\in{\rm Irr}(G|\theta)$ and write 
$\chi_i=\beta_i \chi$, for $i=1,2$, where $\beta_i$ in ${\rm Irr}(G/N)$. Suppose 
that $\beta_1$ and $\beta_2$ lie in the same $p$-block of $G/N$. Then $\chi_1$ 
and $\chi_2$ lie in the same $p$-block of $G$.
Also,  if $\beta\in{\rm Irr}(G/N)$ and $P/N$ is a defect group of ${\rm bl}(\beta)$,
then  $P\subseteq DN$, for some  defect group $D$ of ${\rm bl}(\beta\chi)$.
\label{lema}
\end{lem}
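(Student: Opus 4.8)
The statement is about the interaction of Gallagher's bijection with $p$-blocks, so the natural tool is the theory of central characters and the homomorphisms $\lambda_\chi\colon \zent{FG}\to F$. Let me set up notation: we have $N\nor G$, $\chi\in\irr G$ with $\chi_N=\theta$, and $\chi_i=\beta_i\chi$ with $\beta_i\in\irr{G/N}$, viewed as characters of $G$ with $N$ in their kernel. The hypothesis is $\bl(\beta_1)=\bl(\beta_2)$ as blocks of $G/N$.

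\emph{Step 1 (first assertion).} I would compute $\lambda_{\chi_i}(\hat K)$ for a conjugacy class $K$ of $G$ in terms of $\lambda_\chi$ and the central character of $\beta_i$. Fix $x_K\in K$. Then $\omega_{\chi_i}(\hat K) = |K|\chi_i(x_K)/\chi_i(1) = |K|\beta_i(x_K)\chi(x_K)/(\beta_i(1)\chi(1))$. If $x_K\notin$ the union of classes meeting the support appropriately, some care is needed, but the clean way is this: since $\beta_i$ is a character of $G/N$, its value $\beta_i(x_K)$ depends only on the image $x_KN$, and $\beta_i(x_K) = \beta_i(1)\,\frac{\beta_i(x_K)}{\beta_i(1)}$. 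The point is that $\omega_{\chi_i}(\hat K) = \omega_{\beta_i}(\widehat{\bar K})\cdot\omega_\chi(\hat K)$ when $\beta_i(1)\mid$ ... — actually the genuinely correct identity avoids division issues by working with the algebra homomorphism on the whole center: for $z\in\zent{FG}$ one has (after a short check on class sums and their images in $G/N$) that $\lambda_{\chi_i}$ factors as a product of $\lambda_\chi$ and the pullback along $FG\to F[G/N]$ of $\lambda_{\beta_i}$. Because $\bl(\beta_1)=\bl(\beta_2)$, the pulled-back homomorphisms $\lambda_{\beta_1}$ and $\lambda_{\beta_2}$ agree on the image of $\zent{FG}$ in $\zent{F[G/N]}$, hence $\lambda_{\chi_1}=\lambda_{\chi_2}$, so $\chi_1$ and $\chi_2$ are in the same block of $G$.

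\emph{Step 2 (defect groups).} For the second assertion, write $B=\bl(\beta\chi)$ with defect group $D$, and $P/N$ a defect group of $\bl(\beta)$ as a block of $G/N$. I would use the characterization of defect groups via vertices / via the defect-class description: $P/N$ is, up to conjugacy, a Sylow $p$-subgroup of $\cent{G/N}{gN}$ for $gN$ a $p$-element in a defect class of $\bl(\beta)$; equivalently $P/N$ is an intersection of Sylow subgroups coming from the $p$-part of class sums on which $\lambda_{\bl(\beta)}$ is nonzero. Transporting through Step 1, any class sum $\hat K$ with $\lambda_B(\hat K)\ne 0$ has $\lambda_{\bl(\beta)}(\widehat{\bar K})\ne 0$; picking $K$ a defect class of $B$ so that a Sylow $p$-subgroup of $\cent G{x_K}$ equals $D$, its image controls $P/N$ up to conjugacy. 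The cleanest route is probably Brauer's first main theorem machinery or simply the inclusion $\delta(\bl(\beta)) \ni P/N \Rightarrow P \le \cent G{x}S$ for suitable $x$, $S$, forcing $P\subseteq DN$ after conjugating $D$. Alternatively, and more in the spirit of this paper, I would quotient: apply Lemma~\ref{lemaliftblock} and Corollary~\ref{sameblock} to a central extension trick isn't available here since $N$ need not be central — so instead I would use the known fact (Knörr, or \cite{nbook} Ch.~9) relating defect groups under $\bar{}: G\to G/N$ for a block $b$ of $G$ covering a block of $N$: a defect group of $b$ maps onto a defect group of the block of $G/N$ it dominates when $N$ is in the kernel of the relevant characters. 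Here $\bl(\beta\chi)$ need not have $N$ in kernels, so one really does need the sharper statement $P\subseteq DN$ rather than $P/N = DN/N$.

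\textbf{Main obstacle.} The first assertion is essentially a central-character computation and should be routine once the factorization identity $\lambda_{\beta\chi} = \lambda_\chi\cdot(\lambda_\beta\circ\text{projection})$ is stated carefully (the subtlety being that $\chi(x_K)$ may differ from $\theta$-related values on $N$-cosets, so one must verify the identity on class sums, not just on individual elements). The real work is Step 2: controlling defect groups of $\bl(\beta\chi)$ from below by $P$. The difficulty is that $\beta\chi$ does not have $N$ in its kernel, so one cannot directly invoke the clean quotient formulas for defect groups; instead one must compare defect \emph{classes}, using that a defect class $K$ of $\bl(\beta\chi)$ maps into a class of $G/N$ on which $\lambda_{\bl(\beta)}$ is nonzero, and then argue that a Sylow $p$-subgroup of $\cent G{x_K}$ surjects (after conjugation) onto a group containing a conjugate of $P/N$. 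I expect to invoke Corollary~4.5 and the defect-class results of \cite{nbook} together with the observation that $\cent G{x_K}N/N \le \cent{G/N}{x_KN}$, which gives the containment $P \subseteq DN$ up to $G$-conjugacy after choosing the defect group $D$ compatibly.
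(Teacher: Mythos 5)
Your strategy for the first assertion is the same as the paper's (factor the central character of $\beta_i\chi$ into a piece depending only on $\chi$ and a piece depending only on ${\rm bl}(\beta_i)$), but the identity you settle on is false, and it fails exactly where the ``division issues'' you flag resurface. Let $K$ be a class of $G$ with representative $x$, let $S=(xN)^{G/N}$, and let $\pi\colon\zent{FG}\to\zent{F[G/N]}$ be the natural map; then $\pi(\hat K)=(|K|/|S|)^*\,\hat S$. Writing $H/N=\cent{G/N}{xN}$ and $L=x^H$, one has $\cent G x\le H$ and hence the integer identity $|K|=|L|\,|S|$. A direct computation then gives
$$\lambda_\chi(\hat K)\cdot\lambda_{\beta_i}\bigl(\pi(\hat K)\bigr)=|L|^*\lambda_\chi(\hat K)\lambda_{\beta_i}(\hat S)=|K|^*\,\lambda_{\chi_H}(\hat L)\lambda_{\beta_i}(\hat S)=|K|^*\,\lambda_{\beta_i\chi}(\hat K),$$
which differs from $\lambda_{\beta_i\chi}(\hat K)$ by the factor $|K|^*$. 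That factor is $0$ on every defect class of a block of non-full defect, so your proposed factorization fails precisely on the classes that determine blocks. The correct identity --- Lemma 2.2 of \cite{NS2}, which is what the paper uses --- is $\omega_{\beta\chi}(\hat K)=\omega_{\chi_H}(\hat L)\,\omega_{\beta}(\hat S)$: the $\chi$-factor must be evaluated on the class $L$ of the subgroup $H$, not on $K$, and it is $|K|=|L|\,|S|$ that removes the division by $|S|$. With this corrected identity your deduction $\lambda_{\chi_1}=\lambda_{\chi_2}$ goes through verbatim.

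For the second assertion your text is a plan rather than a proof: you list several possible routes and correctly identify the obstruction (that $\beta\chi$ does not contain $N$ in its kernel, so quotient formulas for defect groups do not apply directly), but you do not complete any of them. The containment $P\subseteq DN$ is Proposition 2.5(b) of \cite{NS2}, which the paper simply cites; its proof runs along the lines of your defect-class sketch, using the factorization above together with $\cent{G}{x}N/N\le\cent{G/N}{xN}$, so the instinct is right but the argument as written is not yet a proof.
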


\begin{proof}
Let $K$ be a conjugacy class of $G$ and let $x\in K$. Write $H/N={\rm 
\textbf{C}}_{G/N}(xN)$, let $L$ be the conjugacy class of $H$ containing $x$, and let 
$S$ be the conjugacy class of $G/N$ containing $xN$. Then, by Lemma 2.2 of 
\cite{NS2}, we have that 

$$\lambda_{\chi_1}(\hat{K})=\lambda_{\chi\beta_1}(\hat{K})=\lambda_{\chi_H}(
\hat{L})\lambda_{\beta_1}(\hat{S}).$$ Since $\beta_1$ and $\beta_2$ lie in the 
same $p$-block of $G/N$, we have that 

$$\lambda_{\beta_1}(\hat{S})=\lambda_{\beta_2}(\hat{S}),$$ and hence, again by 
Lemma 2.2 of \cite{NS2}
we have that
$$\lambda_{\chi_1}(\hat{K})=\lambda_{\chi_H}(\hat{L})\lambda_{\beta_1}(\hat{S})
=\lambda_{\chi_H}(\hat{L})\lambda_{\beta_2}(\hat{S})=\lambda_{\chi\beta_2}(
\hat{K})=\lambda_{\chi_2}(\hat{K}).$$ Hence $\chi_1$ and $\chi_2$ lie in the same 
$p$-block of $G$.  The second part is
Proposition 2.5(b) of \cite{NS2}.
\end{proof}

\section{Reviewing projective representations}\label{secreviewprojrepr}

To define the $\theta$-blocks we need some background on projective representations.
We follow Chapter 11 of \cite{Is} and Chapter 5 of \cite{nbook2}.
Recall that a complex {\bf projective representation} of a finite group $G$ is a map 
$$\mathcal{P}:G\rightarrow {\rm GL}_n(\mathbb{C})$$ such that for every $x,y\in G$ there is some $\alpha(x,y)\in \mathbb{C}^\times$ satisfying $$\mathcal{P}(x)\mathcal{P}(y)=\alpha(x,y)\mathcal{P}(xy).$$ The  function $\alpha:G\times G\rightarrow \mathbb{C}^\times$ is called the {\bf factor set} of $\mathcal{P}$.
\medskip

If $G$ is a finite group, $N \nor G$, and $\theta \in \irr N$
is $G$-invariant, then we
say that  $(G,N,\theta)$ is a {\bf character triple}. The theory of character triples
and their isomorphisms
was developed by I. M. Isaacs, and we refer to Chapter 11
of \cite{Is} for their properties.

If $(G,N,\theta)$
is a character triple, we say that a projective representation
of $G$ is {\bf associated} with $\theta$ if 

\begin{enumerate}[label=(\alph*)]
\item $\mathcal{P}_N$ is an ordinary representation of $N$ affording $\theta$, and 
\item $\mathcal{P}(ng)=\mathcal{P}(n)\mathcal{P}(g)$ and $\mathcal{P}(gn)=\mathcal{P}(g)\mathcal{P}(n)$ for $g\in G$ and $n\in N$. 

\end{enumerate}
\medskip

We will need the following later.

\begin{lem} Suppose that $(G,N,\theta)$ is a character triple, and let $\mathcal{P}$ be a projective representation of $G$ associated with $\theta$ with factor set $\alpha$. Then 
\begin{enumerate}[label=(\alph*)]
\item $\alpha(1,1)=\alpha(g,n)=\alpha(n,g)=1$ for $n\in N$, $g\in G$.
\item $\alpha(xn,ym)=\alpha(x,y)$ for $x,y\in G$, $n,m\in N$.
\end{enumerate}
\label{lemafactorsets}
\end{lem}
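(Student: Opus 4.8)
The plan is to derive both statements directly from the defining conditions (a) and (b) of a projective representation associated with $\theta$, together with the associativity identity that every factor set satisfies. Recall that from $\mathcal{P}(x)\mathcal{P}(y)=\alpha(x,y)\mathcal{P}(xy)$ one gets, by comparing $(\mathcal{P}(x)\mathcal{P}(y))\mathcal{P}(z)$ with $\mathcal{P}(x)(\mathcal{P}(y)\mathcal{P}(z))$, the cocycle relation
$$\alpha(x,y)\alpha(xy,z)=\alpha(y,z)\alpha(x,yz)$$
for all $x,y,z\in G$. I would state this once at the start of the proof, as it is the only nontrivial ingredient.

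For part (a): first, applying condition (b) with $g=n=1$ (noting $1\in N$) gives $\mathcal{P}(1)=\mathcal{P}(1)\mathcal{P}(1)$, and since $\mathcal{P}(1)=\alpha(1,1)^{-1}\mathcal{P}(1)\mathcal{P}(1)$ is invertible, we get $\mathcal{P}(1)=I$ and hence $\alpha(1,1)=1$. Next, for $n\in N$ and $g\in G$, condition (b) says $\mathcal{P}(n)\mathcal{P}(g)=\mathcal{P}(ng)$; comparing with the factor set relation $\mathcal{P}(n)\mathcal{P}(g)=\alpha(n,g)\mathcal{P}(ng)$ and using that $\mathcal{P}(ng)$ is invertible yields $\alpha(n,g)=1$. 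The identity $\alpha(g,n)=1$ follows symmetrically from the other half of condition (b), $\mathcal{P}(gn)=\mathcal{P}(g)\mathcal{P}(n)$.

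For part (b): I want to show $\alpha(xn,ym)=\alpha(x,y)$ for $x,y\in G$ and $n,m\in N$. I would do this in two steps, peeling off $n$ and $m$ one at a time using part (a) and the cocycle relation. For the right slot, apply the cocycle identity to the triple $(x,n,y)$: this gives $\alpha(x,n)\alpha(xn,y)=\alpha(n,y)\alpha(x,ny)$, and since $\alpha(x,n)=\alpha(n,y)=1$ by part (a), we get $\alpha(xn,y)=\alpha(x,ny)$. Now I still need to move $n$ all the way out; note $ny = y y^{-1} n y = y n'$ where $n'=y^{-1}ny\in N$ since $N\nor G$, so $\alpha(x,ny)=\alpha(x,yn')$, and applying the cocycle identity to $(x,y,n')$ together with part (a) ($\alpha(y,n')=\alpha(x,n')=\alpha(xy,n')=1$) gives $\alpha(x,yn')=\alpha(x,y)$. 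Combining, $\alpha(xn,y)=\alpha(x,y)$. Then a symmetric argument on the left slot — using the cocycle identity on $(xn, y, m)$ or equivalently reducing via $\alpha(xn,ym)$ and part (a) — reduces $\alpha(xn,ym)$ to $\alpha(xn,y)$, and we are done. The only real subtlety, and the step I would be most careful about, is the normality manoeuvre $ny=yn'$ needed to bring an element of $N$ past an element of $G$ inside the argument of $\alpha$; everything else is bookkeeping with the cocycle relation and part (a).
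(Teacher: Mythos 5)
Your proof is correct, and it is essentially the standard argument: the paper itself gives no proof but simply cites Lemma 11.5 and Theorem 11.7 of Isaacs (and Lemma 5.3 of Navarro's book), whose proofs run exactly along your lines — deduce $\alpha(g,n)=\alpha(n,g)=1$ from the compatibility condition $\mathcal{P}(ng)=\mathcal{P}(n)\mathcal{P}(g)$, then propagate via the cocycle identity, using normality of $N$ to rewrite $ny=yn'$. All the individual steps you give (including the slightly delicate conjugation manoeuvre) check out.
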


\begin{proof}
This is Lemma 11.5 and Theorem 11.7 of \cite{Is}. See also Lemma 5.3 of \cite{nbook2}.
\end{proof}

 An important fact about projective representations is that given a character triple $(G,N,\theta)$, there always exists a projective representation associated with $\theta$ such that its factor set has roots of unity values. 

\begin{thm} Let $(G,N,\theta)$ be a character triple. Then there exists a projective representation associated with $\theta$ with factor set $\alpha$ such that 

$$\alpha(x,y)^{|G|\theta(1)}=1$$ for all $x,y\in G$.
\end{thm}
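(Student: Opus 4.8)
The plan is to build the desired projective representation from an arbitrary one associated with $\theta$ by correcting its factor set up to a coboundary, taking advantage of the fact that factor sets supported on $G$ (as opposed to $\hat G$) automatically take values in a finite cyclic group once we know $\theta$ is afforded by a genuine representation of $N$. First I would invoke the existence of \emph{some} projective representation $\mathcal{Q}$ of $G$ associated with $\theta$, with factor set $\beta$; this is the standard consequence of $\theta$ being $G$-invariant (Isaacs, Chapter 11). By Lemma~\ref{lemafactorsets}, $\beta$ is constant on cosets of $N \times N$, so it factors through a factor set $\bar\beta$ of the finite group $G/N$, and in particular $\bar\beta$ (hence $\beta$) takes only finitely many values.

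Next I would use the classical fact that every cohomology class of the finite group $G/N$ has finite order dividing $|G/N|$; concretely, if $T$ is a transversal for $N$ in $G$ and we set, for each $x \in G/N$, $\mu(x) = \prod_{t \in G/N} \bar\beta(x, t)$, then the standard averaging computation gives
\begin{equation*}
\mu(x)\mu(y) = \bar\beta(x,y)^{|G/N|}\,\mu(xy)
\end{equation*}
for all $x, y \in G/N$. Pulling $\mu$ back to $G$ via the quotient map (so $\mu$ is constant on $N$-cosets and $\mu(n) = 1$ for $n \in N$ after normalizing, using that $\bar\beta(1,\cdot) = 1$), I would then replace $\mathcal{Q}$ by the scaled map $\mathcal{P}(g) = \mu(gN)^{-1/|G/N|}\,\mathcal{Q}(g)$ — more carefully, to avoid choosing roots, I would instead pass through a representation-group / central extension argument or simply observe it suffices to adjust $\beta$ by the coboundary of the function $g \mapsto \mu(gN)$ raised to an appropriate root. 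The cleaner route: the class of $\beta^{|G/N|}$ in $H^2(G/N,\CC^\times)$ is trivial, so $\beta^{|G/N|}$ is a coboundary $\delta(\nu)$ for some $\nu: G \to \CC^\times$ constant on $N$-cosets; since $\CC^\times$ is divisible, choose $\nu_0$ with $\nu_0^{|G/N|} = \nu$ and constant on $N$-cosets, and set $\mathcal{P}(g) = \nu_0(g)^{-1}\mathcal{Q}(g)$. Then $\mathcal{P}$ is still associated with $\theta$ (conditions (a), (b) are preserved because $\nu_0$ is trivial on $N$ and bimultiplicative against $N$), and its factor set is $\alpha(x,y) = \nu_0(x)^{-1}\nu_0(y)^{-1}\nu_0(xy)\,\beta(x,y)$, whose $|G/N|$-th power equals $\delta(\nu_0^{|G/N|})^{-1}\beta^{|G/N|} = \delta(\nu)^{-1}\beta^{|G/N|} = 1$.

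Finally I would reconcile this with the stated exponent $|G|\theta(1)$: since $\alpha$ takes roots of unity as values, so does its restriction; and because $\theta(1)$ divides $|N|$ and $|G/N|$ divides $|G|$, the exponent $|G/N|$ dividing into something dividing $|G|\theta(1)$ is immediate, so $\alpha(x,y)^{|G|\theta(1)} = 1$ as well. (If one prefers, one can also argue directly that the determinant route forces the order of $\alpha$ to divide $n\cdot|G/N|$ where $n = \theta(1)$ is the degree, since $\det \mathcal{P}$ compared on both sides of $\mathcal{P}(x)\mathcal{P}(y) = \alpha(x,y)\mathcal{P}(xy)$ yields $\alpha(x,y)^n = (\det\mathcal{P})(x)(\det\mathcal{P})(y)(\det\mathcal{P})(xy)^{-1}$, a coboundary on $G/N$, hence of order dividing $|G/N|$.) The main obstacle — and the only genuinely nontrivial point — is making the coboundary correction without destroying the two defining properties of being \emph{associated with} $\theta$; this is why it is essential that $\nu_0$ be chosen to be a lift of a function on $G/N$ (equivalently, trivial on $N$), so that $\mathcal{P}_N = \mathcal{Q}_N$ still affords $\theta$ and the bimultiplicativity relations against $N$ survive. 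Everything else is the standard finiteness of $H^2$ of a finite group plus divisibility of $\CC^\times$.
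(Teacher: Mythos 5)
Your main argument is correct, but note that the paper does not actually prove this statement: it simply refers the reader to Theorem 8.2 of \cite{Is2} and Theorem 5.5 of \cite{nbook2}. What you supply is a self-contained version of the standard cohomological argument, and your ``cleaner route'' in fact yields the sharper conclusion $\alpha(x,y)^{|G/N|}=1$. Indeed, the factor set $\beta$ of an arbitrary projective representation $\mathcal{Q}$ associated with $\theta$ is, by Lemma \ref{lemafactorsets}, well defined as a $2$-cocycle of $G/N$; the averaging identity gives $\beta^{|G/N|}=\delta(\nu)$ with $\nu$ constant on $N$-cosets; and extracting a pointwise $|G/N|$-th root $\nu_0$ of $\nu$, normalized so that $\nu_0(1)=1$ (possible because $\nu(1)=\prod_{t}\beta(1,t)=1$ by Lemma \ref{lemafactorsets}(a), so rescaling $\nu_0$ by $\nu_0(1)^{-1}$ does not disturb $\nu_0^{|G/N|}=\nu$) and setting $\mathcal{P}=\nu_0^{-1}\mathcal{Q}$ preserves both defining properties of being associated with $\theta$ while trivializing $\alpha^{|G/N|}$. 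Since $|G/N|$ divides $|G|\theta(1)$, the stated exponent follows. Two small blemishes, neither affecting the main line: the formula $\mathcal{P}(g)=\mu(gN)^{-1/|G/N|}\mathcal{Q}(g)$ in your first paragraph should simply be deleted, as the fractional power is not well defined and your second formulation is the one that works; and the parenthetical determinant argument at the end is not correct as stated, because $\alpha^{\theta(1)}=\delta(\det\mathcal{P})$ is a coboundary on $G$, not on $G/N$ ($\det\mathcal{P}$ restricts to $\det\theta$ on $N$ and so does not factor through $G/N$), hence one cannot conclude from it that the class of $\alpha^{\theta(1)}$ is trivial in $H^2(G/N,\CC^{\times})$.
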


\begin{proof} See for instance Theorem 8.2 of \cite{Is2} or Theorem 5.5 of \cite{nbook2}.
\end{proof}

Using such a projective representation $\mathcal P$, it is possible to associate to each
character triple $(G,N, \theta)$ a new finite group $\hat G$, a finite central
extension of $G$ which only depends on $\mathcal P$.
This finite group $\hat G$ contains $N$
as a normal subgroup, and an irreducible character
  $\tau \in \irr{\hat G}$ that extends $\theta$.  The next theorem explains exactly how
  to do this.

\begin{thm} Let $(G,N,\theta)$ be a character triple and let $\mathcal{P}$ be a projective representation of $G$ associated with $\theta$ such that the factor set $\alpha$ of $\mathcal{P}$ 
only takes roots of unity values. Let $Z\leq \mathbb{C}^\times$ be the subgroup generated by the values of $\alpha$. Let $\hat{G}=\{(g,z)\>|\> g\in G, z\in Z\}$ with the multiplication given as follows:

 $$(x,a)(y,b)=(xy,\alpha(x,y)ab).$$ Then $\hat{G}$ is a finite group. Besides, if we identify $N$ with $N\times 1$ and $Z$ with $1\times Z$, we have that the following hold.
\begin{enumerate}[label=(\alph*)]
\item $N\normal\hat{G}$, $Z\subseteq {\rm \textbf{Z}}(\hat{G})$, and $\hat{N}=N\times Z\normal \hat{G}$. Moreover, if $\pi:\hat G \rightarrow G$ is given by $(g,z) \mapsto g$, then $\pi$ is
an onto homomorphism with kernel $Z$. Also, if $N \subseteq \zent G$, then
$\hat N \subseteq \zent{\hat G}$.
\item The function $\hat{\mathcal{P}}(g,z)=z\mathcal{P}(g)$ 
defines an irreducible representation of $\hat{G}$ whose character $\tau\in{\rm Irr}(\hat{G})$ extends $\theta$.
In fact, $\tau(n,z)=z\theta(n)$ for $n \in N$ and $z \in Z$.
In particular,
 if $\hat{\theta}=\theta\times 1_Z\in{\rm Irr}(\hat{N})$, and $\hat{\lambda}\in{\rm Irr}(\hat{N})$ is defined by $\hat{\lambda}(n,z)=z^{-1}$, then $\hat{\lambda}$ is a linear $\hat{G}$-invariant character with $N={\rm ker}(\hat{\lambda})$ and $\hat{\lambda}^{-1}\hat{\theta}$ extends to $\tau\in{\rm Irr}(\hat{G})$.
\end{enumerate}
\label{teogtilde}
\end{thm}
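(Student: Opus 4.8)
The plan is to verify directly that the set $\hat{G} = G \times Z$ with the stated multiplication is a group, and then to check each of the structural claims (a) and (b) by unwinding definitions and applying Lemma~\ref{lemafactorsets}.

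First I would establish that $\hat G$ is a group. Associativity amounts to the $2$-cocycle identity $\alpha(x,y)\alpha(xy,w) = \alpha(x,yw)\alpha(y,w)$, which is exactly what one gets by computing $\mathcal P(x)\mathcal P(y)\mathcal P(w)$ in two ways from the defining relation $\mathcal P(x)\mathcal P(y) = \alpha(x,y)\mathcal P(xy)$; associativity of matrix multiplication gives the cocycle identity, hence associativity in $\hat G$. By Lemma~\ref{lemafactorsets}(a), $\alpha(1,1)=1$, so $(1,1)$ is the identity; and $(x,a)^{-1} = (x^{-1}, \alpha(x,x^{-1})^{-1}a^{-1})$ works since $\alpha(x,x^{-1}) = \alpha(x^{-1},x)$ (again from Lemma~\ref{lemafactorsets}(a), as $\alpha(x^{-1},x)\alpha(1,x)=\alpha(x^{-1},1)\alpha(x,x)$... more carefully, one uses the cocycle identity with appropriate arguments). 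Since $Z$ is finite (values of $\alpha$ are roots of unity of bounded order) and $G$ is finite, $\hat G$ is finite.

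Next, for part (a): the map $\pi(g,z)=g$ is a homomorphism because the first coordinate of the product is $xy$, with kernel $1\times Z \cong Z$. That $Z \subseteq \zent{\hat G}$ follows since $(1,a)(y,b) = (y,ab) = (y,b)(1,a)$ using $\alpha(1,y)=\alpha(y,1)=1$. For $N = N\times 1 \normal \hat G$ and the product formula $(n,1)(g,z) = (ng, \alpha(n,g)z) = (ng,z)$ by Lemma~\ref{lemafactorsets}(a), together with $(g,z)(n,1) = (gn, z)$, one checks $N$ is normalized by $\hat G$ (indeed $\hat G / N$ is abelian of the appropriate shape), and combining with centrality of $Z$ gives $\hat N = N \times Z \normal \hat G$. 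If $N \subseteq \zent G$, then $N\times 1$ commutes with all $(g,z)$ by the two displayed product formulas (both reduce to $(ng,z)$ and $(gn,z)$, equal since $ng=gn$), so $\hat N \subseteq \zent{\hat G}$.

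For part (b): one checks $\hat{\mathcal P}(x,a)\hat{\mathcal P}(y,b) = a\mathcal P(x)b\mathcal P(y) = ab\,\alpha(x,y)\mathcal P(xy) = \hat{\mathcal P}\big((x,a)(y,b)\big)$, so $\hat{\mathcal P}$ is an ordinary representation; its degree equals $\theta(1)$ and its restriction to $N = N\times 1$ is $\mathcal P_N$, which affords $\theta$ and is irreducible, so $\hat{\mathcal P}$ is irreducible with character $\tau$, and $\tau_N = \theta$, i.e.\ $\tau$ extends $\theta$; the formula $\tau(n,z) = z\theta(n)$ is immediate from $\hat{\mathcal P}(n,z) = z\mathcal P(n)$. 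Finally, defining $\hat\lambda(n,z) = z^{-1}$ on $\hat N = N\times Z$, one verifies it is a linear character, it is $\hat G$-invariant because $Z$ is central and $\hat G/N$ acts trivially on $Z$ (conjugation in $\hat G$ fixes the $Z$-coordinate up to the factor set, which lies in $Z$ — here one must be a little careful, but since $\hat N$ is abelian and $Z$ central, conjugation by $(g,z)$ sends $(n,w)$ to something with the same $Z$-component contribution, so $\hat\lambda$ is invariant), its kernel is $N\times 1 = N$, and $\hat\lambda^{-1}\hat\theta$ where $\hat\theta = \theta\times 1_Z$ is the character $(n,z)\mapsto z\theta(n) = \tau(n,z)$, which extends to $\tau \in \irr{\hat G}$ by construction.

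The main obstacle I anticipate is bookkeeping around the $\hat G$-invariance of $\hat\lambda$ and the precise verification that conjugation in $\hat G$ does not disturb the $Z$-coordinate in a way that breaks invariance; this requires computing $(g,z)(n,w)(g,z)^{-1}$ explicitly and using Lemma~\ref{lemafactorsets} to see that the factor-set contributions cancel. Everything else is a routine but careful unwinding of the twisted multiplication and repeated appeals to $\alpha(n,g) = \alpha(g,n) = 1$ for $n \in N$.
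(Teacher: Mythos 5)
Your proposal is correct, and it is essentially the direct verification that the paper delegates to the cited sources (Theorem 11.28 of [Is] and Theorems 5.5--5.6 of [N2]): check the cocycle identity for associativity, use Lemma \ref{lemafactorsets} to see that $N\times 1$ and $1\times Z$ sit inside $\hat G$ as claimed, and compute $\hat{\mathcal P}$ on products. One small caveat: your parenthetical remark that ``$\hat G/N$ is abelian'' is false in general ($\hat G/N$ is a central extension of $G/N$ by $Z$, and $G/N$ need not be abelian), but it is not needed, since normality of $N$ follows directly from your two product formulas: $(g,z)(n,1)=(gn,z)=(gng^{-1},1)(g,z)$. For the invariance of $\hat\lambda$, the cleanest route through the bookkeeping you worry about is to write $(n,w)=(n,1)(1,w)$ with $(1,w)$ central, so that $(g,z)(n,w)(g,z)^{-1}=(gng^{-1},w)$ by the same computation, and the $Z$-coordinate is untouched.
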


\begin{proof}
See   Theorem 11.28 of \cite{Is} or Theorem 5.6 of \cite{nbook2}.
The properties of the factor set $\alpha$ that we have listed in Lemma \ref{lemafactorsets} are essential
to prove  (a).
\end{proof}

Given a character triple $(G,N,\theta)$, we call the group $\hat{G}$ defined in Theorem \ref{teogtilde} a \textbf{representation group for $(G,N,\theta)$ associated with $\mathcal{P}$}.
We also say that   $\tau \in \irr{\hat G}$ is the character of $\hat G$ {\bf associated} with $\mathcal P$.
By Theorem \ref{teogtilde}(b), we have that $\tau_N=\theta$.

\medskip

In order to define the $\theta$-blocks the following is essential. We assume the reader is familiar with the notion of character triple isomorphism (see Definition 11.23 of \cite{Is}). 
\begin{thm} Let $(G,N,\theta)$ be a character triple and let $\mathcal{P}$ be a projective representation of $G$ associated with $\theta$. Let $\hat{G}$ be a representation group for $(G,N,\theta)$ associated with $\mathcal{P}$. Then $(G,N,\theta)$ and $(\hat{G}/N,\hat{N}/N,\hat{\lambda})$ are  isomorphic character triples.
\label{lemaisomchartriplgtilde}
\end{thm}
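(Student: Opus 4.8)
The goal is to produce an explicit character triple isomorphism $(G,N,\theta) \to (\hat G/N, \hat N/N, \hat\lambda)$. The plan is to use the projection $\pi : \hat G \to G$ with kernel $Z$ to move between the two pictures. First I would record the structural facts from Theorem \ref{teogtilde}: $N \le \hat G$, $Z \le \zent{\hat G}$, $\hat N = N \times Z \nor \hat G$, $\hat\lambda \in \irr{\hat N}$ is the linear $\hat G$-invariant character with $\ker{\hat\lambda} = N$, and (crucially) $\hat\lambda^{-1}\hat\theta$ extends to $\tau \in \irr{\hat G}$ with $\tau_N = \theta$. Since $N = \ker{\hat\lambda}$, the character $\hat\lambda$ is genuinely a character of $\hat N/N$, and since $\hat G$ acts trivially on $\hat N/N \cong Z$ (as $Z$ is central), $\hat\lambda$ is $\hat G/N$-invariant; so $(\hat G/N, \hat N/N, \hat\lambda)$ is indeed a character triple.

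Next I would set up the bijection on subgroups. For $N \le U \le G$, put $\hat U = \pi^{-1}(U) \le \hat G$, so $\hat U \supseteq \hat N$ and $\hat U / \hat N \cong U/N$ via the map induced by $\pi$; this is an order-preserving bijection between $\{\,U : N \le U \le G\,\}$ and $\{\,\hat U/N : \hat N/N \le \hat U/N \le \hat G/N\,\}$, taking $N$ to $\hat N/N$. For the character part, I would use Gallagher together with the extension $\tau$: since $\tau_N = \theta$ and $\tau$ lies over $\hat\theta = \hat\lambda^{-1} \cdot (\hat\lambda \hat\theta)$... more cleanly, I would work with the character $\tau' = \tau$ itself as the ``canonical extension'' on the $\hat G$ side, noting $\tau$ extends $\theta$, and for each $\hat U$ I would match $\psi \in \irr{U \mid \theta}$ with a character of $\hat U/N$ lying over $\hat\lambda$. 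Concretely, given $\psi \in \irr{U\mid\theta}$, form $\psi^\pi \in \irr{\hat U}$ (inflation along $\pi_{\hat U}$, with $Z$ in its kernel); then $\psi^\pi$ lies over $\hat\theta = \theta \times 1_Z$. Multiplying by the linear character $\tau^{-1}\cdot(\text{something})$ is awkward, so instead the standard device is: the map $\sigma_U : \irr{U\mid\theta} \to \irr{\hat U / N \mid \hat\lambda}$ sending $\psi$ to the character whose inflation to $\hat U$ is $(\tau_{\hat U})^{-1}\psi^\pi$ — but $\tau_{\hat U}$ need not be irreducible, so I would rather invoke the projective-representation formalism directly: $\mathcal P_U$ restricted gives a representation $\hat{\mathcal P}_{\hat U}$ affording $\tau_{\hat U}$, and tensoring an ordinary representation of $U$ lying over $\theta$ with the ``inverse factor set'' realizes the bijection. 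In the write-up I would simply cite that this is exactly the construction in Theorem 11.28 / Problem 11.10 of \cite{Is}: the map $\psi \mapsto \hat\psi$, where $\hat\psi$ is determined by requiring that a representation affording $\psi$, multiplied by $\mathcal P$ (equivalently, made honest via $\hat{\mathcal P}$), inflates to an irreducible representation of $\hat U$ whose character restricted to $\hat N$ is a multiple of $\hat\lambda$.

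Then I would verify the three defining conditions of a character triple isomorphism (Definition 11.23 of \cite{Is}) for the family $(\sigma_U)_{N \le U \le G}$: (i) each $\sigma_U$ is a bijection $\irr{U\mid\theta} \to \irr{\hat U/N \mid \hat\lambda}$ — this follows from Gallagher's theorem applied on both sides, since $\irr{U\mid\theta} = \{\beta\psi_0 : \beta \in \irr{U/N}\}$ for a fixed extension and likewise $\irr{\hat U/N\mid\hat\lambda}$ is a Gallagher coset over the fixed extension coming from $\tau$; (ii) compatibility with restriction, i.e. for $N \le V \le U \le G$ and $\psi \in \irr{U\mid\theta}$, $\chi \in \irr{V\mid\theta}$, the multiplicity $[\psi_V, \chi]$ equals $[\sigma_U(\psi)_{\hat V/N}, \sigma_V(\chi)]$ — this is where I expect the main work, and it is handled by the fact that restriction of projective representations with a fixed factor set commutes with the passage to $\hat G$, so multiplicities are literally preserved; (iii) compatibility with the central/linear twisting by $\irr{U/V}$ in the appropriate sense. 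Finally I would check that $\sigma_N(\theta) = \hat\lambda$, which is immediate from the normalization $\tau_N = \theta$.

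\medskip
\textbf{Main obstacle.} The genuine content — and the step I would be most careful with — is condition (ii), the preservation of restriction multiplicities $[\psi_V,\chi] = [\sigma_U(\psi)_{\hat V/N}, \sigma_V(\chi)]$ for all $N \le V \le U \le G$. Everything else (the subgroup correspondence, $\hat G$-invariance of $\hat\lambda$, the Gallagher bijections, the normalization) is bookkeeping. For (ii) the clean argument is that the correspondence $\sigma_U$ is, up to inflation along $\pi$, realized by tensoring an affording representation of $\psi$ with the single fixed projective representation $\mathcal P$ (restricted to the relevant subgroup); since one and the same $\mathcal P$ is used throughout, this operation manifestly commutes with restriction from $U$ to $V$ and sends honest intertwining operators to honest intertwining operators, so inner products are unchanged. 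I would phrase this by reducing to the known statement that $(G,N,\theta)$ is isomorphic, via $\mathcal P$, to the triple obtained from $\hat G$ and then transporting along the central quotient $\hat G \to \hat G/Z \cong$ (an isomorphic copy of the relevant data), invoking Theorem 11.28 of \cite{Is} for the hard multiplicity bookkeeping rather than reproving it.
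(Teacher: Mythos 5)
Your proposal is correct and takes essentially the same route as the paper: the paper's proof is simply a citation of Theorem 11.28 of \cite{Is} (or Corollary 5.9 of \cite{nbook2}), which is exactly where you defer the multiplicity bookkeeping, and your sketch of the maps $\sigma_U$ matches the paper's own description of the standard bijection $\psi^\pi=\sigma_U(\psi)\,\tau_{\hat U}$ via Gallagher. (Your worry that $\tau_{\hat U}$ might be reducible is unfounded, since $\tau_N=\theta$ is irreducible and $N\le \hat U$.)
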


\begin{proof}
See Theorem 11.28 of \cite{Is} or Corollary 5.9 of \cite{nbook2}.
\end{proof}

We shall frequently use how this character triple isomorphism
is constructed.  Let $\chi \in \irr{G|\theta}$.
We show how to construct $\chi^* \in \irr{\hat G/N|\hat\lambda}$.
Let $\pi: \hat G \rightarrow G$ be the homomorphism $(g,z) \mapsto g$, which has kernel $Z$.
Since $\pi$ induces an isomorphism $\hat G/Z \rightarrow G$, there is
a unique $\chi^{\pi} \in \irr{\hat G}$ such that $\chi^{\pi}(g,z)=\chi(g)$ for all $g \in G, z \in Z$.
Since $\chi$ lies over $\theta$ notice that $\chi^{\pi}$ lies over $\hat\theta=\theta \times 1_Z$,
and in particular over $\theta$.  Now by Theorem \ref{teogtilde}(b), the
character $\tau$ extends $\theta$.
By Gallagher's Corollary 6.17 of \cite{Is}, there exists a unique $\chi^* \in \irr{\hat G/N}$
such that $\chi^{\pi}=\chi^* \tau$. (Recall that we view the
characters of $H/N$ as characters of $H$ that contain $N$ in its kernel.)
Now, evaluating in $(1,z)$ for $z \in Z$, we easily check that $\chi^* \in \irr{\hat G/N|\hat\lambda}$.
The fact that $\chi \mapsto \chi^*$ defines an isomorphism of character triples
is the content of the proof of Theorem \ref{lemaisomchartriplgtilde}. (The
same construction can be done for every subgroup $N \le U \le G$ in place of $G$.)
Also, recall that $\hat\lambda(n,z)=z^{-1}$ for $n \in N$ and $z \in Z$.
\medskip

We say that the character triple $(\hat{G}/N,\hat{N}/N,\hat{\lambda})$ is a \textbf{standard isomorphic character triple} for $(G,N,\theta)$ given by $\mathcal P$, and that the bijective map
$^*: \irr{G|\theta} \rightarrow \irr{\hat G/N|\hat\lambda}$ that we have constructed, is the
$\textbf{standard bijection}$.

\medskip

\section{$\theta$-blocks}\label{secthetablocks}

If $(G,N,\theta)$ is a character triple, we are now ready to define the $\theta$-blocks
of $G$, and their $\theta$-defect groups.

\begin{defi}Let $(G,N,\theta)$ be a character triple.
Let $\hat G$ be a representation group
for $(G,N,\theta)$ and let  $\pi:\hat G \rightarrow G$ be the
canonical homomorphism $(g,z) \mapsto g$ with kernel $Z$. 
Let  $^*: \irr{G|\theta} \rightarrow
\irr{\hat G/N|\hat\lambda}$ be the associated standard
bijection. We say that a non-empty subset $B_\theta \subseteq{\rm Irr}(G|\theta)$
is a \textbf{$\theta$-block} of $G$
if there exists   a $p$-block $\hat{B}$ of $\hat{G}/N$
such that $$B_\theta^*=\{\chi^*\>|\> \chi\in B_\theta\}={\rm Irr}(\hat{B}|\hat\lambda)\, .$$
 If $\hat{D}/N$
is a defect group of $\hat{B}$, then
we say that $\pi(\hat D)/N$
is a {\bf $\theta$-defect group of $B_\theta$}. 

\end{defi}
 
Of course, 
note that the definition of $\theta$-blocks   depends on the choice of the standard isomorphic character triple and therefore on the choice of the projective representation associated with $\theta$. The same
happens with the $\theta$-defect groups.
Our main result in this section
is that $\theta$-blocks are in fact canonically defined, and that 
all the $\theta$-defect groups
are $G/N$-conjugate.
The following  result is the key to proving that.





\medskip

\begin{thm} Let $(G,N,\theta)$ be a character triple. Let $\mathcal{P}_1,\mathcal{P}_2$ be projective representations of $G$ associated with $\theta$, with factor sets $\alpha_1$ and $\alpha_2$, respectively,
whose values are roots of unity. 
Let $\hat{G}_i$ be the representation group associated with $\mathcal{P}_i$.
Let $(\hat{G}_1/N,\hat{N}_1/N,\hat\lambda_1)$ and $(\hat{G}_2/N,\hat{N}_2/N,\hat\lambda_2)$ be 
the standard isomorphic character triples given by $\mathcal{P}_1$ and $\mathcal{P}_2$,  respectively.
Let $\hat{G}=G\times Z_1\times Z_2$ and define the product
$$(g,z_1,z_2)(h,z_1',z_2')=(gh,\alpha_1(g,h)z_1z_1',\alpha_2(g,h)z_2z_2').$$

Then the following hold.

\begin{enumerate}[label=(\alph*)]
\item $\hat{G}$ is a finite group, $N\times 1\times 1$ is a normal subgroup of $\hat{G}$
(which we identify with $N$),
and $1 \times Z_1 \times Z_2$ is a central subgroup of $\hat G$
(which we identify with $Z_1 \times Z_2$). Also, $\hat{N}=N\times Z_1\times Z_2$ is a normal subgroup of $\hat{G}$ and $\hat{N}/N$ is central in $\hat{G}/N$.

\item The maps $\rho_1:\hat{G} \rightarrow \hat{G}_1$ and $\rho_2: \hat{G} \rightarrow \hat{G}_2$
given by $(g,z_1,z_2) \mapsto (g,z_1)$ and $(g,z_1,z_2) \mapsto (g,z_2)$ are surjective homomorphisms
with kernels $Z_2$ and $Z_1$, respectively.

\item
Suppose that $\tau_i \in \irr{\hat G_i}$ is the character associated with ${\mathcal P}_i$,
and let $\tau_i^{\rho_i} \in \irr{\hat G}$ be the corresponding character of $\hat G$.
Then there exists a linear character $\beta \in \irr{\hat G/N}$ such that
$$\tau_1^{\rho_1}=\beta \tau_2^{\rho_2} \, .$$

\item Let $\chi\in{\rm Irr}(G|\theta)$ and let $\chi_i^*\in \irr{\hat{G}_i/N|\hat{\lambda}_i}$ be
the image of $\chi$ under the standard bijection.
Let $\hat{\chi}_i=(\chi_i^*)^{\rho_i}\in{\rm Irr}(\hat{G}/N)$.
 Then  $\beta\hat{\chi}_1=\hat{\chi}_2$.  As a consequence, if $\hat{B}_i$ is the block 
 of $\hat{G}/N$ containing $\hat{\chi}_i$, then $\hat{B}_2=\beta\hat{B}_1$.
\item Let $B_i^*$ be the block of $\hat{G}_i/N$ containing $\chi_i^*$. Then the map 
$\psi\mapsto  \psi^{\rho_i}$ is a bijection from      
${\rm Irr}(B_i^*|\hat\lambda_i)$ to ${\rm Irr}(\hat{B}_i|\tilde{\lambda}_i)$, where  
$\tilde{\lambda}_1(n,z_1,z_2)=\hat\lambda_1(1,z_1)=z_1^{-1}$ and $\tilde{\lambda}_2(n,z_1,z_2)=\hat\lambda_2(1,z_2)=z_2^{-1}$
are linear characters of $\hat N/N$. 
 \item The map $ \psi \mapsto\beta \psi $ is a bijection from ${\rm Irr}(\hat{B}_1|\tilde{\lambda}_1)$ to ${\rm Irr}(\hat{B}_2|\tilde{\lambda}_2)$. In particular, $|{\rm Irr}(B_1^*|\hat\lambda_1)|= |{\rm Irr}(B_2^*|\hat\lambda_2)|$.
 
 \item Let  $\pi_i:\hat G_i \rightarrow G$ be the
canonical homomorphism $(g,z_i) \mapsto g$ with kernel $Z_i$.
 If $\hat D_i/N$ is defect group of $B_i^*$, then $\pi_1(\hat D_1)$ and $\pi_2(\hat D_2)$
 are $G$-conjugate.
\end{enumerate}
\label{lemaghat}
\end{thm}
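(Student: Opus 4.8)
The theorem assembles a "common roof" $\hat G = G \times Z_1 \times Z_2$ that maps onto both $\hat G_1$ and $\hat G_2$ via the projections $\rho_i$, with the kernels being the "other" central factor. The idea is that everything about $\theta$-blocks computed through $\hat G_1$ can be transported up to $\hat G/N$ and then down to $\hat G_2$, and along the way only a linear character twist $\beta$ and the central-kernel block correspondences (Corollary \ref{sameblock} and Lemma \ref{lemaliftblock}) intervene — none of which change the relevant invariants. I would prove the parts in the order (a)–(g) as stated, since each is used in the next.

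For (a) and (b): the multiplication is exactly the one in Theorem \ref{teogtilde} applied coordinatewise, so associativity and the group axioms are the same verification as there (using Lemma \ref{lemafactorsets}(a) for the identity $(1,1,1)$); that $N\times1\times1$ is normal and $1\times Z_1\times Z_2$ is central follows from $\alpha_i(n,g)=\alpha_i(g,n)=1$, again Lemma \ref{lemafactorsets}(a); and $\hat N/N$ central in $\hat G/N$ is then immediate. The maps $\rho_i$ are homomorphisms precisely because the $i$-th coordinate of the product law in $\hat G$ is the product law of $\hat G_i$; surjectivity and the stated kernels are clear. For (c): both $\tau_1^{\rho_1}$ and $\tau_2^{\rho_2}$ lie in $\irr{\hat G}$ and restrict to $\hat G$'s copy of $N$ as $\theta$ (by Theorem \ref{teogtilde}(b), $\tau_i$ extends $\theta$, and $\rho_i$ is the identity on $N$). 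Since $\theta$ is $\hat G$-invariant and extends to $\tau_2^{\rho_2}$, Gallagher's Corollary 6.17 of \cite{Is} gives a unique $\beta\in\irr{\hat G/N}$ with $\tau_1^{\rho_1}=\beta\,\tau_2^{\rho_2}$; $\beta$ is linear because $\tau_1^{\rho_1}(1)=\theta(1)=\tau_2^{\rho_2}(1)$. For (d): unravel the definition of the standard bijection. By construction $\chi^\pi = \chi^{\pi_i\circ\rho_i}$ equals $\hat\chi_i\,\tau_i^{\rho_i}$ inside $\irr{\hat G}$, where I use that $\pi = \pi_i\circ\rho_i$ and that $(\chi_i^*\tau_i)^{\rho_i} = \hat\chi_i\,\tau_i^{\rho_i}$. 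Comparing the two expressions $\hat\chi_1\tau_1^{\rho_1} = \chi^\pi = \hat\chi_2\tau_2^{\rho_2}$ and substituting $\tau_1^{\rho_1}=\beta\tau_2^{\rho_2}$ gives $\beta\hat\chi_1 = \hat\chi_2$ by uniqueness in Gallagher's correspondence; then $\hat B_2 = \beta\hat B_1$ by Lemma \ref{lemamub1}.

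For (e): $\rho_i$ is an onto homomorphism of $\hat G/N$-type groups with central kernel $Z_j/N$ ($j\neq i$) — more precisely $\bar\rho_i:\hat G/N \to \hat G_i/N$ has central kernel — so Corollary \ref{sameblock}(a) says $\psi\mapsto\psi^{\rho_i}$ preserves "same block", hence maps $\irr{B_i^*}$ into $\irr{\hat B_i}$; that it restricts to a bijection $\irr{B_i^*|\hat\lambda_i}\to\irr{\hat B_i|\tilde\lambda_i}$ is a check on the action on the central subgroup (the point being $\tilde\lambda_i$ is just $\hat\lambda_i$ pulled back along $\rho_i$, and $\rho_i$ is injective on the relevant central factor $Z_i/N$), using Corollary \ref{sameblock}(b) for constituents over a subgroup if needed. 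For (f): immediate from (d) via Lemma \ref{lemamub1} once one checks $\beta$ sends characters over $\tilde\lambda_1$ to characters over $\tilde\lambda_2$ — this needs the explicit value of $\beta$ on $\hat N/N$, which one extracts by evaluating $\tau_1^{\rho_1}=\beta\tau_2^{\rho_2}$ at $(1,z_1,z_2)$: there $\tau_i^{\rho_i}(1,z_1,z_2)=z_i\theta(1)$, so $\beta(1,z_1,z_2)=z_1z_2^{-1}\cdot$(constant), giving exactly $\beta_{\hat N/N} = \tilde\lambda_1^{-1}\tilde\lambda_2$, whence $\beta$ matches the two $\hat\lambda$-conditions. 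The final count follows by composing (e) and (f). For (g): $\hat D_i/N$ a defect group of $B_i^*$; by Corollary \ref{sameblock}(c) applied to $\bar\rho_i$, a defect group of $\hat B_i$ (a block of $\hat G/N$) is $\bar\rho_i^{-1}$... — rather, since $\bar\rho_i$ has central kernel, Lemma \ref{lemaliftblock}(b) / Corollary \ref{sameblock}(c) gives that some defect group $\hat E_i/N$ of $\hat B_i$ satisfies $\bar\rho_i(\hat E_i/N) = \hat D_i/N$, i.e. we may choose defect groups of $\hat B_1$ and $\hat B_2$ that map onto $\hat D_1/N$ and $\hat D_2/N$ respectively under $\bar\rho_1,\bar\rho_2$. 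By (d), $\hat B_2 = \beta\hat B_1$, and by Lemma \ref{lemamub1} multiplication by a linear character does not change defect groups, so $\hat B_1$ and $\hat B_2$ have literally the same defect groups in $\hat G/N$. Pick one such $\hat E/N$; then $\bar\rho_1(\hat E/N)$ and $\bar\rho_2(\hat E/N)$ are conjugate to $\hat D_1/N$ and $\hat D_2/N$ in $\hat G_1/N$ and $\hat G_2/N$ respectively. Finally identify $\hat G_i/N$ with a section of $G$ and push $\pi_i$ through: since $\pi = \pi_i\circ\rho_i$ carries the copy of $\hat E$ to a single subgroup of $G$ independent of $i$, we conclude $\pi_1(\hat D_1)$ and $\pi_2(\hat D_2)$ are $G$-conjugate.

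**Main obstacle.** Parts (a)–(d) are essentially bookkeeping on top of Theorem \ref{teogtilde} and Gallagher. The delicate point is keeping the three central layers straight in (e)–(g): one is passing between blocks of $\hat G_i$, blocks of $\hat G_i/N$, blocks of $\hat G$, and blocks of $\hat G/N$, and matching up four different linear characters ($\hat\lambda_1,\hat\lambda_2,\tilde\lambda_1,\tilde\lambda_2$) and the twist $\beta$ on the various central factors. The real content — and the step I expect to require the most care — is the defect-group tracking in (g): one must verify that the central-quotient maps $\bar\rho_i$ and $\pi_i$ interact correctly with defect groups (invoking Lemma \ref{lemaliftblock}(b) at the right level, where the kernel really is a central $p$-group after stripping off the $p'$-part, or reducing to that case), and that after the $\beta$-twist the two defect groups descend to a genuinely $G$-conjugate (not merely abstractly isomorphic) pair of subgroups of $G$. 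Everything else is a routine, if bookkeeping-heavy, application of the lemmas already proved.
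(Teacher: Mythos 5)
Your proposal is correct and follows essentially the same route as the paper: parts (a), (b) via Lemma \ref{lemafactorsets}, part (d) by comparing $\hat\chi_1\tau_1^{\rho_1}=\chi^\pi=\hat\chi_2\tau_2^{\rho_2}$ and cancelling via Gallagher, parts (e)--(g) via Corollary \ref{sameblock} applied to the central-kernel maps $\tilde\rho_i$, the shared defect group of $\hat B_1$ and $\hat B_2=\beta\hat B_1$, and the identity $\tilde\pi_1\circ\tilde\rho_1=\tilde\pi_2\circ\tilde\rho_2$. The one small divergence is in (c)/(f): the paper produces $\beta$ explicitly by writing $\mathcal P_2=\xi\mathcal P_1$ (Theorem 11.2 of \cite{Is}) and setting $\beta(g,z_1,z_2)=z_1z_2^{-1}\xi(g)^{-1}$, whereas you get $\beta$ abstractly from Gallagher applied to the two extensions $\tau_i^{\rho_i}$ of $\theta$ and then recover $\beta_{\hat N}=\tilde\lambda_1^{-1}\tilde\lambda_2$ by evaluating the identity on $\hat N$ --- both are valid and yield the same verification in (f).
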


\begin{proof} Using Lemma \ref{lemafactorsets}, parts (a) and (b) are straightforward.
We prove (c). Since $\mathcal{P}_1$ and $\mathcal{P}_2$ are projective representations of $G$ associated to $\theta$, by Theorem 11.2 of \cite{Is} we know that there exists $\xi:G\rightarrow\mathbb{C}^\times$ with $\xi(1)=1$,
constant on the cosets of $N$, such that $\mathcal{P}_2=\xi\mathcal{P}_1,$ and the factor sets $\alpha_1$ and $\alpha_2$ are related in this way

$$\alpha_2(g,h)=\alpha_1(g,h)\xi(g)\xi(h)\xi(gh)^{-1}.$$



Now $\tau_i\in{\rm Irr}(\hat{G}_i)$ is the character afforded by the irreducible representation $\hat{\mathcal{P}_i}$,
which is defined by $\hat{\mathcal{P}_i}(g,z_i)=z_i\mathcal{P}_i(g),$ for $z_i\in Z_i$ and $g\in G$. Then, using that $\mathcal{P}_2=\xi\mathcal{P}_1,$ we have that

\begin{align*}
\tau_1(g,z_1)=z_1z_2^{-1}\xi(g)^{-1}\tau_2(g,z_2)
\label{star}
\end{align*}
for $g \in G$ and $z_i \in Z_i$.
It is straightforward to prove that the function $\beta:\hat{G}\rightarrow\mathbb{C}^\times$ defined by $$\beta(g,z_1,z_2)=z_1z_2^{-1}\xi(g)^{-1}$$ is a linear character of $\hat G$
that contains $N$ in its kernel.

By definition, we have that   
  $\tau_1^{\rho_1}(g,z_1,z_2)=\tau_1(g,z_1)$
and $\tau_2^{\rho_2}(g,z_1,z_2)=\tau_2(g,z_2)$.
Therefore $\tau_1^{\rho_1}=\beta\tau_2^{\rho_2}$, as desired. This proves (c).

Let us denote by $\pi_i:\hat{G}_i \rightarrow G$ the
homomorphism $(g,z_i) \mapsto g$. Recall that, by definition,   $\chi_i^* \in \irr{\hat{G}_i/N}$ is the unique
character satisfying $\chi^{\pi_i}=\chi_i^* \tau_i$.
That is $$\chi(g)=\chi^{\pi_i}(g,z_i)=\chi_i^*(g,z_i)\tau_i(g,z_i)$$ for $g \in G$ and $z_i \in Z_i$.
By definition,
$\hat\chi_1(g,z_1,z_2)=\chi_1^*(g,z_1)$ and $\hat\chi_2(g,z_1,z_2)=\chi_2^*(g,z_2)$.
In particular, $\hat\chi_i \in \irr{\hat G}$ contains $N$ in its kernel.
Notice that we have that 
$\tau_1^{\rho_1}\hat{\chi}_1=\tau_2^{\rho_2}\hat{\chi}_2$.
Hence, 
$$\beta\hat\chi_1 \tau_2^{\rho_2}=\hat\chi_2 \tau_2^{\rho_2} \, .$$ 
Since $\tau_2^{\rho_2}$ extends $\theta\in{\rm Irr}(N)$
and $\beta\hat\chi_1,\hat\chi_2 \in \irr{\hat G/N}$, by Gallagher's Corollary 6.18 of \cite{Is}, we have that 
$$\beta\hat{\chi}_1=\hat{\chi}_2 \, .$$
Using Lemma \ref{lemamub1},     part (d) easily follows.





Next we prove part (e). Since $\rho_1(N)=N$,
then $\rho_1$ uniquely defines an onto homomorphism
$\tilde{\rho_1}:\hat G/N \rightarrow \hat G_1/N$ with
kernel $NZ_2/N \subseteq \zent{\hat G/N}$. Since $N \sbs \ker{\chi_1^*}$,
then notice that $\hat\chi_1=(\chi_1^*)^{\tilde\rho_1}$.  
Now $NZ_1/N$ is a subgroup of $\hat G_1/N$, and its inverse image under
$\tilde\rho_1$ is $\hat{N}/N$. Also, the character corresponding to $\hat\lambda_1$ under
$\tilde\rho_1$ is $\tilde\lambda_1$. By Corollary \ref{sameblock} (a) and (b),
we have that $\psi\mapsto  \psi^{\rho_i}$ is a bijection from      
${\rm Irr}(B_i^*|\hat\lambda_i)$ to ${\rm Irr}(\hat{B}_i|\tilde{\lambda}_i)$. 
(Notice that $ \psi^{\rho_i}= \psi^{\tilde\rho_i}$ because all of our characters have $N$ in their kernel).

Now we prove part (f). By using their definitions (and the fact  that $\xi(n)=1$ for $n \in N$),
we check that  $\beta_{\hat{N}}\tilde\lambda_1= \tilde\lambda_2$ . Therefore,
multiplication by the linear character $\beta$ sends bijectively
$\irr{\hat B_1|\tilde\lambda_1} \rightarrow \irr{\hat B_2|\tilde\lambda_2}$.

Finally, we prove part (g). As in part (e),
we have that   $\tilde{\rho_i}:\hat G/N \rightarrow \hat G_i/N$ 
is an onto homomorphism, with
central kernel, such that the map $\psi\mapsto  \psi^{\tilde\rho_i}$ is a bijection from      
${\rm Irr}(B_i^*|\lambda_i)$ to ${\rm Irr}(\hat{B}_i|\tilde{\lambda}_i)$.
Let $E_i/N$ be a defect group of $\hat{B}_i$. 
Since $\hat B_2=\beta\hat B_1$, we may assume that $E_i=E$ for $i=1,2$.
by Lemma \ref{lemamub1}. By Corollary \ref{sameblock}(c),
we have that $\tilde\rho_i(E/N)$ is a defect group of $B_i^*$.
Hence $\tilde\rho_i(E/N)=(\hat D_i/N)^{(g_i,1)}$ for some $g_i \in G$
(using that $Z_i$ is central in $\hat G_i$).
Now, since $\pi_i(N)=N$, we have that $\pi_i$ uniquely determines
an onto homomorphism $\tilde\pi_i: \hat G_i/N \rightarrow G/N$.
We easily check that
$\tilde \pi_1 \circ \tilde\rho_1= \tilde \pi_2 \circ \tilde\rho_2$.
Then 
$$\pi_1(\hat D_1)^{g_1}=\pi_2(\hat D_2)^{g_2} \, ,$$
as desired.
 \end{proof}

We can now prove the main result of this section. 

\begin{thm} Suppose that $N \nor G$, and $\theta \in \irr N$ is
$G$-invariant. Then the $\theta$-blocks of $G$ are well defined.
Furthermore, the
set of $\theta$-defect groups is a $G/N$-conjugacy class
of $p$-subgroups of $G/N$.
\label{thetablockswelldef}
\end{thm}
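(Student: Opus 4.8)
The plan is to show that for any two projective representations $\mathcal{P}_1, \mathcal{P}_2$ of $G$ associated with $\theta$ (with factor sets of finite order), the resulting $\theta$-blocks and $\theta$-defect groups coincide, and then invoke the fact that every character triple admits such a projective representation by the theorem quoted after Lemma \ref{lemafactorsets}. All the real work has been packaged into Theorem \ref{lemaghat}, so the argument here is a matter of correctly unwinding the notation. First I would fix $\mathcal{P}_1, \mathcal{P}_2$ and form the common central extension $\hat{G} = G \times Z_1 \times Z_2$ of Theorem \ref{lemaghat}, together with the surjections $\rho_i: \hat{G} \to \hat{G}_i$ and the standard bijections $^*_i: \irr{G|\theta} \to \irr{\hat{G}_i/N|\hat{\lambda}_i}$.

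The key chain of bijections runs as follows. Suppose $B_\theta \subseteq \irr{G|\theta}$ is a $\theta$-block computed with $\mathcal{P}_1$; thus there is a $p$-block $B_1^*$ of $\hat{G}_1/N$ with $(B_\theta)^{*_1} = \irr{B_1^*|\hat{\lambda}_1}$. I want to produce a $p$-block $B_2^*$ of $\hat{G}_2/N$ with $(B_\theta)^{*_2} = \irr{B_2^*|\hat{\lambda}_2}$. Pick $\chi \in B_\theta$. By Theorem \ref{lemaghat}(e), $\psi \mapsto \psi^{\rho_1}$ carries $\irr{B_1^*|\hat{\lambda}_1}$ bijectively onto $\irr{\hat{B}_1|\tilde{\lambda}_1}$, where $\hat{B}_1 = \bl((\chi_1^*)^{\rho_1})$ is a block of $\hat{G}/N$. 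By Theorem \ref{lemaghat}(d) and (f), multiplication by the linear character $\beta$ sends $\hat{B}_1$ to $\hat{B}_2 := \beta\hat{B}_1 = \bl((\chi_2^*)^{\rho_2})$ and restricts to a bijection $\irr{\hat{B}_1|\tilde{\lambda}_1} \to \irr{\hat{B}_2|\tilde{\lambda}_2}$. Finally, again by Theorem \ref{lemaghat}(e) applied to the index $2$, $\psi \mapsto \psi^{\rho_2}$ identifies $\irr{B_2^*|\hat{\lambda}_2}$ with $\irr{\hat{B}_2|\tilde{\lambda}_2}$, where $B_2^*$ is the block of $\hat{G}_2/N$ containing $\chi_2^*$. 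Composing, and using that Theorem \ref{lemaghat}(d) gives $\beta (\chi_1^*)^{\rho_1} = (\chi_2^*)^{\rho_2}$ for \emph{every} $\chi \in \irr{G|\theta}$, one checks that the composite bijection $\irr{B_1^*|\hat{\lambda}_1} \to \irr{B_2^*|\hat{\lambda}_2}$ is precisely $\chi_1^* \mapsto \chi_2^*$, i.e. it agrees with $*_2 \circ (*_1)^{-1}$. Hence $(B_\theta)^{*_2} = \irr{B_2^*|\hat{\lambda}_2}$, so $B_\theta$ is also a $\theta$-block with respect to $\mathcal{P}_2$. By symmetry the two notions of $\theta$-block coincide, which is the well-definedness assertion.

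For the statement about $\theta$-defect groups, let $D_\theta/N = \pi_1(\hat{D}_1)/N$ be a $\theta$-defect group of $B_\theta$ computed via $\mathcal{P}_1$, where $\hat{D}_1/N \in \delta(B_1^*)$. Theorem \ref{lemaghat}(g) says exactly that $\pi_1(\hat{D}_1)$ and $\pi_2(\hat{D}_2)$ are $G$-conjugate, where $\hat{D}_2/N$ is a (suitably chosen) defect group of $B_2^*$; since these subgroups contain $N$, they are $G/N$-conjugate after passing to the quotient. Combined with the fact that within a single choice $\mathcal{P}_i$ the defect groups of the block $B_i^*$ of $\hat{G}_i/N$ already form one conjugacy class (standard Brauer theory, Corollary 4.5 / page 82 of \cite{nbook}) and that $\pi_i$ is a homomorphism, this shows that the full collection of $\theta$-defect groups of $B_\theta$, over all choices, is a single $G/N$-conjugacy class of $p$-subgroups of $G/N$. (That they are $p$-subgroups is immediate: $\hat{D}_i/N$ is a $p$-group and $\pi_i$ maps it onto $\pi_i(\hat{D}_i)/N$.)

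The only genuinely delicate point is the bookkeeping in the previous paragraph: one must verify that the composite of the three bijections in Theorem \ref{lemaghat}(e),(f),(e) really is $\chi_1^* \mapsto \chi_2^*$ and not some other bijection between the same sets — this is where the identity $\beta (\chi_1^*)^{\rho_1} = (\chi_2^*)^{\rho_2}$ from part (d), valid simultaneously for all $\chi$, does the essential work. Everything else is a routine diagram chase through the maps already constructed in Theorem \ref{lemaghat}, so I would write that verification out carefully and keep the rest brief.
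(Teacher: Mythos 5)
Your proposal is correct and follows essentially the same route as the paper: both arguments reduce everything to Theorem \ref{lemaghat}, compose the bijections of parts (e), (f), (e) through the common central extension $\hat G$, and use the identity $\beta(\chi_1^*)^{\rho_1}=(\chi_2^*)^{\rho_2}$ of part (d) to see that the composite is $*_2\circ(*_1)^{-1}$ (the paper carries out this last verification as an explicit pointwise evaluation $\varphi_1(g)=\cdots=\varphi_2(g)$, and phrases well-definedness as ``two candidate $\theta$-blocks meeting in a character coincide,'' but the substance is identical), with the defect-group statement coming from part (g) in both cases.
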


\begin{proof}

Let $(G,N,\theta)$ be a character triple and let $\mathcal{P}_1$ and $\mathcal{P}_2$ be projective representations associated with $\theta$. Let $(\hat{G}_1/N,\hat{N}_1/N,\hat\lambda_1)$ and $(\hat{G}_2/N,\hat{N}_2/N,\hat\lambda_2)$ be 
the standard isomorphic character triples given by $\mathcal{P}_1$ and $\mathcal{P}_2$ respectively.
Let $\pi_i:\hat G_i \rightarrow G$ be the homomorphism $\pi_i(g,z_i)=g$,
and let $\tau_i \in \irr{\hat G_i}$ be the character associated with $\mathcal{P}_i$.
Recall that if $\chi \in \irr{G|\theta}$, then $\chi^{\pi_i}=\chi_i^*\tau_i$,
for some uniquely defined $\chi_i^* \in \irr{\hat G_i/N}$.
The map $\chi \mapsto \chi_i^*$ from $\irr{G|\theta} \rightarrow \irr{\hat G_i/N|\hat\lambda_i}$
is the standard bijection.
 
   Let $A_1,A_2\subseteq{\rm Irr}(G|\theta)$ be such that $A_1^*=
   \{\varphi_1^*  \mid   \varphi \in A_1\}={\rm Irr}(B_1^*|\hat\lambda_1)$ and $A_2^*=
   \{\varphi_2^*  \mid  \varphi \in A_2\}={\rm Irr}(B_2^*|\hat\lambda_2),$ where $B^*_i$ is a block
  of $\hat G_i/N$. Suppose that $ \chi \in A_1 \cap A_2$.
  We wish to 
  to prove that $A_1=A_2$. 
  
  In order to do so, we construct the group $\hat G$ in Theorem \ref{lemaghat},
 and consider the group homomorphisms $\rho_i:\hat G \rightarrow \hat G_i$,
 in Theorem  \ref{lemaghat}(b). 
 By part (c) of this theorem, there is a linear character $\beta \in \irr{\hat G/N}$
 satisfying
  $$\tau_1^{\rho_1}=\beta \tau_2^{\rho_2} \, .$$
  As in Theorem \ref{lemaghat}(d), let
  $\hat{\chi}_i=(\chi_i^*)^{\rho_i}\in{\rm Irr}(\hat{G}/N)$,
  and let $\hat B_i$ be the block of $\hat G/N$ containing $\hat{\chi_i}$.
  By Theorem \ref{lemaghat}(d), 
  we have that $\hat B_2=\beta\hat B_1$.
  By Theorem \ref{lemaghat}(f), $|A_1^*|=|A_2^*|$, and therefore
  $|A_1|=|A_2|$.  We only need to prove that $A_1\subseteq A_2$, for instance.

 Let $\varphi_1\in A_1$. 
 Now, $\varphi_1^* \in A_1^*={\rm Irr}(B_1^*|\hat\lambda_1)$,
 and by Theorem \ref{lemaghat}(e) we have that $\hat\varphi_1=(\varphi_1^*)^{\rho_1}\in{\rm Irr}(\hat{B_1}|\tilde\lambda_1)$. By Theorem \ref{lemaghat}(f), $\beta\hat{\varphi_1}\in{\rm Irr}(\hat{B}_2|\tilde{\lambda}_2)$. By Theorem \ref{lemaghat}(e), let $\varphi_2\in A_2$ be such that $\beta\hat{\varphi_1}=(\varphi_2^*)^{\rho_2}$. We claim that $\varphi_1=\varphi_2$.   Recall that $\tau_i\varphi_i^*=\varphi_i^{\pi_i}$and that
 $\tau_1^{\rho_1}=\beta\tau_2^{\rho_2}$.  If $g \in G$, then we have that

\begin{align*}
\varphi_1(g)&=\varphi^{\pi_1}_1(g,1)=\tau_1(g,1)\varphi_1^*(g,1)\\
&=\tau_1^{\rho_1}(g,1,1)\hat{\varphi}_1(g,1,1)\\
&=\beta(g,1,1)\tau_2^{\rho_2}(g,1,1)\hat{\varphi}_1(g,1,1)\\
&=\tau_2(g,1)(\beta\hat{\varphi}_1)(g,1,1)\\
&=\tau_2(g,1)(\varphi_2^*)^{\rho_2}(g,1,1)\\
&=\tau_2(g,1)\varphi_2^*(g,1)\\
&={\varphi}_2^{\pi_2}(g,1)\\
&=\varphi_2(g),
\end{align*}
as desired. This completes the proof of the first part of the theorem.
The second part easily follows from Theorem \ref{lemaghat}(g).
It is elementary to show that the $\theta$-defect groups are $p$-subgroups of $G/N$.
\end{proof}


\section{Properties of $\theta$-blocks}\label{secproperties}
The following gives us some key properties of $\theta$-blocks.
 
\begin{thm}\label{properties} Let $(G,N,\theta)$ be a character triple.
Let $B_\theta$ be a $\theta$-block of $G$, and let $D_\theta/N$
be a $\theta$-defect group of $B_\theta$.
\begin{enumerate}[label=(\alph*)]
 
 \item
 There is
a $p$-block $B$ of $G$ such that
$B_\theta$ is contained in the set ${\rm Irr}(B|\theta)$.
Also, there is a defect group $D$ of  $B$ such that $D_\theta \subseteq DN$.

\item
If $N \sbs \zent G$, then there is a $p$-block
$B$ of $G$ and a defect group $D$ of $B$ such that   $B_\theta=\irr{B|\theta}$,
and $D_\theta=DN$.

\item
If $\theta$ has an extension $\chi \in \irr G$, then there
is a $p$-block $\bar B$ of $G/N$
and a defect group $DN/N$ of $\bar B$
 such that $B_\theta=\{ \gamma \chi \mid \gamma \in \irr{\bar B}\}$
 and $D_\theta/N=DN/N$.
 
 \item
 If $G/N$ is a $p$-group, then $B_\theta=\irr{G|\theta}$ and $D_\theta/N=G/N$.
\end{enumerate}
\end{thm}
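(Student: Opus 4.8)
The plan is to deduce all four parts from the definitions together with Theorem~\ref{lemaghat} and the elementary block-theoretic lemmas of Section~\ref{secreviewpblocks}. Fix a representation group $\hat G$ for $(G,N,\theta)$ associated with some projective representation $\mathcal P$, with kernel $Z$ of $\pi:\hat G\to G$, character $\tau\in\irr{\hat G}$, and linear $\hat\lambda\in\irr{\hat N}$ as in Theorem~\ref{teogtilde}, and let $^*:\irr{G|\theta}\to\irr{\hat G/N|\hat\lambda}$ be the standard bijection. By definition $B_\theta^*=\irr{\hat B|\hat\lambda}$ for a $p$-block $\hat B$ of $\hat G/N$ with defect group $\hat D/N$, and $D_\theta/N=\pi(\hat D)/N$.

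For (a): recall $\chi^{\pi}=\chi^*\tau$ for $\chi\in\irr{G|\theta}$, so by Corollary~\ref{sameblock}(a) the map $\chi\mapsto\chi^\pi$ sends the block of $\chi$ in $G$ to the block of $\chi^\pi$ in $\hat G$; and by Lemma~\ref{lema} (applied in $\hat G$ with normal subgroup $N$, invariant character $\theta$, and extension $\tau$), the characters $\chi^\pi=\chi^*\tau$ with $\chi^*$ in a fixed block $\hat B$ of $\hat G/N$ all lie in one $p$-block $\hat C$ of $\hat G$; pulling back along $\pi$ gives a single block $B$ of $G$ containing $B_\theta$, and clearly $B_\theta\subseteq\irr{B|\theta}$. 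For the defect groups: Lemma~\ref{lema} also gives $\hat D\subseteq \hat E N$ for some defect group $\hat E$ of $\hat C$; applying $\pi$ and using Corollary~\ref{sameblock}(c) (so $\pi(\hat E)$ is a defect group $D$ of $B$) yields $D_\theta=\pi(\hat D)N/N\subseteq \pi(\hat E N)/N= DN/N$, i.e.\ $D_\theta\subseteq DN$ as needed.

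For (b): when $N\subseteq\zent G$, Theorem~\ref{teogtilde}(a) gives $\hat N\subseteq\zent{\hat G}$, so $\hat N/N$ is a central $p'$-by-$p$ extension inside $\hat G/N$ --- in fact $\hat N/N\cong Z$ is cyclic, but the key point is that it is central. I would first reduce, via Lemma~\ref{lemaliftblock} and Lemma~\ref{sameblock}, to comparing blocks of $\hat G/N$ lying over $\hat\lambda$ with blocks of $G\cong \hat G/\hat N$: since $\hat\lambda$ is a linear character of the central subgroup $\hat N/N$ of $\hat G/N$, every $p$-block $\hat B$ of $\hat G/N$ covers a unique block of the quotient $(\hat G/N)/(\hat N/N)\cong G$ once we restrict attention to $\hat\lambda$-parts (using Lemma~\ref{lemaliftblock}(a) after splitting off the $p'$-part of $\hat N/N$ as in Lemma~\ref{sameblock}), and conversely. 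Chasing $\chi^*\tau=\chi^\pi$ back to $G$ through this correspondence identifies $B_\theta$ with $\irr{B|\theta}$ for the corresponding block $B$ of $G$, and Lemma~\ref{lemaliftblock}(b) together with Corollary~\ref{sameblock}(c) turns $\hat D/N$ into a defect group of that block of $G$, which pulls back to $D_\theta=DN$.

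For (c): if $\theta$ extends to $\chi_0\in\irr G$, then one may choose $\mathcal P$ to be an ordinary representation affording $\chi_0$, so that $\alpha$ is trivial, $Z=1$, $\hat G=G$, $\hat N=N$, $\tau=\chi_0$, and $\hat\lambda=1_N$; then the standard bijection is just Gallagher: $\chi^*=$ the element $\gamma\in\irr{G/N}$ with $\chi=\gamma\chi_0$. Now $B_\theta^*=\irr{\bar B}$ for a block $\bar B$ of $G/N$, so $B_\theta=\{\gamma\chi_0\mid\gamma\in\irr{\bar B}\}$; and a defect group of $\bar B$ has the form $DN/N$ with $D$ a defect group of $\bl(\gamma\chi_0)$ by Lemma~\ref{lema}, giving $D_\theta/N=DN/N$. (If one insists on a fixed $\mathcal P$ rather than this convenient one, invoke Theorem~\ref{thetablockswelldef}.) Part (d) is the special case of (c): if $G/N$ is a $p$-group and $\theta$ extends --- which one must first note it does, since $G/N$ being a $p$-group forces $\theta$ to extend by the standard argument (e.g.\ the obstruction in $H^2(G/N,\CC^\times)_p$ vanishes because $G/N$ is a $p$-group and $\theta$ is invariant; more simply, $\gcd(|G/N|,o(\beta_\theta))$ considerations, or just: a $G$-invariant $\theta$ extends to its inertia-type obstruction group) --- then $G/N$ has a unique $p$-block, whose unique defect group is $G/N$ itself, so $B_\theta=\irr{G|\theta}$ and $D_\theta/N=G/N$. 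The main obstacle I anticipate is part (b): keeping careful track of which characters lie over $\hat\lambda$ as one passes through the chain $\hat G/N \to (\hat G/N)/(\text{$p'$-part of }\hat N/N)\to G$ and verifying that the block-and-defect-group correspondences of Lemma~\ref{lemaliftblock} and Corollary~\ref{sameblock} compose correctly with the standard bijection; the other parts are essentially bookkeeping on top of (a).
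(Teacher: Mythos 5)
Your parts (a) and (c) follow the paper's argument essentially verbatim (Lemma \ref{lema} applied in $\hat G$ with the extension $\tau$, then Corollary \ref{sameblock} to descend to $G$; and for (c) the choice of $\mathcal P$ affording the extension, with Theorem \ref{thetablockswelldef} justifying that choice). The problems are in (b) and (d).

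In (b) your structural identification is wrong: $(\hat G/N)/(\hat N/N)\cong \hat G/\hat N\cong G/N$, not $G$, since $\hat N=N\times Z$ and $G\cong\hat G/Z$. So the chain of quotients you propose never reaches $G$, and "chasing $\chi^*\tau=\chi^\pi$ back to $G$ through this correspondence" has no correspondence to chase through. The correct bridge is the group $\hat G$ itself, approached from two different central quotients: given $\gamma\in\irr{B|\theta}$, lift to $\gamma^\pi=\gamma^*\tau$ and use Corollary \ref{sameblock}(a) (quotient by $Z$) to see that $\gamma^\pi$ and $\chi^\pi$ share a block of $\hat G$; then use that $\tau$ is \emph{linear} (because $N$ is central, so $\theta$ is linear and $\tau$ extends it) together with Lemma \ref{lemamub1} to strip off $\tau$ and conclude $\gamma^*,\chi^*$ share a block of $\hat G$; finally apply Lemma \ref{lemaliftblock}(a) to the central subgroup $N$ of $\hat G$ to descend to $\hat G/N$. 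The linearity of $\tau$ and Lemma \ref{lemamub1}, which you never invoke in (b), are the essential ingredients; the analogous use of Lemma \ref{lemaliftblock}(b) handles the defect groups.

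Part (d) rests on a false claim: a $G$-invariant $\theta\in\irr N$ with $G/N$ a $p$-group need \emph{not} extend to $G$. Take $G$ extraspecial of order $p^3$, $N=\zent G$ and $\theta$ faithful linear: every $\chi\in\irr{G|\theta}$ has degree $p$. The cohomological obstruction lies in the $p$-part of the Schur multiplier of $G/N$ and there is no reason for it to vanish. So (d) cannot be reduced to (c). The paper instead proves (d) inside the representation group: since $\hat N/N$ is central in $\hat G/N$ with $p$-group quotient, there is a unique block of $\hat G/N$ lying over $\hat\lambda$ (Theorem 9.2 and Corollary 9.6 of \cite{nbook}), giving $B_\theta=\irr{G|\theta}$; and a defect-class argument (the defect class is $p$-regular, hence lies in the central $p'$-part of $\hat N/N$, forcing the defect group to be a full Sylow $p$-subgroup) gives $D_\theta/N=G/N$. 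You need an argument of this kind rather than an appeal to (c).
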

\begin{proof}

Let $\hat G$ be a representation group associated with $(G,N,\theta)$, 
with associated character $\tau \in \irr{\hat G}$.  Recall that $\tau_N=\theta$.
Let $^*:\irr{G|\theta} \rightarrow \irr{\hat G/N|\hat\lambda}$
be the standard bijection. Let $\pi:\hat G \rightarrow G$
be the homomorphism $(g,z) \mapsto g$.
Since $\pi(N)=N$, let  $\hat \pi:\hat G/N \rightarrow G/N$ be
 the corresponding onto homomorphism. Notice that $\hat G/N$
 is a central extension of $G/N$.

By definition, there is a
Brauer $p$-block
$\hat B$ of $\hat G/N$ such that
$(B_\theta)^*=\irr{\hat B|\hat\lambda}$.
 Recall that $\chi^\pi=\chi^* \tau$ for $\chi \in \irr{G|\theta}$.

Now, fix $\chi\in B_\theta$ and let $B$ be the $p$-block of $G$ containing $\chi$. We claim that $B_\theta\subseteq{\rm Irr}(B|\theta)$. Indeed, let  $\psi\in B_\theta$.
Then $\chi^*,\psi^*\in \hat B$. 
Since $\tau_N=\theta$ and
 ${\chi}^{\pi}=\tau\chi^*$, ${\psi}^{\pi}=\tau\psi^*$, by Lemma \ref{lema}
we have that $\chi^\pi$ and $\psi^\pi$ lie in the same 
$p$-block of $\hat G$. 
By Corollary \ref{sameblock}, $\chi, \psi$ lie in the same $p$-block of $G$.
This proves the first part of  (a).
If $\hat D/N$ is a defect group of $\hat B$, by Lemma \ref{lema}
we have that $\hat D \subseteq EN$ for some defect group
$E$ of the block of $\chi^\pi$. Now, $\pi(E)$ is a defect group of the block of $\chi$
by Corollary \ref{sameblock}(c),
and $\pi(\hat D) \subseteq \pi(E)N$. This proves the second part of (a).
Notice now that if $N$ is central,
then $\tau$ is linear and the defect groups of the block of $\chi^\pi=\tau\chi^*$ are the
the defect groups of the block of $\chi^*$ (multiplying by $\tau^{-1}$ and using Lemma
\ref{lemamub1}).  Since $N$ is central in $\hat G$ by Theorem \ref{teogtilde}(a),
we have that $\hat D=EN$ by Lemma \ref{lemaliftblock}(b).

Next, we complete the
proof of part (b). Suppose $N$ is central  and that $\gamma \in \irr{B|\theta}$.
In particular, $\tau$ is linear.
Write $\gamma^\pi=\gamma^* \tau$, for some $\gamma^* \in \irr{\hat G/N|\hat\lambda}$.
Now, since $\gamma$ and $\chi$ lie in the same $p$-block of $G$,
we have that $\gamma^\pi$ and $\chi^\pi$ lie in the same $p$-block of $\hat G$
by Corollary \ref{sameblock}. Therefore $\gamma^*\tau$ and $\chi^*\tau$
lie in the same $p$-block of $\hat G$. By Lemma \ref{lemamub1}, 
multiplying by $\tau^{-1}$, we have that $\gamma^*$ and $\chi^*$ lie
in the same $p$-block of $\hat G$.
Now, $N \sbs \zent{\hat G}$, by Theorem \ref{teogtilde}(a).
Thus $\gamma^*$ and $\chi^*$ lie
in the same $p$-block of $\hat G/N$ by Lemma \ref{lemaliftblock} (a).
Hence $\gamma^* \in \irr{\hat B|\hat\lambda}$, and therefore $\gamma \in B_\theta$.
This proves (b). (The part on the defect groups 
follows from the previous paragraph.)

For part (c), notice that 
if $\mathcal P$ is a representation affording $\chi$, then 
$\mathcal P$ is a projective representation associated with $(G,N,\theta)$ with trivial
factor set. Hence $\hat G=G$ is a representation group for $(G,N,\theta)$ with
associated character $\tau=\chi$. In this case the standard bijection is the map $\beta\chi\mapsto \beta$ from ${\rm Irr}(G|\theta)\rightarrow {\rm Irr}(G/N)$ given by Gallagher's Corollary 6.18 of \cite{Is}, and part (c) easily follows.

Next we prove part (d). Let $(\hat{G}/N,\hat{N}/N,\hat{\lambda})$ be a standard isomorphic character triple and let $^*:{\rm Irr}(G|\theta)\rightarrow{\rm Irr}(\hat{G}/N|\hat{\lambda})$ be the standard bijection. Let $\hat{B}$ be the $p$-block of $\hat{G}/N$ such that $(B_\theta)^*={\rm Irr}(\hat{B}|\hat{\lambda})$. By Theorem 9.2 and Corollary 9.6 of \cite{nbook} we have that ${\rm Irr}(\hat{B}|\hat{\lambda})={\rm Irr}(\hat{G}/N|\hat{\lambda})$. Therefore $|B_\theta|=|(B_\theta)^*|=|{\rm Irr}(\hat{B}|\hat{\lambda})|=|{\rm Irr}(\hat{G}/N|\hat{\lambda})|=|{\rm Irr}(G|\theta)|$, and the first part of (d) is proved. Let $D_\theta/N$ be a $\theta$-defect group of $B_\theta$ and let $\hat{D}/N\leq \hat{G}/N$ be a defect group of $\hat{B}$ such that $\pi(\hat{D})/N=D_\theta/N$.  Recall that $\hat{\pi}:\hat{G}/N\rightarrow G/N$ defined by $(g,z)N\mapsto gN$ is an onto homomorphism with ${\rm ker}(\hat{\pi})=\hat{N}/N$. Write $G^*=\hat{G}/N$, $N^*=\hat{N}/N$ and $D^*=\hat{D}/N$, and write $\tilde{\pi}:G^*/N^*\rightarrow G/N$ for the induced isomorphism.  Then $\tilde\pi(D^*N^*/N^*)=D_\theta/N$. Let $K^*\in{\rm cl}(G^*)$ be a defect class for $\hat{B}$. By Corollary 3.8 of \cite{nbook}, we know that $K^*$ consists of $p$-regular elements. Since $G^*/N^*$ is a $p$-group, we have that $K^*\subseteq N^*\subseteq{\rm \textbf{Z}}(G^*)$. Let $x^*\in K^*$ be such that $D^*\in{\rm Syl}_p({\rm \textbf{C}}_{G^*}(x^*))$. Since $K^*$ is central, we have that $D^*\in{\rm Syl}_p(G^*)$. Then $D^*N^*/N^*\in{\rm Syl}_p(G^*/N^*)$ and, since $\tilde{\pi}(D^*N^*/N^*)=D_\theta/N$ we have that $D_\theta/N\in{\rm Syl}_p(G/N)$. Since $G/N$ is $p$-group, $D_\theta/N=G/N$. 



\end{proof}

If $(G,N,\theta)$ is a character triple
and $B_\theta \sbs \irr{G|\theta}$ is
a $\theta$-block, then, in general, $B_\theta$ is
much smaller than the set $\irr{B|\theta}$,
where $B$ is the Brauer $p$-block
containing $B_\theta$.
Indeed, if $G$ is a $p$-constrained group, for instance, 
$N \nor G$ 
is such that $p$ does not divide $|G/N|$, and $\theta \in \irr N$ extends to $G$,
then we have that $G$ has only one $p$-block $B$. Thus $\irr{B|\theta}=
\irr{G|\theta}$, while the $\theta$-blocks
have size 1 (by Theorem \ref{properties}(c)).
\medskip

To end this section,
we give an analogue of a classical result on blocks.
  
\begin{thm} Let $\chi\in{\rm Irr}(G|\theta)$ and let $B_\theta$ be the $\theta$-block containing $\chi$. 
Let $g \in G$ and suppose that $(gN)_p$ is not $G/N$-conjugate to any element of $D_\theta/N$, where $D_\theta/N$ is a $\theta$-defect group of $B_\theta$. Then $\chi(g)=0$.
\end{thm}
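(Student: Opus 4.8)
The plan is to transfer the statement to the central extension $\hat G$, where the classical vanishing theorem for Brauer blocks applies, and then push the conclusion back down to $G$ via the standard bijection and the homomorphism $\pi$. First I would fix a representation group $\hat G$ for $(G,N,\theta)$ associated with some projective representation $\mathcal P$, with associated character $\tau \in \irr{\hat G}$ extending $\theta$, factor set values in the central subgroup $Z$, and canonical homomorphism $\pi:\hat G \to G$ with kernel $Z$ and induced map $\hat\pi:\hat G/N \to G/N$. Let $^*:\irr{G|\theta} \to \irr{\hat G/N|\hat\lambda}$ be the standard bijection, so that $\chi^* \in \irr{\hat G/N}$ lies in some Brauer $p$-block $\hat B$ of $\hat G/N$ with $(B_\theta)^* = \irr{\hat B|\hat\lambda}$, and by definition of $\theta$-defect group we may choose a defect group $\hat D/N$ of $\hat B$ with $\pi(\hat D)/N = D_\theta/N$ (up to $G/N$-conjugacy, which is harmless here).

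The key computation is to relate $\chi(g)$ to $\chi^*$ evaluated on a suitable element of $\hat G/N$. Picking any preimage $\hat g = (g,z) \in \hat G$ of $g$, the defining relation $\chi^\pi = \chi^* \tau$ gives $\chi(g) = \chi^\pi(\hat g) = \chi^*(\hat g)\,\tau(\hat g)$, where here $\chi^*(\hat g)$ means the value of $\chi^* \in \irr{\hat G/N}$ at the image $\hat g N \in \hat G/N$. Since $\tau$ is an ordinary irreducible character and $\tau(\hat g)$ is an algebraic number whose vanishing is what we must rule out, I would argue that $\tau(\hat g) \ne 0$ is \emph{not} needed: instead it suffices to show $\chi^*(\hat g N) = 0$, since that forces $\chi(g) = 0$ regardless of $\tau(\hat g)$. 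So the problem reduces to showing that the Brauer-block character $\chi^* \in \irr{\hat B}$ vanishes at the element $\hat g N$ of $\hat G/N$. For this I invoke the classical theorem (Theorem 3.19 / Knörr's refinement, or the version in \cite{nbook}) that an irreducible character in a $p$-block $\hat B$ vanishes on any element whose $p$-part is not conjugate into a defect group of $\hat B$. Thus it remains to check: the $p$-part $(\hat g N)_p$ of $\hat g N$ in $\hat G/N$ is not $\hat G/N$-conjugate to any element of the defect group $\hat D/N$.

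To verify this last point I would use the homomorphism $\hat\pi:\hat G/N \to G/N$. Its kernel is $\hat N/N = NZ/N$, which is a central $p'$-group inside $\hat G/N$ (the $p$-part of $Z$ contributes, but one arranges $Z$ to be a $p'$-group, or more carefully handles the $p$-part — actually $Z$ consists of roots of unity of order dividing $|G|\theta(1)$, so it may have a $p$-part; I would instead note that $\hat\pi$ maps the $p$-part of $\hat g N$ onto the $p$-part of $(gN)$, i.e. $\hat\pi\big((\hat g N)_p\big) = (gN)_p$, since group homomorphisms respect $p$-parts). If $(\hat g N)_p$ were $\hat G/N$-conjugate to an element of $\hat D/N$, applying $\hat\pi$ would make $(gN)_p = \hat\pi((\hat g N)_p)$ conjugate in $G/N$ to an element of $\hat\pi(\hat D/N) = \pi(\hat D)/N = D_\theta/N$, contradicting the hypothesis. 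Hence $(\hat g N)_p$ is not conjugate into $\hat D/N$, the classical vanishing theorem applies to give $\chi^*(\hat g N) = 0$, and therefore $\chi(g) = 0$.

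The main obstacle I anticipate is the bookkeeping around $Z$ having a nontrivial $p$-part, which slightly complicates the claim that $\hat\pi$ sends $p$-parts to $p$-parts \emph{and} that $\hat\pi(\hat D/N)$ is exactly $D_\theta/N$: one should either invoke Theorem 1.9 / Lemma 2.3-style reductions to replace $Z$ by its $p'$-part (since $Z_p$ is central in $\hat G$, blocks and defect groups behave well under factoring it out, cf. Lemma \ref{lemaliftblock}), or simply observe that a group homomorphism always carries $x_p$ to $\hat\pi(x)_p$ so no such reduction is truly necessary. Beyond that, everything is a direct application of the standard bijection's defining property $\chi^\pi = \chi^*\tau$ together with the classical result on vanishing of block characters outside (conjugates of) the defect group, applied inside $\hat G/N$.
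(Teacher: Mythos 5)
Your proposal is correct and follows essentially the same route as the paper: transfer to the representation group via the standard bijection, apply the classical vanishing theorem (Corollary 5.9 of \cite{nbook}) to $\chi^*$ in $\hat G/N$ after checking that $(\hat gN)_p$ is not conjugate into $\hat D/N$ using the homomorphism onto $G/N$, and conclude from $\chi(g)=\tau(\hat g)\chi^*(\hat gN)$. Your observation that one need not worry about whether $\tau(\hat g)$ vanishes, and that homomorphisms carry $p$-parts to $p$-parts (so the $p$-part of $Z$ causes no trouble), matches the paper's argument.
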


\begin{proof}
Let $(\hat{G}/N,\hat{N}/N,\hat\lambda)$ be a standard isomorphic character triple of $(G,N,\theta)$. Write $\pi:\hat{G}\rightarrow G$ for the canonical onto homomorphism. Since $\pi(N)=N$, $\pi$ induces a homomorphism $\hat{\pi}:\hat{G}/N\rightarrow G/N$ with ${\rm ker}(\hat{\pi})=\hat{N}/N$. Write $G^*=\hat{G}/N$ and $N^*=\hat{N}/N$ and write $\tilde{\pi}:G^*/N^*\rightarrow G/N$ for the induced isomorphism.  

Let $gN\in G/N$ and let $g^*N^*\in G^*/N^*$ such that $\tilde{\pi}(g^*N^*)=gN$. Let $\hat{B}$ be the $p$-block of $G^*$ such that $(B_\theta)^*={\rm Irr}(\hat{B}|\hat{\lambda})$, where $^*:{\rm Irr}(G|\theta)\rightarrow{\rm Irr}(G^*|\hat{\lambda})$ is the standard bijection. Let $D^*=\hat{D}/N$ be the defect group of $\hat{B}$ such that $\pi(\hat{D})/N=D_\theta/N$. Notice that $\tilde{\pi}(D^*N^*/N^*)=D_\theta/N$. 

Since $(gN)_p$ is not $G/N$-conjugate to any element of $D_\theta/N$, we have that $(g^*)_pN^*$ is not $G^*/N^*$-conjugate to any element of $D^*N^*/N^*$. Hence $(g^*)_p$ is not contained in any defect group of the block of $\chi^*$. By Corollary 5.9 of \cite{nbook} we have that $\chi^*(g^*)=0$. Recall that $\chi^{\pi}=\tau\chi^*$, where $\tau\in{\rm Irr}(\hat{G})$ is the character associated to $\mathcal{P}$.  Since $\tilde{\pi}(((g,1)N)(\hat{N}/N))=\hat{\pi}((g,1)N)=gN$, we have that $g^*=(g,1)N$ and then $$\chi(g)=\chi^\pi(g,1)=\tau(g,1)\chi^*((g,1)N)=\tau(g,1)\chi^*(g^*)=0.$$



\end{proof}

\section{Theorem D}\label{secthmD}

In this section we prove Theorem D of the introduction. We will need the following result,
which is essentially a result of R. Kn\"orr.
Recall that if $(G,N, \theta)$ is a character triple, then $xN \in G/N$
is {\bf $\theta$-good} if $\theta$ has a $D$-invariant extension
to $N\langle x\rangle$, where $D/N=\cent{G/N}{xN}$.
The $\theta$-good conjugacy classes of $G/N$ (those consisting of $\theta$-good
elements) play the role of the conjugacy classes of $G$ when we are working
with characters of $G$ over $\theta$. For instance, it is a theorem of P. X. Gallagher
 that $|\irr{G|\theta}|$ is the number of
conjugacy classes of $G/N$ consisting of $\theta$-good elements
(see Theorem 5.16 of \cite{nbook2}).

\begin{thm}
Suppose that $Z\subseteq{\rm \textbf{Z}}(G)$ and let $\theta\in{\rm Irr}(Z)$. Suppose that $gZ$ and $hZ$ are not $G/Z$-conjugate. Then 

$$\sum_{\chi\in{\rm Irr}(G|\theta)}\chi(g)\chi(h^{-1})=0.$$ Also $$\sum_{\chi\in{\rm Irr}(G|\theta)}|\chi(g)|^2=|{\bf C}_{G/Z}(gZ)|$$ if $g$ is $\theta$-good.
\label{teoorthogonality}
\end{thm}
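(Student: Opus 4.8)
The plan is to deduce both formulas from the ordinary second orthogonality relation in $G$ by averaging the fixed central character $\theta$ over $Z$. First I would record the elementary fact that, since $Z\subseteq\mathbf{Z}(G)$, each $\chi\in{\rm Irr}(G)$ has a linear ``central character'' $\theta_\chi\in{\rm Irr}(Z)$ with $\chi(gz)=\theta_\chi(z)\chi(g)$ for all $g\in G$, $z\in Z$, and that $\chi\in{\rm Irr}(G|\theta)$ exactly when $\theta_\chi=\theta$. Using the orthogonality $\frac{1}{|Z|}\sum_{z\in Z}\theta(z^{-1})\theta_\chi(z)=\delta_{\theta,\theta_\chi}$ of linear characters of $Z$, this yields, for \emph{arbitrary} $g,h\in G$,
\[
\sum_{\chi\in{\rm Irr}(G|\theta)}\chi(g)\chi(h^{-1})
=\frac{1}{|Z|}\sum_{z\in Z}\theta(z^{-1})\sum_{\chi\in{\rm Irr}(G)}\chi(gz)\chi(h^{-1}),
\]
and by classical column orthogonality the inner sum equals $|\mathbf{C}_G(gz)|=|\mathbf{C}_G(g)|$ if $gz$ is $G$-conjugate to $h$, and $0$ otherwise (note $\mathbf{C}_G(gz)=\mathbf{C}_G(g)$ as $z$ is central).

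For the first assertion: if $gZ$ and $hZ$ are not $G/Z$-conjugate, then no element $gz$ with $z\in Z$ is $G$-conjugate to $h$, since a conjugating $x$ with $(gz)^x=h$ would give $g^xZ=hZ$, i.e. $(gZ)^{xZ}=hZ$. Hence every inner sum above vanishes and the displayed expression is $0$.

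For the second assertion put $h=g$ and set $T=\{z\in Z:gz\sim_G g\}$; by the computation above the sum becomes $\frac{|\mathbf{C}_G(g)|}{|Z|}\sum_{z\in T}\theta(z^{-1})$, so it suffices to prove (i) $|T|\,|\mathbf{C}_G(g)|=|Z|\,|\mathbf{C}_{G/Z}(gZ)|$ and (ii) $\theta$ is trivial on $T$ when $g$ is $\theta$-good, since then $\sum_{z\in T}\theta(z^{-1})=|T|$ and the sum equals $|\mathbf{C}_{G/Z}(gZ)|$. For (i), let $D$ be the full preimage in $G$ of $\mathbf{C}_{G/Z}(gZ)$, so $g^x\in gZ$ for $x\in D$; the map $c\colon D\to Z$, $c(x)=[g,x]=g^{-1}g^x$, is a group homomorphism with kernel $\mathbf{C}_G(g)$ and image $T$ (in particular $T$ is a subgroup), whence $|D|=|\mathbf{C}_G(g)|\,|T|$ and $|\mathbf{C}_{G/Z}(gZ)|=|D|/|Z|$. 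For (ii), observe $Z\langle g\rangle$ is abelian and normal in $D$ by the definition of $D$; if $\hat\theta$ is a $D$-invariant extension of $\theta$ to $Z\langle g\rangle$ (which exists since $g$ is $\theta$-good), then $D$-invariance forces $\hat\theta(g^x)=\hat\theta(g)$ for all $x\in D$, and writing $g^x=g\,c(x)$ with $c(x)\in Z$ and using $\hat\theta|_Z=\theta$ gives $\theta(c(x))=1$; as $x$ ranges over $D$, $c(x)$ ranges over all of $T$, so $\theta|_T=1_T$. The main obstacle here is part (ii): one must identify $T$ precisely as the image of the commutator homomorphism $c$ and correctly convert the $D$-invariance of $\hat\theta$ into the vanishing of $\theta$ on $T$; the rest is the averaging trick together with ordinary orthogonality.
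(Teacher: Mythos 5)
Your proof is correct. Both identities check out: the averaging step
\[
\sum_{\chi\in{\rm Irr}(G|\theta)}\chi(g)\chi(h^{-1})=\frac{1}{|Z|}\sum_{z\in Z}\theta(z^{-1})\sum_{\chi\in{\rm Irr}(G)}\chi(gz)\chi(h^{-1})
\]
is valid because each $\chi$ restricted to the central subgroup $Z$ is $\chi(1)\theta_\chi$ with $\theta_\chi$ linear, the identification of $T=\{z\in Z: gz\sim_G g\}$ with the image of the homomorphism $c(x)=g^{-1}g^x$ on the preimage $D$ of ${\bf C}_{G/Z}(gZ)$ is right (any $x$ conjugating $g$ to $gz$ automatically lies in $D$), and the $\theta$-good hypothesis enters exactly where it must, to force $\theta(c(x))=1$ via the $D$-invariant linear extension $\hat\theta$ on the abelian group $Z\langle g\rangle$. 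Note, though, that this is a genuinely different route from the paper: the paper gives no argument at all, deferring the first identity to Corollary 7 of Kn\"orr's paper on partial inner products (a special case of a much more general bilinear-form statement) and the second to an unpublished result of Isaacs as presented in Theorem 5.21 of Navarro's book. Your direct computation — second orthogonality in $G$ plus averaging over $Z$, together with the commutator homomorphism and the index count $|D|=|{\bf C}_G(g)|\,|T|=|Z|\,|{\bf C}_{G/Z}(gZ)|$ — is entirely self-contained and elementary, and it makes transparent why the first identity needs no goodness hypothesis while the second does (without it the sum is $\frac{|{\bf C}_G(g)|}{|Z|}\sum_{z\in T}\theta(z^{-1})$, which can vanish or be smaller). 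What you lose relative to Kn\"orr's framework is generality: his result applies to noncentral normal subgroups, which your central-character trick does not reach, but that generality is not needed for the statement as given.
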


\begin{proof}
The first part is a special case of Corollary 7 of \cite{knorr}.
The second part is an unpublished result of 
I. M. Isaacs. For a proof see Theorem 5.21 of \cite{nbook2}.
\end{proof}

\medskip

As we said in Section \ref{secreviewpblocks}, we
write $G^{p'}$ for the set of $p$-regular elements of
the finite group $G$. If $\chi\in{\rm Irr}(G)$, we denote by $\chi^{p'}$ the restriction of $\chi$ to $G^{p'}$. We know that we can write

$$\chi^{p'}=\sum_{\varphi\in{\rm IBr}(G)}d_{\chi\varphi}\varphi$$ for uniquely determined non-negative integers $d_{\chi\varphi}$ called the decomposition numbers. The matrix $D=(d_{\chi\varphi})$ is called the decomposition matrix of $G$.
The following is Theorem D of the introduction.

\begin{thm}\label{thmE}
Suppose that $Z$ is a central subgroup of $G$, and let $\theta \in \irr Z$. 
Let $B$ be a Brauer $p$-block of $G$ such that $\irr{B|\theta}$
is not empty. Then the matrix
$D_{B,\theta}=(d_{\chi \varphi})$, where $\chi \in \irr{B|\theta}$ and $\varphi \in \ibr{B}$
is not of the form
$$\left(\begin{array}{cc}
*&0\\
0&*\\
\end{array}\right).$$
\end{thm}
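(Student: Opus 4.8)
The statement says that the decomposition matrix $D_{B,\theta}$, restricted to rows indexed by $\irr{B|\theta}$ and all columns of $\ibr B$, cannot be decomposed into two nonzero diagonal blocks. Equivalently, the \emph{Brauer graph} built on $\irr{B|\theta}$ (join $\chi,\chi'$ if there is some $\varphi\in\ibr B$ with $d_{\chi\varphi}\ne 0$ and $d_{\chi'\varphi}\ne 0$) is connected. The plan is to mimic the classical proof that the ordinary decomposition matrix of a single block is indecomposable, but using the projective orthogonality relations of Theorem~\ref{teoorthogonality} (Kn\"orr/Isaacs) in place of the ordinary column orthogonality relations, and being careful that the relevant ``size'' count over $\theta$ behaves well.

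\textbf{Key steps.} First I would set up the contradiction hypothesis: suppose $\irr{B|\theta} = \Lambda_1 \sqcup \Lambda_2$ with both $\Lambda_i$ nonempty, and $\ibr B = \Phi_1 \sqcup \Phi_2$ (also both nonempty, since each row must be nonzero) such that $d_{\chi\varphi}=0$ whenever $\chi\in\Lambda_i$, $\varphi\in\Phi_j$, $i\ne j$. Second, I would recall that $\theta$ is linear (as $Z$ is central), that the Brauer characters $\ibr G$ restrict consistently to $Z$ — more precisely, $Z$ being a central $p'$-part and $p$-part, every $\varphi\in\ibr B$ determines a linear character $\mu_\varphi$ of $Z^{p'}$, and if $d_{\chi\varphi}\ne 0$ then $\varphi$ lies over $\theta^{p'}=\theta_{Z^{p'}}$; hence I can split $\ibr B$ according to which linear character of $Z^{p'}$ it lies over, and only the part lying over $\theta_{Z^{p'}}$ can contribute to rows in $\irr{B|\theta}$. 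This lets me assume all columns of the submatrix lie over $\theta$ in the $p'$-sense. Third — the crux — I would use the generalized orthogonality: for a $p$-singular element $g\in G$ with $g$ $\theta$-good (one can reduce to such elements, or more directly use the element-evaluation form), the restriction $\chi^{p'}$ is supported on $p$-regular classes, so $d_{\chi\varphi}$ appears via $\chi(g_{p'}) = \sum_\varphi d_{\chi\varphi}\varphi(g_{p'})$ for $g$ $p$-regular. Then for a $p$-regular $g$ I compute
$$\sum_{\chi\in\irr{B|\theta}}\chi(g)\overline{\chi(g)} = \sum_{\varphi,\varphi'\in\ibr B}\Big(\sum_{\chi\in\irr{B|\theta}}d_{\chi\varphi}d_{\chi\varphi'}\Big)\varphi(g)\overline{\varphi'(g)},$$
and observe the matrix $C_B^\theta := \big(\sum_{\chi\in\irr{B|\theta}}d_{\chi\varphi}d_{\chi\varphi'}\big)_{\varphi,\varphi'\in\ibr B}$ — the ``$\theta$-Cartan matrix of $B$'' — would be block-diagonal with respect to $\Phi_1\sqcup\Phi_2$ under the contradiction hypothesis. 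Fourth, I would derive a contradiction from this block-diagonality: the full Cartan matrix $C_B = D_B^{\mathsf T} D_B$ of the block $B$ is indecomposable (classical — blocks of $FG$ are indecomposable algebras), so I need to leverage that $C_B^\theta$ is a ``piece'' of $C_B$ that still sees the full index set $\ibr B$. The cleanest route: show that if $\varphi,\varphi'\in\ibr B$ are \emph{linked} in $C_B$ (i.e. $(C_B)_{\varphi\varphi'}\ne 0$), then they are linked in $C_B^\theta$ too — this would follow if I can show that each $\varphi\in\ibr B$ with $(C_B)_{\varphi\varphi'}\ne 0$ actually has $d_{\chi\varphi}\ne 0$ for some $\chi\in\irr{B|\theta}$, i.e. no column of the $\theta$-submatrix is zero, and moreover that the supports ``overlap''. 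Alternatively, and more robustly, I would argue directly with the projective second orthogonality relation applied to two non-$G/Z$-conjugate central-quotient classes to force columns to be nonzero, together with connectivity of the ordinary Brauer graph of $B$.

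\textbf{Main obstacle.} The delicate point is bridging from ``$C_B^\theta$ is block-diagonal'' to a genuine contradiction: a priori some columns $\varphi$ of the $\theta$-submatrix could be zero, in which case $C_B^\theta$ would trivially decompose without contradicting indecomposability of $C_B$. So the real content is to prove \textbf{every} $\varphi \in \ibr B$ lying over $\theta_{Z^{p'}}$ has $d_{\chi\varphi}\ne 0$ for some $\chi\in\irr{B|\theta}$ — equivalently that the projective modular character table over $\theta$ is ``full'' — and I expect this to be where the hypothesis ``$Z$ central'' is essential (the remark right after Theorem~D warns it fails for $G$-invariant $\theta$ over abelian normal $Z$). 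I would establish it by a counting/rank argument: the number of $\chi\in\irr{G|\theta}$ equals the number of $\theta$-good classes of $G/Z$, the number of $\varphi\in\ibr G$ lying over $\theta_{Z^{p'}}$ equals the number of $\theta$-good $p$-regular classes of $G/Z$, and the $\theta$-decomposition map has full column rank — which for central $Z$ reduces, via passing to the representation group or directly via Theorem~\ref{teoorthogonality}, to the classical statement that $D_G$ has full column rank. Combining full column rank of the $\theta$-block submatrix with connectivity of the ordinary Brauer graph of $B$ then forces connectivity of the $\theta$-Brauer graph, completing the proof. I would expect to spend most of the write-up nailing down this full-rank/non-vanishing claim cleanly, reducing everything else to the cited orthogonality and indecomposability results.
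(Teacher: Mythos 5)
Your overall framework (the Kn\"orr--Isaacs orthogonality relation of Theorem~\ref{teoorthogonality} applied to a ``$\theta$-Cartan matrix'' $C_B^\theta=M_{B,\theta}^{t}M_{B,\theta}$) is the same as the paper's, and you correctly locate the crux: block-diagonality of $C_B^\theta$ must be turned into a contradiction, and for that one needs to control which columns and which links survive the restriction to rows in $\irr{B|\theta}$. But your proposed resolution of that crux does not close. Even if you prove that every column of $M_{B,\theta}$ is nonzero (full column rank) \emph{and} you invoke indecomposability of the full Cartan matrix $C_B$, you cannot conclude that the $\theta$-Brauer graph is connected: a link $(C_B)_{\varphi\varphi'}\neq 0$ is witnessed by some $\chi\in\irr{B}$ with $d_{\chi\varphi}d_{\chi\varphi'}\neq 0$, and that $\chi$ need not lie over $\theta$, so the link may simply disappear in $C_B^\theta$. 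Nonzero columns plus connectivity of the ordinary Brauer graph of $B$ is strictly weaker than what is needed; the phrase ``and moreover that the supports overlap'' is exactly the unproved statement, and your counting/rank argument does not supply it.

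The missing idea is a comparison with the trivial character. After reducing to the case where $Z$ is a central $p$-group (the paper does this by writing $Z=Z_p\times Z_{p'}$ and noting $\irr{B|\theta_{p'}}=\irr{B}$, so only $\theta_p$ matters --- slightly cleaner than your $p'$-splitting of $\ibr B$), one applies Theorem~\ref{teoorthogonality} to \emph{both} $\theta$ and $1_Z$: since every $p$-regular element is $\theta$-good and non-conjugate $p$-regular elements stay non-conjugate modulo the central $p$-group $Z$, one gets
$$Y^{t}M_\theta^{t}M_\theta\overline{Y}=E=Y^{t}M_{1_Z}^{t}M_{1_Z}\overline{Y},$$
with $E$ independent of $\theta$, and cancelling the invertible Brauer character table $Y$ yields the exact identity $M_\theta^{t}M_\theta=M_{1_Z}^{t}M_{1_Z}=C$, the Cartan matrix of $G/Z$. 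Decomposing by blocks then gives $M_{B,\theta}^{t}M_{B,\theta}=C_{\overline{B}}$ for the unique block $\overline{B}$ of $G/Z$ inside $B$, and indecomposability of $C_{\overline{B}}$ (Problem 3.4 of \cite{nbook}) finishes the proof. This identity is strictly stronger than, and is what actually delivers, both of your subgoals (nonzero columns and preservation of links); without it, or some equivalent, your argument has a genuine gap.
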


\begin{proof}

 Let $D=(d_{\chi\varphi})$ be the decomposition matrix of $G$ and let $M_\theta$ be the submatrix of $D$ whose rows are indexed by the characters in ${\rm Irr}(G|\theta)=
 \{\chi_1, \ldots, \chi_k\}$.
 Let $\{x_1,x_2,\ldots,x_l\}$ be a set of representatives of the $p$-regular conjugacy classes of $G$.  Let $X_{\theta}=(\chi_i(x_j))$ be the submatrix of the character table of $G$ with rows indexed by elements in $\irr{G|\theta}$
   and columns indexed by
 the representatives of the
 $p$-regular conjugacy classes of $G$. Let $Y=(\varphi_i(x_j))$ be the Brauer character table of $G$, where $\ibr G=\{ \varphi_1, \ldots, \varphi_l\}$. Then we have that $X_{\theta}=M_\theta Y$.

We first assume that $Z$ is a $p$-group.
Suppose that $g\in G$ is $p$-regular. We claim that $g$ is $\theta$-good. First we prove that ${\rm \textbf{C}}_{G/Z}(gZ)={\rm \textbf{C}}_G(g)/Z$. Indeed, let $xZ\in {\rm \textbf{C}}_{G/Z}(gZ)$. Then, 

$$gZ=(gZ)^{xZ}=g^xZ,$$ and therefore $g^x=gz$ for some $z\in Z$. Since $z$ is a central $p$-element and $g^x$ and $g$ are $p$-regular elements, we have that $z=1$ and therefore $x\in{\rm \textbf{C}}_G(g)$. Now let $\eta$ be an extension of $\theta$ to $\langle Z,g\rangle$. We need to prove that $\eta$ is ${\rm \textbf{C}}_G(g)$-invariant. But this is clear since ${\rm \textbf{C}}_G(g)\subseteq{\rm \textbf{C}}_G(x)$ for all $x\in\langle Z,g\rangle$. Hence $g$ is $\theta$-good and the claim is proven.

Note that if $x_i$ and $x_j$ are not $G$-conjugate $p$-regular elements, then $x_iZ$ and $x_jZ$ are not $G/Z$-conjugate. Indeed, suppose that there exists $gZ\in G/Z$ such that $x_iZ=(x_jZ)^{gZ}=x_j^gZ$, hence $x_i=x_j^gz$ for some $z\in Z$. Again, since $Z$ is a central $p$-group and $x_i$ and $x_j^g$ are $p$-regular elements, we have that $z=1$ and hence $x_i=x_j^g$. 
 Let $E\in{\rm Mat}_{l}(\mathbb{C})$ be the diagonal matrix with diagonal entries $|{\rm \textbf{C}}_{G/Z}(x_iZ)|$. By Theorem \ref{teoorthogonality} we have that 


$$E=X_{\theta}^t\overline{X}_{\theta}=Y^t(M_\theta)^tM_\theta\overline{Y}.$$ 

What we have done until now holds for every $\theta \in \irr Z$.
If $\theta=1_Z$ is the trivial
character of $Z$,
notice that $M_{1_Z}$ is the decomposition matrix of $G/Z$, since ${\rm Irr}(G|1_Z)={\rm Irr}(G/Z)$ and ${\rm IBr}(G/Z)={\rm IBr}(G)$ by Theorem 7.6 of \cite{nbook}.
By the previous equation, for $\theta=1_Z$,
we have

$$E=X_{1_Z}^t\overline{X}_{1_Z}=Y^t(M_{1_Z})^tM_{1_Z}\overline{Y},$$ and since $Y$ is a regular matrix, we conclude that $$C=M_{1_Z}^tM_{1_Z}=M_{\theta}^t M_\theta,$$ where $C$ is the Cartan matrix of $G/Z$.
Until now, our ordering of ${\rm Irr}(G|\theta)=
 \{\chi_1, \ldots, \chi_k\}$ and $\ibr G=\{ \varphi_1, \ldots, \varphi_l\}$ was arbitrary.
 Now let $B_1,B_2,\ldots,B_r$ be the different  $p$-blocks of $G$,
 and order $\irr{G|\theta}$ and $\ibr G$ by blocks
 (so that the first characters are in $B_1$, and so on).
 Since $Z$ is a central $p$-group, by Theorem 7.6 of \cite{nbook} we have that there exists a unique $p$-block $\overline{B}_i$ of $G/Z$ contained in $B_i$. Let $C_{\overline{B}_i}$ be the Cartan matrix of $\overline{B}_i$. We have that $C={\rm diag}(C_{\overline{B}_1},\ldots,C_{\overline{B}_r})$ and $M_\theta={\rm diag}(M_{B_1,\theta},\ldots,M_{B_r,\theta})$ are block diagonal
 matrices. Then,

$$M_\theta^tM_\theta=\diag(M_{B_1,\theta}^tM_{B_1,\theta},\ldots,M_{B_r,\theta}^tM_{B_r,\theta}).$$ 

\medskip

Since $C=M_\theta^tM_\theta$, we 
necessarily have that $C_{\overline{B}_i}=M_{B_i,\theta}^tM_{B_i,\theta}$
for every $i$. Now if  $M_{B, \theta}$ is of the form 


$$\left(\begin{array}{cc}
*&0\\
0&*\\
\end{array}\right),$$ so is $C_{\overline{B}}$. By Problem 3.4 of \cite{nbook} this is a contradiction. 
 
\medskip

This ends the proof of the case where $Z$ is a $p$-group. We prove now the general case. Write $Z=Z_p\times Z_{p'}$, where $Z_p$ is the Sylow $p$-subgroup of $Z$ and write $\theta=\theta_p\times\theta_{p'}$, with $\theta_p\in{\rm Irr}(Z_p)$ and $\theta_{p'}\in{\rm Irr}(Z_{p'})$. By assumption, there is $\chi \in \irr B$ over $\theta$, and therefore over $\theta_{p'}$.
Now, $B$ covers the block $\{\theta_{p'}\}$, and by Theorem 9.2 of \cite{nbook},
we have that $\irr{B|\theta_{p'}}=\irr B$. 
We conclude that $D_{B,\theta}=D_{B,\theta_p}$, and 
we are done by the central $p$-group case.\end{proof}

Notice, for instance, that if $G={\sf A}_4$,
$p=2$, $Z \nor G$ is the Klein subgroup, and $\theta=1_Z$,
then $G$ has a unique 2-block, and the matrix
$D_{B,\theta}$ is the identity.

\section{Theorems B and C}\label{secthmC}

We start this section by proving Theorem C of the introduction.
Recall that Brauer's $k(B)$-conjecture asserts that
if $B$ is a block with defect group $D$, then  $k(B)=|\irr B|\le |D|$.
The key is to use Theorem C of \cite{nlocglo}.

\begin{thm} \label{thmC}The $k(B)$-conjecture is true 
for every finite group if and only if for every
character triple $(G,N,\theta)$, we have that every
$\theta$-block $B_\theta$ has size less than or equal the size of any of its $\theta$-defect groups.
\end{thm}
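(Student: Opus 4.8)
The plan is to prove both implications by relating $\theta$-blocks of $(G,N,\theta)$ to ordinary Brauer blocks of the representation group $\hat G/N$, exactly as in the definition of $\theta$-blocks, and then to invoke Theorem C of \cite{nlocglo} to bridge the gap between the ``relative'' and ``absolute'' statements. For the forward direction, suppose the $k(B)$-conjecture holds for all finite groups. Given a character triple $(G,N,\theta)$, a $\theta$-block $B_\theta$ and a $\theta$-defect group $D_\theta/N$, pick a representation group $\hat G$ for $(G,N,\theta)$ with canonical projection $\pi:\hat G\to G$ and standard bijection ${}^*:\irr{G|\theta}\to\irr{\hat G/N|\hat\lambda}$. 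By definition there is a Brauer $p$-block $\hat B$ of $\hat G/N$ with $(B_\theta)^*=\irr{\hat B|\hat\lambda}$ and a defect group $\hat D/N$ of $\hat B$ with $\pi(\hat D)/N=D_\theta/N$. Then $|B_\theta|=|(B_\theta)^*|=|\irr{\hat B|\hat\lambda}|\le|\irr{\hat B}|=k(\hat B)$, and by the $k(B)$-conjecture applied to $\hat B$ we get $k(\hat B)\le|\hat D/N|$. Finally $\pi$ restricted to $\hat D$ has kernel $Z\cap\hat D$, and since $Z\le\zent{\hat G}$ with $Z\cap N=1$ we in fact need $|\hat D/N|=|\pi(\hat D)/N|=|D_\theta/N|$; the point is that $\hat D$ contains $Z$ (as $Z$ is a central $p$-group inside every defect group, by the usual fact that central $p$-subgroups lie in all defect groups), so $\hat D/N$ and $\pi(\hat D)/N=D_\theta/N$ have the same order. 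Hence $|B_\theta|\le|D_\theta/N|$, as wanted.

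For the converse, assume that every $\theta$-block has size at most that of its $\theta$-defect groups; we must deduce the ordinary $k(B)$-conjecture. The natural move is to apply the hypothesis to the trivial character triple $(G,1,1)$: here the representation group may be taken to be $G$ itself, the standard bijection is the identity, and a $1$-block is just $\irr B$ for a Brauer block $B$, with $1$-defect group a defect group $D$ of $B$. That already gives $k(B)=|\irr B|\le|D|$ directly. However, this only uses a trivial instance, so presumably the real content of Theorem C is that one does \emph{not} need the full family of character triples — or, conversely, that the authors want to phrase the equivalence so that the relative statement for \emph{all} triples is genuinely equivalent, and the work is in the forward direction. So the substantive use of Theorem C of \cite{nlocglo} is likely in the forward direction, to reduce the $k(B)$-conjecture for the auxiliary central extensions $\hat G/N$ to something already known, or to handle the passage between $k(\hat B)$ and the defect group size uniformly; I would read that reference to see whether it lets one bypass assuming the $k(B)$-conjecture for $\hat G/N$ and instead derive it, or whether it supplies the precise inequality $|\irr{\hat B|\hat\lambda}|\le|\hat D/N|$ needed.

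The main obstacle, as I see it, is the order computation $|\hat D/N|=|D_\theta/N|$ and making sure the $\theta$-defect group really does capture $|\hat D/N|$ and not something smaller: one must verify that the central subgroup $Z$ generated by the factor-set values lies inside every defect group $\hat D$ of $\hat B$ (true because $Z$ is a central $p$-subgroup and central $p$-subgroups are contained in all defect groups of all blocks they are ``covered under''), so that $\pi$ induces a bijection $\hat D/N\to\pi(\hat D)/N$ preserving order. A secondary subtlety is independence of choices: since Theorem \ref{thetablockswelldef} already guarantees $\theta$-blocks and $\theta$-defect groups are well defined, the inequality $|B_\theta|\le|D_\theta/N|$ is a meaningful statement independent of the chosen $\mathcal P$, so it suffices to verify it for one convenient representation group, which is what the argument above does. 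With those points in hand, both directions follow, and the only genuinely external input is Theorem C of \cite{nlocglo}, invoked to close the loop.
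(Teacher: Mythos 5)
Your converse direction (take $N=1$, so that a $1$-block is $\irr B$ with $1$-defect group a defect group of $B$) is exactly the paper's. The forward direction, however, contains a genuine gap in the order computation for defect groups. The central kernel $Z$ of $\pi:\hat G\to G$ is generated by the values of the factor set, which are roots of unity of order dividing $|G|\theta(1)$; it is \emph{not} a $p$-group in general, and its Sylow $p$-subgroup $Z_p$ can well be nontrivial (this happens whenever the cohomology class attached to $\theta$ has order divisible by $p$). A defect group $\hat D/N$ of $\hat B$ contains the Sylow $p$-subgroup $NZ_p/N$ of the central subgroup $\hat N/N\cong Z$, so $\hat D\cap Z=Z_p$ and
$$|D_\theta/N|=|\pi(\hat D)/N|=\frac{|\hat D/N|}{|Z_p|}\, ,$$
not $|\hat D/N|$ as you assert. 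Indeed your own observation that $Z\le\hat D$ would force the two orders to \emph{differ} by a factor of $|Z|$, not to agree; equality holds precisely when $\hat D\cap Z=1$. Consequently your chain $|B_\theta|\le k(\hat B)\le|\hat D/N|$ only yields $|B_\theta|\le|D_\theta/N|\cdot|Z_p|$, which is strictly weaker than the desired bound whenever $p$ divides $|Z|$.

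This lost factor of $|Z_p|$ is exactly what Theorem C of \cite{nlocglo} is imported to recover, and you correctly suspected its role without pinning it down. The paper's argument first writes $\hat N/N=N^*_p\times N^*_{p'}$ and $\hat\lambda=\theta^*_p\times\theta^*_{p'}$, and uses Theorem 9.2 of \cite{nbook} to see that $\irr{\hat B|\hat\lambda}=\irr{\hat B|\theta^*_p}$. Then Theorem C of \cite{nlocglo} gives the sharper bound $|\irr{\hat B|\theta^*_p}|\le k(\overline{B})$, where $\overline{B}$ is the unique block of $(\hat G/N)/N^*_p$ contained in $\hat B$; by Theorem 9.10 of \cite{nbook} its defect group is $(\hat D/N)/N^*_p$, whose order is exactly $|D_\theta/N|$. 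The $k(B)$-conjecture is then applied to that quotient group rather than to $\hat G/N$ itself, which closes the gap. Without this step your argument proves only the weakened inequality $|B_\theta|\le|D_\theta/N|\cdot|Z_p|$.
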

\begin{proof} Let $(G,N,\theta)$ be a character triple and let  $(\hat{G}/N,\hat{N}/N,\hat{\lambda})$ be a standard isomorphic character triple. Write $G^*=\hat{G}/N$, $N^*=\hat{N}/N$ and $\theta^*=\hat{\lambda}$. Let $B_\theta$ be a $\theta$-block and let $D_\theta/N$ be a $\theta$-defect group of $B_\theta$. Suppose first that the $k(B)$-conjecture holds for every finite group. Write $N^*=N^*_p\times N^*_{p'}$ where $N^*_p\in{\rm Syl}_p(N^*)$, and write $\theta^*=\theta^*_p\times \theta^*_{p'}$, with $\theta^*_p\in{\rm Irr}(N^*_p)$ and $\theta^*_{p'}\in{\rm Irr}(N^*_{p'})$. From the definition of the $\theta$-blocks, we have that there exists a $p$-block $B^*$ of $G^*$ such that 
$ B_\theta^*= {\rm Irr}(B^*|\theta^*)$, where $^*:\irr{G|\theta}
\rightarrow \irr{G^*|\theta^*}$ is the standard bijection.
Thus $$|B_\theta|=|{\rm Irr}(B^*|\theta^*)|\, .$$
Now by Theorem 9.2 of \cite{nbook}, and using that
${\rm Irr}(B^*|\theta^*) \ne \emptyset$ (by the definition
of $\theta$-blocks), we have that 
 ${\rm Irr}(B^*|\theta^*)=\irr{B^*|\theta^*_{p}}$.
 By Theorem C of \cite{nlocglo} we have that $|{\rm Irr}(B^*|\theta^*_p)|\leq |{\rm Irr}(\overline{B^*})|$, where $\overline{B^*}$ is the unique $p$-block of $G^*/N^*_p$ contained in $B^*$. Let $D^*$ be a defect group of $B^*$. By Theorem 9.10 of \cite{nbook} we have that $D^*/N^*_p$ is a defect group of $\overline{B^*}$.  Since the $k(B)$ conjecture holds for $G^*/N^*_p$ we have that $|{\rm Irr}(\overline{B^*})|\leq |D^*/N^*_p|=|D^*N^*/N^*|$. 
  Now, write $D^*=\hat D/N$, for some
  subgroup $\hat D$ of $\hat G$, and by definition,
  recall that $\pi(\hat D)/N=D_\theta/N$ is a $\theta$-defect group of $B_\theta$,
  where $\pi: \hat G \rightarrow G$ is the onto homomorphism $(g,z) \mapsto g$.
  It is then enough to show that 
  $|D^*N^*/N^*|=|D_\theta/N|$.
  Notice that $\hat{\pi}:G^*\rightarrow G/N$ defined by $(g,z)N\mapsto gN$ is an onto group homomorphism with kernel $N^*$. Write $\tilde{\pi}:G^*/N^*\rightarrow G/N$ for the isomorphism induced by $\hat{\pi}$,
  and notice that $\tilde\pi(D^*N^*/N^*)=\hat\pi(D^*)=\pi(\hat D)/N=D_\theta/N$. Then 
  $D_\theta/N$ and $D^*N^*/N^*$ are isomorphic,
  and  $|D_\theta/N|=|D^*N^*/N^*|$, as desired.

For the converse, simply take $N=1$ and apply Theorem \ref{properties}(c).
\end{proof}

Next we prove Theorem B of the introduction. The key is Theorem 3 of \cite{sambale}. Recall that Conjecture A asserts the following: if $B_\theta \sbs \irr{G|\theta}$ is
a $\theta$-block with $\theta$-defect group $D_\theta/N$ and $\theta$
extends to $D_\theta$, then $(\chi(1)/\theta(1))_p=|G:D_\theta|_p$ for all $\chi \in B_\theta$
if and only if $D_\theta/N$ is abelian.

\begin{thm}
Conjecture A and Brauer's Height Zero conjecture are equivalent.
\end{thm}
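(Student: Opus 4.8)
The plan is to show the two implications separately, using the reduction to the central case plus Sambale's theorem as the bridge. First I would prove that Brauer's Height Zero Conjecture implies Conjecture A. Given a character triple $(G,N,\theta)$, a $\theta$-block $B_\theta$ with $\theta$-defect group $D_\theta/N$, and an extension of $\theta$ to $D_\theta$, I would pass to a standard isomorphic character triple $(\hat G/N, \hat N/N, \hat\lambda)$ via some projective representation associated with $\theta$. Writing $G^*=\hat G/N$, $N^*=\hat N/N$, $\theta^*=\hat\lambda$, the definition of $\theta$-block gives a $p$-block $B^*$ of $G^*$ with $B_\theta^* = \irr{B^*\mid\theta^*}$, and a defect group $\hat D/N = D^*$ of $B^*$ with $\tilde\pi(D^*N^*/N^*)=D_\theta/N$, so that $D_\theta/N \cong D^*N^*/N^*$. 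The key point is that $N^* = \hat N/N$ is \emph{central} in $G^*$ (by Theorem~\ref{teogtilde}(a), $\hat N = N\times Z \subseteq \zent{\hat G}$ when we track through the construction, so $\hat N/N$ is central in $\hat G/N$). Hence the statement to be proven is exactly the \emph{projective Height Zero Conjecture} for the central-by-finite situation $(G^*, N^*, \theta^*)$ with $N^*$ central: the heights of the characters in $\irr{B^*\mid\theta^*}$ are all zero (where height is measured relative to $|G^*:D^*|_p$ and $\theta^*(1)=1$) if and only if $D^*N^*/N^*$ is abelian. By Sambale's Theorem~3 of \cite{sambale}, this projective version is equivalent to the ordinary Height Zero Conjecture, which we are assuming; so Conjecture~A follows. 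I would need to be careful to match conventions: the quantity $(\chi(1)/\theta(1))_p$ for $\chi\in B_\theta$ must equal $(\chi^*(1))_p$ for the corresponding $\chi^*\in\irr{B^*\mid\theta^*}$, which is immediate from $\chi^\pi = \chi^*\tau$ and $\tau(1)=\theta(1)$, and the condition $|G:D_\theta|_p = |G^*:D^*|_p \cdot |N^*|_p$ or similar must be reconciled using $|G^*| = |\hat G|/|N| = |G|\cdot|Z|/|N|$ and $|D^*| = |\hat D|/|N|$ together with $Z\subseteq \hat D$ (since $Z$ is central and contained in a defect group of the block of $\theta^*$, an argument like the one in the proof of Theorem~\ref{properties}(d)).

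For the converse, I would show that Conjecture~A implies the ordinary Height Zero Conjecture, which is the trivial direction: take $N=1$ (so $\theta = 1$ is the trivial character, every projective representation is ordinary with trivial factor set, $\hat G = G$), and apply Theorem~\ref{properties} — in this case a $\theta$-block is simply an ordinary Brauer $p$-block $B$ and its $\theta$-defect group is an ordinary defect group $D$, with $\theta$ trivially extending to $D$. Then Conjecture~A for this triple says precisely that $\chi(1)_p = |G:D|_p$ for all $\chi\in B$ if and only if $D$ is abelian, which is Brauer's Height Zero Conjecture for $B$. Since $B$ was arbitrary, we get the full conjecture.

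The main obstacle I expect is the first direction, and specifically the bookkeeping needed to recognize the $\theta$-block statement as \emph{exactly} the hypothesis of Sambale's projective Height Zero theorem rather than merely something similar to it. There are two subtleties: first, one must confirm that in the standard isomorphic character triple the normal subgroup $N^* = \hat N/N$ is genuinely central in $G^*=\hat G/N$ (not just abelian), since Sambale's and the relevant projective results are stated for central subgroups — as flagged by the remark after Theorem~D, the central hypothesis is essential and cannot be weakened to a $G$-invariant character of an abelian normal subgroup. Tracing Theorem~\ref{teogtilde}(a), $\hat N = N \times Z$ with $Z$ central in $\hat G$, so $\hat N/N \cong Z$ is central in $\hat G/N$, which resolves this. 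Second, one must verify that the defect-group and degree translations are the \emph{correct} ones so that ``$D_\theta/N$ abelian'' corresponds to ``$D^*N^*/N^*$ abelian'' (clear, since these groups are isomorphic) and that the height-zero condition transports faithfully; here the only real work is the computation that $|G:D_\theta|_p$ equals the $p'$-part index appearing in the projective statement for $(G^*,N^*,\theta^*)$, which follows from the order relations above once one knows $Z$ lies in the defect group $\hat D$. Apart from these matching issues, the proof is a direct citation of \cite{sambale} plus Theorem~\ref{properties}, so I would keep it short.
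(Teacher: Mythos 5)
Your proposal is correct and follows essentially the same route as the paper: pass to the standard isomorphic triple $(G^*,N^*,\theta^*)$ with $N^*$ central, match character degrees, indices and defect groups (using that the Sylow $p$-subgroup of $N^*$ lies in $D^*$ so that $|G:D_\theta|_p=|G^*:D^*|_p$), and invoke Sambale's Theorem 3 identifying the projective and ordinary Height Zero Conjectures; the paper packages this as a single equivalence of Conjecture A with the Malle--Navarro projective conjecture, whereas you handle the converse by the $N=1$ specialization, which is the same device the paper uses for Theorem C. The one hypothesis you leave untransported is the extendibility of $\theta$ to $D_\theta$, which must be seen to correspond to extendibility of $\theta^*$ to $D^*N^*$; this follows from standard properties of character triple isomorphisms and is dispatched in one line in the paper.
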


\begin{proof}
Let $(G,N,\theta)$ be a character triple, and let $B_\theta$ be a $\theta$-block
with $\theta$-defect group $D_\theta/N$. As in the proof of Theorem \ref{thmC},
let $(G^*,N^*,\theta^*)$ be a standard isomorphic triple, with standard bijection
$^*: \irr{G|\theta} \rightarrow \irr{G^*|\theta^*}$, and suppose
that $B^*$ is the block of $G^*$ such that $(B_\theta)^*=\irr{B^*|\theta^*}$.
Recall  that $N^* \sbs \zent{G^*}$. We have shown above that $D_\theta/N$
is isomorphic to $D^*N^*/N^*$, where $D^*$ is a defect group of $B^*$.
(In fact, we have shown that if $\tilde\pi:G^*/N^* \rightarrow G/N$
is the group isomorphism induced by $\pi:\hat G \rightarrow G$, then $\tilde\pi(D^*N^*/N^*)=D_\theta/N$.)
Notice that, since $N^*$ is central in $G^*$, we have that the Sylow
$p$-subgroup of $N^*$ is contained in $D^*$ (Theorem 4.8 of \cite{nbook}),
and therefore $|D^*N^*:D^*|_p=1$. Then
$$|G:D_\theta|_p=|G/N:D_\theta/N|_p=|G^*:D^*N^*|_p=|G^*:D^*|_p \, .$$
Now, as is well-known, character triple isomorphisms preserve ratios
of character degrees (see Lemma 11.24 of \cite{Is}), that is $\chi(1)/\theta(1)=\chi^*(1)/\theta^*(1)=\chi^*(1)$
for $\chi \in \irr{G|\theta}$. 
In particular, if $\chi \in B_\theta$,
then
$$(\chi(1)/\theta(1))_p=\chi^*(1)_p=|G^*:D^*|_pp^{h(\chi^*)}=|G:D_\theta|_p p^{h(\chi^*)}\, ,$$
where $0 \le h(\chi^*)$ is the height of $\chi^*$ in $B^*$.

By the properties of
character triple isomorphisms, notice that $\theta$ extends to $D_\theta$
if and only if $\theta^*$ extends to $D^*N^*$. Now, it is clear that Conjecture A and the projective version of the Height Zero conjecture due to G. Malle and G. Navarro (Conjecture A of \cite{mn}) are equivalent. By Theorem 3 of \cite{sambale}, we are done. 
\end{proof}

We end this paper with the following observation.

\begin{pro} Conjecture A implies the Gluck-Wolf-Navarro-Tiep theorem.
\end{pro}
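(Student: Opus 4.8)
The plan is to deduce the Gluck--Wolf--Navarro--Tiep theorem from Conjecture A after reducing to the case of a central normal subgroup. Let $(G,N,\theta)$ be a character triple with $p\nmid\chi(1)/\theta(1)$ for every $\chi\in\irr{G|\theta}$, and pass to a standard isomorphic character triple $(G^*,N^*,\theta^*)=(\hat G/N,\hat N/N,\hat\lambda)$; here $N^*\sbs\zent{G^*}$ and $\theta^*$ is linear (Theorem \ref{teogtilde}), and $G^*/N^*$ is naturally isomorphic to $G/N$. Since character triple isomorphisms preserve ratios of character degrees (Lemma 11.24 of \cite{Is}), the hypothesis translates into $p\nmid\psi(1)$ for every $\psi\in\irr{G^*|\theta^*}$; so it suffices to show that $G^*/N^*$ has abelian Sylow $p$-subgroups.

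Next I would pick any $\chi\in\irr{G^*|\theta^*}$, which is possible because $\theta^*$ is $G^*$-invariant. Since $p\nmid\chi(1)$, the block $B^*$ of $\chi$ has a defect group $D^*$ that is a full Sylow $p$-subgroup of $G^*$. Write $N^*=N^*_p\times N^*_{p'}$ and $\theta^*=\theta^*_p\times\theta^*_{p'}$; as $N^*_p$ is a central $p$-subgroup we have $N^*_p\sbs D^*$. The crucial step --- and, I expect, the main obstacle --- is to check that $\theta^*_p$ extends to $D^*$. If it does not, then every $\eta\in\irr{D^*|\theta^*_p}$ has degree divisible by $p$: its degree is a power of $p$ since $D^*$ is a $p$-group, and it cannot be $1$, for a linear such $\eta$ would restrict to $\theta^*_p$ on $N^*_p$ and hence extend it. But $N^*$ is central, so $\chi_{N^*}=\chi(1)\theta^*$, and therefore $\chi_{D^*}$ is a sum of characters from $\irr{D^*|\theta^*_p}$, which would force $p\mid\chi(1)$ --- a contradiction. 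Hence $\theta^*_p$ extends to $D^*$, and, since $D^*N^*=D^*\times N^*_{p'}$ with $N^*_{p'}$ central, $\theta^*$ extends to $D^*N^*$.

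To finish, note that by Theorem \ref{properties}(b) the set $\irr{B^*|\theta^*}$ is a $\theta^*$-block of $G^*$ whose $\theta^*$-defect group is $D^*N^*/N^*$, and that $D^*N^*/N^*$ is a Sylow $p$-subgroup of $G^*/N^*$, being the image of $D^*\in\Syl_p(G^*)$ under the projection $G^*\to G^*/N^*$; in particular $|G^*:D^*N^*|_p=1$. Now apply Conjecture A to the character triple $(G^*,N^*,\theta^*)$, the $\theta^*$-block $\irr{B^*|\theta^*}$ and its $\theta^*$-defect group $D^*N^*/N^*$: the hypothesis that $\theta^*$ extends to $D^*N^*$ was verified above, and for each $\psi\in\irr{B^*|\theta^*}\sbs\irr{G^*|\theta^*}$ we have $(\psi(1)/\theta^*(1))_p=\psi(1)_p=1=|G^*:D^*N^*|_p$. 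Thus Conjecture A forces $D^*N^*/N^*$ to be abelian, i.e.\ the Sylow $p$-subgroups of $G^*/N^*\cong G/N$ are abelian --- precisely the conclusion of the Gluck--Wolf--Navarro--Tiep theorem.
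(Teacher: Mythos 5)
Your proof is correct. The overall strategy is the same as the paper's (verify that the $\theta$-defect group is a full Sylow $p$-subgroup and that the extension hypothesis of Conjecture A holds, then apply the ``only if'' direction of the conjecture), but the route is genuinely different in two respects. First, you transfer everything to the standard isomorphic central triple $(G^*,N^*,\theta^*)$ and apply Conjecture A there, using $G^*/N^*\cong G/N$ to pull the conclusion back; the paper instead applies Conjecture A directly to $(G,N,\theta)$, quoting the degree formula $(\chi(1)/\theta(1))_p=|G:D_\theta|_p\,p^{h(\chi^*)}$ established in the proof of Theorem B to see that $D_\theta/N$ is Sylow. Your version rederives this fact in $G^*$ from first principles ($p\nmid\chi(1)$ forces the defect group of $\bl(\chi)$ to be Sylow), which makes the argument more self-contained. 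Second, for the extension hypothesis the paper restricts $\chi$ to $D_\theta$ and invokes Isaacs' Corollary 11.29, whereas you split $\theta^*=\theta^*_p\times\theta^*_{p'}$ and give an elementary restriction argument inside the $p$-group $D^*$ (if $\theta^*_p$ did not extend, every constituent of $\chi_{D^*}$ would have degree divisible by $p$); this is essentially a proof of the special case of Corollary 11.29 that is needed. Both approaches are valid; yours trades the citation of two earlier results for a slightly longer but more transparent computation in the central extension.
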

\begin{proof}

Suppose that $p$ does not divide $\chi(1)/\theta(1)$ for all $\chi\in{\rm Irr}(G|\theta)$. Let $B_\theta$ be a $\theta$-block and let $\chi\in B_\theta$. Since $(\chi(1)/\theta(1))_p=|G:D_\theta|_p p^{h(\chi^*)}$, we have that $|G:D_\theta|$
is not divisible by $p$. Hence $D_\theta/N$ is a Sylow
$p$-subgroup of $G/N$. Since $p$ does not divide $\chi(1)/\theta(1)$,
and all the irreducible constituents of $\chi_{D_\theta}$ lie
over $\theta$, it follows that there is some irreducible
constituent $\gamma \in \irr{D_\theta|\theta}$ such that $p$ does
not divide $\gamma(1)/\theta(1)$. By Corollary 11.29 of \cite{Is},
we have that $\gamma_N=\theta$. Since we are assuming that Conjecture A holds, we have that $D_\theta/N$ is abelian. Since $D_\theta/N\in{\rm Syl}_p(G/N)$, we have that $G/N$ has abelian Sylow $p$-subgroups.

\end{proof}
\medskip

\end{document}